\providecommand{\U}[1]{\protect\rule{.1in}{.1in}}
\newtheorem{theorem}{Theorem}[section]
\newtheorem{lemma}[theorem]{Lemma}
\newtheorem{corollary}[theorem]{Corollary}
\newtheorem{proposition}[theorem]{Proposition}
\newtheorem{remark}[theorem]{Remark}
\newtheorem{definition}[theorem]{Definition}
\newtheorem{conjecture}[theorem]{Conjecture}
\def\<{\langle}
\def\>{\rangle}
\def\d{{\rm d}}
\def\div{{\rm div}}
\def\E{\mathbb{E}}
\def\N{\mathbb{N}}
\def\P{\mathbb{P}}
\def\R{\mathbb{R}}
\def\T{\mathbb{T}}
\def\Z{\mathbb{Z}}
\def\Leb{{\rm Leb}}
\begin{document}

\title{$\rho$-white noise solution to 2D stochastic Euler equations}
\author{Franco Flandoli\footnote{Email: flandoli@dma.unipi.it. Dipartimento di Matematica, Universit\`{a} di Pisa, Largo Bruno Pontecorvo 5, Pisa, Italy.} \ and
Dejun Luo\footnote{Email: luodj@amss.ac.cn. RCSDS, Academy of Mathematics and Systems Science, Chinese Academy of Sciences, Beijing 100190, China, and School of Mathematical Sciences, University of the Chinese Academy of Sciences, Beijing 100049, China. }}

\maketitle

\makeatletter
\renewcommand\theequation{\thesection.\arabic{equation}}
\@addtoreset{equation}{section} \makeatother

\begin{abstract}
A stochastic version of 2D Euler equations with transport type
noise in the vorticity is considered, in the framework of Albeverio--Cruzeiro
theory \cite{AC} where the equation is considered with random initial conditions
related to the so called enstrophy measure. The equation is studied by an
approximation scheme based on random point vortices. Stochastic processes
solving the Euler equations are constructed and their density with respect
to the enstrophy measure is proved to satisfy a continuity equation in weak
form. Relevant in comparison with the case without noise is the fact that
here we prove a gradient type estimate for the density. Although we
cannot prove uniqueness for the continuity equation, we discuss how the
gradient type estimate may be related to this open problem.
\end{abstract}

\section{Introduction}

This work is devoted to the investigation of 2D Euler equations with a
Gaussian distributed initial condition and perturbed by multiplicative noise
in transport form. Besides its intrinsic interest as a model of stochastic
fluid mechanics, this topic lies at the intersection of several research lines
of recent interest, a fact that was our main motivation. On one side, relevant
classes of PDEs, of dispersive type, have been solved recently in spaces of
low regularity, replacing arbitrary initial conditions by almost every
initial condition with respect to a suitable measure, see \cite{Tzvetkov} for
a review. Solvability of deterministic equations in infinite dimensional
spaces in a probabilistic sense with respect to initial conditions has also been
approached by means of the associated infinite dimensional continuity
equation, see for instance \cite{Cruz, Bogach, AmbrFigalli, FangLuo, AmbrTrevisan, DFR, KR}. On the other side,
multiplicative transport noise has been proven to regularize certain singular PDEs,
see the review \cite{Flandoli11}; in particular, related to the present work, it
regularizes the dynamics of Euler point vortices, which is well posed in the
deterministic case only for almost every initial configuration with respect to
Lebesgue measure, while it is for all initial conditions when a suitable noise
is added to Euler equations, see \cite{FGP} and \cite{DFV} for a related result on Vlasov--Poisson equations.
That a suitable transport noise regularizes 2D Euler equations is an open
problem, see \cite{FlaCIB}. The approach presented here does not solve this
question yet but poses the basis for further investigations on this
regularization by noise question, due to the gradient type estimates on the
density. In particular, in Theorem \ref{thm-integrability} we
investigate a key property in the direction of uniqueness and, from the
assumptions of that theorem, we identify a new example of transport type
noise, at the border of the regularity class considered in this paper, that
requires to be studied in future researches. Let us now describe in detail the contribution of the present paper
to the previous range of topics.

Let $\T^2=\R^2/ \Z^2$ be the two dimensional torus. The two dimensional Euler equations in vorticity form reads as
  \begin{equation}\label{Euler-vorticity}
  \partial_t\omega_t+ u_t\cdot \nabla \omega_t=0,\quad \omega|_{t=0}= \omega_0,
  \end{equation}
where $u_t=(u^1_t, u^2_t)$ is the divergence free velocity field and $\omega_t =\partial_2 u^1_t- \partial_1 u^2_t$ is the vorticity. We refer the reader to the introduction of \cite{Flandoli} for a list of well posedness results on \eqref{Euler-vorticity} under different regularity assumptions on $\omega_0$.

We consider the equation \eqref{Euler-vorticity} perturbed by random noises:
  \begin{equation}\label{stoch-Euler-vorticity}
  \d \omega_t+ u_t\cdot \nabla \omega_t\, \d t +\sum_{j=1}^\infty \sigma_j\cdot \nabla\omega_t\circ \d W^j_t=0,
  \end{equation}
where $\{\sigma_j: j\in \N\}$ and $\big\{(W^j_t)_{t\geq 0}: j\in \N \big\}$ are, respectively, a family of smooth divergence free vector fields on $\T^2$ and a family of independent real Brownian motions defined on a filtered probability space $(\Theta, \mathcal F, (\mathcal F_t), \P)$. The weak formulation of \eqref{stoch-Euler-vorticity} is
  \begin{equation}\label{weak-Euler-vorticity}
  \d \<\omega_t,\phi\>= \<\omega_t, u_t\cdot \nabla \phi\>\, \d t +\sum_{j=1}^\infty \<\omega_t,\sigma_j\cdot \nabla\phi\> \circ \d W^j_t,
  \end{equation}
where $\phi\in C^\infty (\T^2)$ and $\<\, ,\>$ is the duality between the space $C^\infty (\T^2)'$ of distributions and $C^\infty (\T^2)$. The It\^o form of the above equation is given by
  \begin{equation*}
  \d \<\omega_t,\phi\>= \<\omega_t, u_t\cdot \nabla \phi\>\, \d t +\sum_{j=1}^\infty \<\omega_t,\sigma_j\cdot \nabla\phi\> \,\d W^j_t + \frac12 \sum_{j=1}^\infty \big\<\omega_t,\sigma_j \cdot \nabla (\sigma_j \cdot \nabla \phi) \big\> \,\d t.
  \end{equation*}
This equation can be rewritten in the weak vorticity formulation by using the Biot--Savart kernel $K(x-y)$ on the torus. It is known that (see \cite{Schochet}) $K$ is smooth for $x\neq y$, $K(y-x)= - K(x-y)$ and $|K(x-y)|\leq C/|x-y|$ for $|x-y|$ small enough. By the Biot--Savart law,
  $$u_t(x)= \int_{\T^2} K(x-y)\, \omega_t(\d y).$$
Therefore,
  $$\<\omega_t, u_t\cdot \nabla \phi\>= \int_{\T^2}\int_{\T^2} K(x-y)\cdot \nabla\phi(x)\, \omega_t(\d y)\, \omega_t(\d x).$$
Since $K(y-x)= - K(x-y)$, we can rewrite the above quantity in the symmetric form:
  $$\<\omega_t, u_t\cdot \nabla \phi\>= \int_{\T^2}\int_{\T^2} H_\phi(x,y)\, \omega_t(\d y)\, \omega_t(\d x) = \<\omega_t \otimes \omega_t, H_\phi\>,$$
where
  $$H_\phi(x,y)=\frac12 K(x-y) \cdot (\nabla\phi(x) -\nabla\phi(y)).$$
Now we obtain the weak vorticity formulation of the 2D stochastic Euler equation:
  \begin{equation}\label{weak-Euler-vorticity-1}
  \d \<\omega_t,\phi\>= \<\omega_t \otimes \omega_t, H_\phi\> \, \d t +\sum_{j=1}^\infty \<\omega_t,\sigma_j\cdot \nabla\phi\> \,\d W^j_t + \frac12 \sum_{j=1}^\infty \big\<\omega_t,\sigma_j \cdot \nabla (\sigma_j \cdot \nabla \phi) \big\> \,\d t.
  \end{equation}

We need some notations in order to introduce the notion of solution to \eqref{weak-Euler-vorticity-1}. For any $s\in \R$, we write $H^s(\T^2)$ for the usual Sobolev space on $\T^2$, and $H^{-1-}(\T^2) =\cap_{\delta>0} H^{-1-\delta} (\T^2)$. Let $\omega_{WN}: \Theta\to C^\infty (\T^2)'$ be the white noise on $\T^2$, which is by definition a Gaussian random distribution such that
  $$\E \big[\<\omega_{WN}, \phi\> \<\omega_{WN}, \psi\> \big]= \<\phi, \psi\>, \quad \mbox{for all } \phi, \psi \in C^\infty (\T^2),$$
where $\<\, ,\>$ on the r.h.s. is the inner product in $L^2(\T^2, \d x)$. The law of the white noise $\omega_{WN}$, called the enstrophy measure and denoted by $\mu$, is supported by $H^{-1-}(\T^2)$. It is proven in \cite[Theorem 8]{Flandoli} that, under the probability measure $\mu$, $\<\omega \otimes \omega, H_\phi\>$ is a square integrable r.v. on $H^{-1-}(\T^2)$.

\begin{definition}[$\rho$-white noise solution]\label{def-solution}
Let $\rho: [0,T]\times H^{-1-}(\T^2)\to [0,\infty)$ satisfy $\int \rho_t^q \,\d\mu \leq C$ for some constants $C>0,\, q>1$, and $\int \rho_t \,\d\mu =1$ for every $t\in [0,T]$. We say that a measurable map $\omega: \Theta\times [0,T] \to C^\infty(\T^2)'$, which has trajectories of class $C\big([0,T], H^{-1-}(\T^2) \big)$ and is adapted to $(\mathcal F_t)_{t\geq 0}$, is a $\rho$-white noise solution of the stochastic Euler equations \eqref{stoch-Euler-vorticity} if $\omega_t$ has law $\rho_t \mu$ at every time $t\in [0,T]$, and for every $\phi\in C^\infty(\T^2)$, the following identity holds $\P$-a.s., uniformly in time,
  \begin{equation}\label{def-solution-1}
  \aligned
  \<\omega_t,\phi\> &= \<\omega_0,\phi\> +\int_0^t \<\omega_s\otimes \omega_s, H_\phi\>\, \d s +\sum_{j=1}^\infty \int_0^t \<\omega_s,\sigma_j \cdot \nabla \phi\>\,\d W^j_s\\
  &\hskip13pt + \frac12 \sum_{j=1}^\infty \int_0^t \big\<\omega_s,\sigma_j \cdot \nabla (\sigma_j \cdot \nabla \phi) \big\>\,\d s.
  \endaligned
  \end{equation}
\end{definition}

Before presenting the main results of this paper, we introduce our assumptions on the vector fields $\{\sigma_j:j\in \N\}$:
\begin{itemize}
\item[\textbf{(H1)}] For all $j\in \N$, the vector fields $\sigma_j$ are periodic, smooth and $\div(\sigma_j)=0$.
\item[\textbf{(H2)}] The series $\sum_{j=1}^\infty \|\sigma_j\|_\infty^2$ and $\sum_{j=1}^\infty \|\sigma_j \cdot \nabla \sigma_j \|_\infty$ are convergent.
\end{itemize}

\begin{remark}\label{1-rem-1}
Under the conditions of Definition \ref{def-solution}, we have $\<\omega_s\otimes \omega_s, H_\phi\> \in L^1\big( \Theta, L^1([0,T]) \big)$ by \cite[Theorem 15]{Flandoli}. Moreover, we deduce from {\rm \textbf{(H2)}} that the martingale part in \eqref{def-solution-1} is a square integrable martingale. Indeed, since $\omega_s$ is distributed as $\rho_s \mu$, by H\"older's inequality,
  $$\E \big(\<\omega_s,\sigma_j \cdot \nabla \phi\>^2 \big) = \E_\mu \big(\rho_s\, \<\omega,\sigma_j \cdot \nabla \phi\>^2 \big) \leq \big(\E_\mu \rho_s^q \big)^{1/q} \big(\E_\mu \<\omega,\sigma_j \cdot \nabla \phi\>^{2q'}\big)^{1/q'},$$
where $\E_\mu$ denotes the expectation on $H^{-1-}$ w.r.t. the enstrophy measure $\mu$. Recall that if $\xi\sim N(0,\sigma^2)$, then for any $p>1$, one has $\E (|\xi|^p) \leq C_p\, \sigma^p$ for some constant $C_p>0$. Under $\mu$, $\<\omega,\sigma_j \cdot \nabla \phi\>$ is a centered Gaussian r.v. with variance $\int_{\T^2} |\sigma_j \cdot \nabla \phi|^2 \,\d x \leq \|\sigma_j\|_\infty^2 \|\nabla\phi \|_\infty^2$. Combining these facts with the property of $\rho_s$ yields
  $$\E \big(\<\omega_s,\sigma_j \cdot \nabla \phi\>^2 \big) \leq C^{1/q} C_{q'} \|\sigma_j\|_\infty^2 \|\nabla\phi \|_\infty^2 . $$
This together with {\rm \textbf{(H2)}} gives us
  $$\sum_{j=1}^\infty \int_0^t \E \big(\<\omega_s,\sigma_j \cdot \nabla \phi\>^2 \big) \,\d s \leq C_q t\|\nabla\phi \|_\infty^2 \sum_{j=1}^\infty \|\sigma_j\|_\infty^2 < \infty,$$
which implies the claim. In the same way, one can show that
  $$\sum_{j=1}^\infty \int_0^t \E \big|\big\<\omega_s,\sigma_j \cdot \nabla (\sigma_j \cdot \nabla \phi) \big\> \big|\,\d s \leq C_q t \sum_{j=1}^\infty \big(\|\nabla^2\phi \|_\infty \|\sigma_j\|_\infty^2 + \|\nabla\phi \|_\infty \|\sigma_j \cdot \nabla \sigma_j \|_\infty \big) <\infty. $$
\end{remark}

Now we can present the first main result.

\begin{theorem}[Existence] \label{thm-main-result}
Given $\rho_0 \in C_b \big( H^{-1-}(\T^2)\big)$ such that $\rho_0 \geq 0$ and $\int \rho_0\,\d\mu=1$. Under the assumptions {\rm \textbf{(H1)}} and {\rm \textbf{(H2)}}, there exist a bounded measurable function $\rho:[0,T] \times H^{-1-}(\T^2) \to [0, \|\rho_0\|_\infty]$, and a filtered probability space $(\Theta, \mathcal F, (\mathcal F_t), \P)$ on which there are defined a $(\mathcal F_t)$-adapted process $\omega_\cdot: \Theta \times [0,T]\to C^\infty (\T^2)'$ and a sequence of $(\mathcal F_t)$-Brownian motions $\{(W^j_t)_{t\geq 0}: j\in \N\}$, such that $\omega_\cdot$ is a $\rho$-white noise solution of the stochastic Euler equation \eqref{stoch-Euler-vorticity}.
\end{theorem}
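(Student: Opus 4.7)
The plan is to construct approximate solutions $\omega^N$ via the random point-vortex scheme announced in the abstract, derive a uniform density bound along the vortex flow, extract a tight limit, and identify it as a $\rho$-white noise solution.

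\emph{Approximation.} For each $N$ I would sample $N$ initial vortex positions $X_i^N(0)$ on $\T^2$ together with i.i.d.\ Rademacher signs $\xi_i$, chosen so that the signed empirical measure $\omega_0^N := N^{-1/2}\sum_i \xi_i \delta_{X_i^N(0)}$ has law $\rho_0^N \mu_N$; here $\mu_N$ denotes the law of $N^{-1/2}\sum_i\xi_i\delta_{X_i}$ when the $X_i$ are i.i.d.\ uniform on $\T^2$ and the $\xi_i$ i.i.d.\ Rademacher, a natural approximation of $\mu$ that converges to it as $N\to\infty$, and $\rho_0^N$ is obtained from $\rho_0$ so that $\rho_0^N\to\rho_0$ and $\|\rho_0^N\|_\infty\le\|\rho_0\|_\infty$. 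The positions evolve according to the stochastic point-vortex SDE
\[
\d X_i^N = N^{-1/2}\sum_{j\neq i}\xi_j K(X_i^N-X_j^N)\,\d t + \sum_{k\ge 1}\sigma_k(X_i^N)\circ \d W^k_t,
\]
which is well posed for a.e.\ initial configuration by the regularization-by-noise result of \cite{FGP}. The empirical measure $\omega_t^N := N^{-1/2}\sum_i \xi_i\delta_{X_i^N(t)}$ then satisfies \eqref{weak-Euler-vorticity-1} exactly, the antisymmetric formulation with $H_\phi$ circumventing the diagonal singularity of $K$.

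\emph{Uniform density bound.} The $N$-vortex Biot--Savart drift on $(\T^2)^N$ is divergence-free (a Hamiltonian-structure computation using the antisymmetry of $K$), and each $\sigma_k$ is divergence-free on $\T^2$, so both the drift and every diffusion coefficient are divergence-free on the product manifold. The Stratonovich Liouville theorem then gives that Lebesgue measure on $(\T^2)^N$ is invariant for the stochastic flow; equivalently, the density $\rho_t^N$ of $\omega_t^N$ with respect to $\mu_N$ satisfies $\|\rho_t^N\|_\infty\le\|\rho_0\|_\infty$ uniformly in $t$ and $N$. This is the structural input that will be transferred to the limit.

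\emph{Tightness and identification of the limit.} Writing the It\^o form of the $N$-vortex weak equation and applying the moment estimates of Remark \ref{1-rem-1} uniformly in $N$ (which go through thanks to the density bound) yields Kolmogorov-type equicontinuity of $t\mapsto\langle\omega_t^N,\phi\rangle$; tightness of time marginals in $H^{-1-}(\T^2)$ follows from $\int\|\omega\|_{H^{-1-\delta}}^2\,\d\mu<\infty$ combined with the uniform density bound. A Skorohod representation then produces a.s.\ converging copies $(\tilde\omega^N,\tilde W^{j,N})\to(\omega,\tilde W^j)$. The main obstacle is passing to the limit in $\int_0^t\langle\omega_s^N\otimes\omega_s^N,H_\phi\rangle\,\d s$, since $\langle\omega\otimes\omega,H_\phi\rangle$ is a renormalized random variable not continuous on $H^{-1-}$. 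I would handle this by uniform integrability: the density bound yields, for any $p>1$,
\[
\E\bigl|\langle\omega_s^N\otimes\omega_s^N,H_\phi\rangle\bigr|^p \le \|\rho_0\|_\infty\, \E_\mu\bigl|\langle\omega\otimes\omega,H_\phi\rangle\bigr|^p,
\]
the right-hand side being finite by \cite[Theorem 15]{Flandoli}; combined with a.s.\ convergence of the pairing obtained via mollification of $K$ and control of the commutator, this passes the nonlinearity to the limit. The martingale and Stratonovich-to-It\^o correction terms converge by assumption (H2) and standard martingale arguments. Finally, a weak-$*$ accumulation point $\rho_t$ of $\rho_t^N$ in $L^\infty(\mu)$ (Banach--Alaoglu) provides the density, which inherits $\|\rho_t\|_\infty\le\|\rho_0\|_\infty$ and coincides with the $\mu$-density of the law of $\omega_t$, verifying Definition \ref{def-solution}.
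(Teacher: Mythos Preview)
Your overall architecture matches the paper's: point-vortex approximation, invariance of Lebesgue measure on $(\T^2)^N$ giving a uniform density bound, tightness, Skorohod, and identification of the limit equation. Two points, however, are genuine gaps rather than omissions of detail.

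\textbf{Infinitely many noises in the vortex SDE.} You write $\sum_{k\ge 1}\sigma_k(X_i^N)\circ\d W^k_t$ and invoke \cite{FGP} for well-posedness. That result is for finitely many diffusion coefficients, and under \textbf{(H1)}--\textbf{(H2)} alone (no summable control on $\nabla\sigma_k$) there is no guarantee the infinite-noise SDE on $(\T^2)^N$ has a strong solution, let alone a measure-preserving flow. The paper truncates to exactly $N$ noises in the $N$-vortex system, so that both cutoffs go to infinity together. This is not cosmetic: when identifying the limit equation one must then split the stochastic integral and the Stratonovich correction as $\sum_{j\le J}+\sum_{J<j\le N_k}$ versus the target $\sum_{j\ge 1}$, pass to the limit in the first piece for fixed $J$, and kill the tails \emph{uniformly in $k$} using \textbf{(H2)}. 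Your sketch bypasses this.

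\textbf{The nonlinear term.} Your displayed bound is against the wrong reference measure: the law of $\omega_s^N$ is $\rho_s^N\mu_N^0$, not $\rho_s^N\mu$, so what you actually get is
\[
\E\big|\langle\omega_s^N\otimes\omega_s^N,H_\phi\rangle\big|^2 \le C_N\|\rho_0\|_\infty \int \big|\langle\omega\otimes\omega,H_\phi\rangle\big|^2\,\d\mu_N^0,
\]
and one needs an explicit combinatorial computation of this $\mu_N^0$-second moment (the paper's Lemma~\ref{2-lem-integrability}) to see it is bounded uniformly in $N$ and, crucially, that it tends to zero when $H_\phi$ is replaced by $H_\phi-H_\phi^\delta$. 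More importantly, uniform integrability alone does not yield convergence of $\langle\omega_s^N\otimes\omega_s^N,H_\phi\rangle$, because $\omega\mapsto\langle\omega\otimes\omega,H_\phi\rangle$ is only an $L^2(\mu)$-limit, not a continuous functional on $H^{-1-}$; a.s.\ convergence of $\omega^N_s$ says nothing directly. The paper's mechanism is the one you gesture at with ``mollification of $K$'': replace $H_\phi$ by a smooth $H_\phi^\delta$ so that the pairing becomes continuous and passes to the limit by Skorohod, then control $\E|\langle\omega_s^N\otimes\omega_s^N,H_\phi-H_\phi^\delta\rangle|$ uniformly in $N$ via the $\mu_N^0$-moment formula and $\E|\langle\omega_s\otimes\omega_s,H_\phi-H_\phi^\delta\rangle|$ via the density bound against $\mu$, and send $\delta\to 0$ last. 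There is no commutator here; the phrase suggests a different (DiPerna--Lions type) argument that does not apply.
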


Our next result is concerned with the regularity properties of the density $\rho_t$, for which we need some more notations. Given two elements $\omega,\eta\in C^{\infty} ( \T^2)^{\prime}$ and a function $G:C^{\infty} (  \mathbb{T}^{2})^{\prime} \to\mathbb{R}$, we write $\langle \eta,D_{\omega} G (\omega) \rangle $ for the limit
  \[
  \left\langle \eta,D_{\omega}G\left( \omega\right)  \right\rangle
  =\lim_{\varepsilon\to 0}\frac{G( \omega+\varepsilon \eta) -G( \omega) }{\varepsilon}
  \]
when it exists. For instance, if $G$ is taken from
  $$\aligned
  \mathcal{FC}_P= \big\{G:C^{\infty} (\T^{2})^{\prime} \to\mathbb{R}\, \big| & G(\omega)=g( \langle \omega,\phi_{1}\rangle ,\ldots ,\langle \omega,\phi_{n}\rangle ) \mbox{ for some } n\in \N\\
  & \mbox{and } g\in C_P^\infty(\R^n),\phi_{1},\ldots ,\phi_{n}\in C^\infty(\T^2) \big\},
  \endaligned$$
where $C_P^\infty(\R^n)$ is the space of smooth functions on $\R^n$ having polynomial growth together with all the derivatives, then
  \[
  \langle \eta,D_{\omega} G (\omega) \rangle =\sum_{j=1}^{n}\partial_{j} g(\langle \omega, \phi_{1}\rangle ,\ldots ,\langle \omega,\phi_{n}\rangle ) \langle \eta, \phi_{j}\rangle .
  \]
We will also write $D_{\omega} G (\omega)= \sum_{j=1}^{n}\partial_{j} g(\langle \omega,\phi_{1}\rangle ,\ldots ,\langle \omega,\phi_{n}\rangle ) \, \phi_{j}$.

For our purpose, we shall need test functions which depend on time. Hence we denote by
  $$\aligned
  \mathcal{FC}_{P,T}= \bigg\{F:[0,T]\times C^{\infty} (\T^{2})^{\prime} \to\R \, \Big| & F(t,\omega) =\sum_{i=1}^m g_i(t) f_i(\omega) \mbox{ for some } m\in \N \\
  &\mbox{and } g_i\in C^1([0,T]),\, f_i\in \mathcal{FC}_P,\, 1\leq i\leq m\bigg\}.
  \endaligned$$
For $F\in \mathcal{FC}_{P,T}$ given by $F(t,\omega) =\sum_{i=1}^m g_i(t) f_i(\<\omega, \phi_1\>, \ldots , \<\omega, \phi_n\>) $, we have
  $$D_\omega F(t,\omega)= \sum_{i=1}^m g_i(t)\sum_{j=1}^n \partial_j f_i(\<\omega, \phi_1\>, \ldots , \<\omega, \phi_n\>)\, \phi_j.$$
Set
  $$\<b(\omega), D_\omega F(t,\omega)\>:= \sum_{i=1}^m g_i(t)\sum_{j=1}^n \partial_j f_i(\<\omega, \phi_1\>, \ldots , \<\omega, \phi_n\>) \big\< \omega\otimes \omega, H_{\phi_j}\big\>,$$
where $\big\< \omega\otimes \omega, H_{\phi_j}\big\>,\, j=1,\ldots, n$, are limits of Cauchy sequences in $L^2\big(H^{-1-}(\T^2), \mu\big)$ (see \cite[Theorem 8]{Flandoli}). Hence $\<b(\omega), D_\omega F(t,\omega)\>$ belongs to $C\big( [0,T], L^r\big(H^{-1-}(\T^2), \mu\big) \big)$ for all $r\in [1,2)$.

\begin{theorem}[Regularity] \label{thm-main-result-2}
Let $\rho:[0,T] \times H^{-1-}(\T^2) \to [0, \|\rho_0\|_\infty]$ be the density function given in Theorem \ref{thm-main-result}.
\begin{itemize}
\item[\rm (i)] For any $F\in \mathcal{FC}_{P,T}$ with $F(T,\cdot)=0$, the function $\rho$ satisfies
  \begin{equation}\label{thm-main-result-2.1}
  \aligned
  0&= \int F(0,\omega) \rho_0(\omega)\mu(\d\omega) + \int_0^T \!\int \big[(\partial_t F)(t, \omega) + \<b(\omega), D_\omega F(t,\omega)\> \big] \rho_t(\omega) \mu(\d\omega) \d t \\
  &\hskip13pt + \frac12 \sum_{k=1}^\infty \int_0^T \!\int \big\< \sigma_k \cdot \nabla \omega, D_\omega \<\sigma_k \cdot \nabla \omega, D_\omega F(t,\omega)\> \big\> \rho_t(\omega) \mu(\d\omega) \d t.
  \endaligned
  \end{equation}

\item[\rm (ii)] For every $k\in \N$, $\big\< \sigma_k\cdot \nabla \omega, D_\omega \rho_t(\omega)\big\>$ exists in the distributional sense and the gradient estimate holds:
  \begin{equation}\label{thm-main-result-2.2}
  \sum_{k=1}^\infty \int_0^T\!\int \big\< \sigma_k\cdot \nabla \omega, D_\omega \rho_t(\omega)\big\>^2 \, \mu( \d\omega) \d t \leq \|\rho_0\|_\infty^2.
  \end{equation}
\end{itemize}
\end{theorem}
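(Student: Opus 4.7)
The plan is to apply It\^o's formula to $F(t, \omega_t)$ with $\omega_t$ the $\rho$-white noise solution from Theorem \ref{thm-main-result} and $F \in \mathcal{FC}_{P,T}$. Since any such $F$ is a finite sum of products $g_i(t)\, f_i(\<\omega,\phi_1\>, \dots, \<\omega,\phi_n\>)$, applying the chain rule to the finitely many real-valued semimartingales $\<\omega_t,\phi_j\>$ that satisfy \eqref{def-solution-1} will produce four contributions: $\partial_t F$, the Euler drift $\<b(\omega_t), D_\omega F(t,\omega_t)\>$, stochastic integrals against the $W^k$, and the quadratic-variation correction
$$\tfrac{1}{2}\sum_{k=1}^\infty \bigl\< \sigma_k \cdot \nabla \omega_t,\, D_\omega \<\sigma_k \cdot \nabla \omega_t,\, D_\omega F(t,\omega_t)\>\bigr\>.$$
Imposing $F(T,\cdot)=0$ and taking $\P$-expectation (the martingale vanishes by Remark \ref{1-rem-1}), then rewriting each expectation as an integral against $\rho_t\mu$, will yield \eqref{thm-main-result-2.1}. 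Integrability of every term follows from \textbf{(H1)}, \textbf{(H2)}, the $L^\infty$ bound on $\rho$, and the $L^2(\mu)$-integrability of $\<\omega\otimes\omega, H_\phi\>$ from \cite[Theorem 8]{Flandoli}.

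\textbf{Part (ii), energy estimate.} The plan is to transport the calculation to the approximation scheme $\omega^N_t$ used in the proof of Theorem \ref{thm-main-result}, where the time-$t$ density $\rho^N_t$ is smooth enough for the directional derivative $X_k F := \<\sigma_k \cdot \nabla \omega, D_\omega F\>$ to be a classical function. At this level $\rho^N_t$ satisfies (in the appropriate finite-dimensional sense) the Fokker--Planck equation adjoint to \eqref{thm-main-result-2.1}. Multiplying by $\rho^N_t$ and integrating against $\mu$, the Euler drift will contribute zero because it preserves $\mu$---the Albeverio--Cruzeiro invariance of the enstrophy measure, rooted in the antisymmetry of $H_\phi$ under $x \leftrightarrow y$. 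For the second-order term, Gaussian integration by parts gives $\int \rho^N_t X_k^2 \rho^N_t\, \d\mu = -\|X_k \rho^N_t\|_{L^2(\mu)}^2$, with no trace correction because $X_k$ is the derivation along the antisymmetric linear map $\omega\mapsto \sigma_k \cdot \nabla \omega$ and hence skew-adjoint on $L^2(\mu)$ (the Skorokhod divergence of $\omega\mapsto \sigma_k \cdot \nabla \omega$ vanishes thanks to $\div \sigma_k = 0$). Integrating in time will produce
$$\int (\rho^N_T)^2 \d\mu + \sum_{k=1}^\infty \int_0^T\!\!\int \<\sigma_k \cdot \nabla \omega, D_\omega \rho^N_t\>^2 \d\mu\, \d t = \int (\rho^N_0)^2 \d\mu \leq \|\rho_0\|_\infty^2.$$

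\textbf{Passage to the limit and main obstacle.} The energy estimate makes $h^{N,k}_t := \<\sigma_k \cdot \nabla \omega, D_\omega \rho^N_t\>$ uniformly bounded in $\ell^2 \otimes L^2(\d t\otimes\mu)$. I would extract a weakly convergent subsequence with limit $(h^k_t)_k$, whose norm inherits the bound $\|\rho_0\|_\infty$ by lower semicontinuity, and identify $h^k_t$ with $\<\sigma_k \cdot \nabla \omega, D_\omega \rho_t\>$ in the distributional sense by passing to the limit in
$$\int h^{N,k}_t\, G\, \d\mu = -\int \rho^N_t\, \<\sigma_k \cdot \nabla \omega, D_\omega G\>\, \d\mu, \qquad G \in \mathcal{FC}_P,$$
using the weak convergence of $h^{N,k}_t$ on the left and the weak-$\ast$ convergence $\rho^N_t \to \rho_t$ in $L^\infty(\mu)$ (from Theorem \ref{thm-main-result}) on the right. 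The hardest part will be securing the invariance of the enstrophy measure under the truncated Euler drift at the approximation level---this exact cancellation of the first-order term in the energy identity is what ultimately delivers the gradient bound---and, related to it, the uniform compatibility of the Gaussian integration by parts along the approximation; both rest essentially on the divergence-free structure of $\sigma_k$ and on the $H_\phi$-antisymmetry.
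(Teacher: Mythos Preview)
Your plan is correct and matches the paper's proof exactly.

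\textbf{Part (ii).} Your overall architecture---prove an energy identity at the approximation level and pass to the limit by weak compactness---is the paper's as well, but the level at which you propose to run the energy computation is too high, and this is a genuine gap.

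You assume that the approximate density $\rho^N_t$ on $H^{-1-}$ is ``smooth enough'' to test the Fokker--Planck equation against itself and integrate by parts. It is not: $\rho^N_t$ is a density with respect to $\mu_N^0$, supported on the $N$-point measures $\mathcal M_N(\T^2)$, and even pulled back to $(\T^2)^N$ via $\mathcal T_N$ the density is only known to be bounded, because the Biot--Savart kernel $K$ is singular. The paper therefore inserts an additional regularization layer: it first replaces $K$ by the smooth $K^\delta$ and works not on $H^{-1-}$ but on the \emph{particle space} $(\T^2)^N$ with Lebesgue measure $\lambda_N$. There the density $u^\delta_t(x)=\E\,v_0\bigl(X^{\delta,-1}_t(x)\bigr)$ is genuinely $C^\infty$, and the energy computation is the classical one: $\int u^\delta_t\,(A_0^\delta\!\cdot\!\nabla u^\delta_t)\,\d x=0$ because $\div_{2N}(A_0^\delta)=0$, and $\int u^\delta_t\, A_k\!\cdot\!\nabla(A_k\!\cdot\!\nabla u^\delta_t)\,\d x=-\|A_k\!\cdot\!\nabla u^\delta_t\|_{L^2}^2$ by ordinary integration by parts. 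No Albeverio--Cruzeiro invariance of $\mu$ and no Gaussian (Skorokhod) calculus is invoked; the cancellation is the elementary divergence-free property of the finite-dimensional vector fields. One then lets $\delta\to 0$ (using a.s.\ convergence of the flows), lifts to random intensities and to $\rho^N_t$ on $\mathcal M_N$ via conditional expectation, and only at the very end lets $N\to\infty$.

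A second, related issue is your limit step: you write ``weak-$\ast$ convergence $\rho^N_t\to\rho_t$ in $L^\infty(\mu)$'', but $\rho^N_t$ is a density with respect to $\mu_N^0$, which is singular to $\mu$. The paper handles this by passing to the limit in the integration-by-parts identity $\int \langle\sigma_k\!\cdot\!\nabla\omega,D_\omega\rho^N_t\rangle\,f\,\d\mu_N^0=-\int \rho^N_t\,\langle\sigma_k\!\cdot\!\nabla\omega,D_\omega f\rangle\,\d\mu_N^0$: the right-hand side converges because $\rho^N_t\,\mu_N^0$ is the law of $\omega^N_t$, which converges in distribution to the law $\rho_t\,\mu$ of $\omega_t$; the left-hand side is controlled by the uniform gradient bound, and a Riesz-representation argument identifies the limit as the distributional derivative. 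So the convergence used is weak convergence of \emph{measures}, not weak-$\ast$ in $L^\infty(\mu)$.
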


\begin{remark}\label{1-rem}
\begin{itemize}
\item[\rm(1)] We briefly explain the meaning of the second order term in \eqref{thm-main-result-2.1}. The distribution $\sigma_{k} \cdot \nabla\omega$ is understood as follows:
  $$\left\langle \sigma_{k}\cdot\nabla\omega, \phi\right\rangle :=-\left\langle \omega,\sigma_{k}\cdot\nabla\phi \right\rangle,\quad \phi\in C^\infty ( \T^2),$$
since we assume $\sigma_{k}$ is smooth and divergence free. Given $G\in \mathcal{FC}_P$ of the form $G( \omega)= g(\<\omega, \phi_1\>, \ldots , \<\omega, \phi_n\>)$, we consider the new functional $H:C^{\infty}( \mathbb{T}^{2} )^{\prime} \to \R$ defined by
  $$H( \omega) = \langle \sigma_{k} \cdot \nabla\omega, D_{\omega} G( \omega) \rangle = - \sum_{j=1}^n \partial_j g(\<\omega, \phi_1\>, \ldots , \<\omega, \phi_n\>) \<\omega, \sigma_k \cdot \nabla \phi_j\>.$$
Then $H\in \mathcal{FC}_P$. In Lemma \ref{sec-4.1-lem} below we compute explicitly the term $\langle \sigma_{k}\cdot\nabla \omega,D_{\omega} H( \omega) \rangle $ (see also Remark \ref{sec-4.1-rem}).

\item[\rm(2)] We explain here what we mean by $\big\< \sigma_k\cdot \nabla \omega, D_\omega \rho_t(\omega)\big\>$ exists in the distributional sense for all $k\in \N$. It comes from the equality \eqref{eq-17} which looks like an integration by parts formula. Thanks to \eqref{eq-17} and the fact that $\div_\mu (\sigma_k\cdot \nabla \omega)=0$ (see Lemma \ref{lem-divergence}), it is natural to define $\big\< \sigma_k\cdot \nabla \omega, D_\omega \rho_t(\omega)\big\>= G_k(t, \omega)$ with some $G\in L^2 \big(\N\times [0,T] \times H^{-1-}, \# \otimes \d t \otimes \mu\big)$, where $\#$ is the counting measure on the set $\N$ of natural numbers.
\end{itemize}
\end{remark}

At the heuristic level, the gradient estimate \eqref{thm-main-result-2.2} can be guessed by an energy-type computation on $\rho_t$, using skew-symmetry with respect to $\mu$ of certain differential operators. However, energy-type computations cannot be performed rigorously on weak solutions satisfying \eqref{thm-main-result-2.1}. Our strategy will be to prove a gradient estimate for the density associated to the point vortex approximation and then pass to the limit.

With the gradient estimate \eqref{thm-main-result-2.2} in hand, it is tempting to prove the uniqueness of the equation \eqref{thm-main-result-2.1}. It turns out that a key property, to prove an uniqueness claim, is to have that the function $\left\langle b\left(  \omega\right)  ,D_{\omega
}\rho_{t}\right\rangle $ should be well defined in a suitable sense and
integrable. After some formal calculations, we find that the drift term $\<b(\omega), D_\omega \rho_t\>$ can be expressed as
  $$\<b(\omega), D_\omega \rho_t\>= \sum_{k=1}^\infty \frac{\<\omega, \sigma_k \ast K\> } {\|\sigma_k\|_{L^2}^2} \<\sigma_k \cdot \nabla \omega, D_\omega \rho_t\>,$$
where $(\sigma_k \ast K)(x)= \int_{\T^2} \sigma_k(x-y) \cdot K(y)\,\d y$ is a smooth function on $\T^2$. Consider the following family of vector fields: for $\gamma\geq 2$,
  \begin{equation}\label{vector-fields}
  \sigma_k(x)= {\rm e}^{2\pi {\rm i} k\cdot x}\frac{k^\perp}{|k|^\gamma}, \quad  x\in \T^2,\, k\in \Z^2_0:= \Z^2\setminus \{0\}.
  \end{equation}
If $\gamma >2$, since $\sigma_k \cdot \nabla \sigma_k = 0$, it is obvious that these vector fields satisfy our assumptions \textbf{(H1)} and \textbf{(H2)}. Using the Fourier expansion of $K$, one has $ (\sigma_k \ast K)(x)= 2\pi {\rm i}\, {\rm e}^{2\pi {\rm i} k\cdot x}/ |k|^\gamma$. Therefore,
  $$\<b(\omega), D_\omega \rho_t\>= 2\pi {\rm i} \sum_{k\in \Z^2_0} |k|^{\gamma -2} \big\<\omega, {\rm e}^{2\pi {\rm i} k\cdot x} \big\>  \big\<\sigma_k \cdot \nabla \omega, D_\omega \rho_t \big\>.$$
A first thing is to know in which sense the above series is convergent. We shall prove

\begin{theorem}\label{thm-integrability}
Assume that the gradient estimate \eqref{thm-main-result-2.2} holds in the case $\gamma =2$. Then the series
  $$\<b(\omega), D_\omega \rho_t\>= 2\pi {\rm i} \sum_{k\in \Z^2_0} \big\<\omega, {\rm e}^{2\pi {\rm i} k\cdot x} \big\>  \big\<\sigma_k \cdot \nabla \omega, D_\omega \rho_t \big\>$$
converge in $L^2\big([0,T] \times H^{-1-}(\T^2), \d t \otimes \mu\big)$.
\end{theorem}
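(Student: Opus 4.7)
Let me write $\xi_k(\omega):=\<\omega,\mathrm{e}^{2\pi\mathrm{i}k\cdot x}\>$ and $G_k(t,\omega):=\<\sigma_k\cdot\nabla\omega,D_\omega\rho_t\>$ for $k\in\Z^2_0$, so the hypothesized gradient estimate reads $\sum_k\int_0^T\!\int|G_k|^2\,\d\mu\,\d t\le\|\rho_0\|_\infty^2$. My goal is to prove that the partial sums $S_N:=2\pi\mathrm{i}\sum_{0<|k|\le N}\xi_k G_k$ form a Cauchy sequence in $L^2([0,T]\times H^{-1-}(\T^2),\d t\otimes\mu)$. Expanding the square of the difference,
$$
\int_0^T\!\!\int|S_N-S_M|^2\,\d\mu\,\d t=4\pi^2\!\!\sum_{k,k'\in I_{M,N}}\!\!\int_0^T\!\!\int \xi_k\bar\xi_{k'}G_k\bar G_{k'}\,\d\mu\,\d t,
$$
where $I_{M,N}:=\{k\in\Z^2_0:M<|k|\le N\}$, and it suffices to show this double sum tends to $0$ as $M\to\infty$.

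The central tool will be the integration-by-parts identity implicit in Remark~\ref{1-rem}(2), namely $\int G_k F\,\d\mu=-\int\rho_t\<\sigma_k\cdot\nabla\omega,D_\omega F\>\,\d\mu$ for cylindrical $F$, together with the algebraic cancellation
$$
\<\sigma_k\cdot\nabla\omega,\mathrm{e}^{2\pi\mathrm{i}k\cdot x}\>=-\<\omega,\sigma_k\cdot\nabla\mathrm{e}^{2\pi\mathrm{i}k\cdot x}\>=-2\pi\mathrm{i}\,\tfrac{k^\perp\cdot k}{|k|^2}\,\<\omega,\mathrm{e}^{4\pi\mathrm{i}k\cdot x}\>=0,
$$
which annihilates the diagonal boundary term. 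The plan is to apply the IBP to each cross term with $F=\xi_k\bar\xi_{k'}\bar G_{k'}$ and distribute $D_\omega$ by the product rule: the contribution in which $D_\omega$ hits $\xi_k$ vanishes by the cancellation above; the contribution in which $D_\omega$ hits $\bar\xi_{k'}$ produces the shift factor $\<\sigma_k\cdot\nabla\omega,\overline{\mathrm{e}^{2\pi\mathrm{i}k'\cdot x}}\>=2\pi\mathrm{i}\,\tfrac{k^\perp\cdot k'}{|k|^2}\,\xi_{k-k'}$; and the remainder involves $\<\sigma_k\cdot\nabla\omega,D_\omega\bar G_{k'}\>$, which I would treat by iterating IBP, this time against $\bar G_{k'}$ via the analogous identity for $k'$. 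Each application extracts a factor of $\rho_t$ which is dominated by $\|\rho_0\|_\infty$, so after two iterations everything reduces to a multilinear form in the Fourier modes $\xi_{\cdot}$ of $\omega$ with coefficients built from the kernel $(k^\perp\cdot k')/(|k|^2|k'|^2)$ and the still-unknown factors $G_k,G_{k'}$.

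The main obstacle is controlling the resulting multilinear sum. A naive Cauchy-Schwarz fails because $\sum_k|\xi_k|^2=+\infty$ $\mu$-a.s.\ for white noise; one is forced to use both the $\ell^2$ summability of $(G_k)$ and the borderline $|k|^{-2}$ decay of $\sigma_k\ast K$ specific to the scaling $\gamma=2$. The plan here is to apply a weighted Cauchy-Schwarz in $(k,k')$ with weight $|k^\perp\cdot k'|^2/(|k|^4|k'|^4)$, use Gaussian moment bounds on the shifted modes $\xi_{k-k'}$ which are uniform in $k,k'$, and recognize the kernel sum as a discrete Schur-type multiplier that can be absorbed by one copy of the gradient estimate, producing an estimate of the form
$$
\int_0^T\!\!\int|S_N-S_M|^2\,\d\mu\,\d t\;\lesssim\;\|\rho_0\|_\infty^2\sum_{k'\in I_{M,N}}\|G_{k'}\|_{L^2}^2,
$$
whose right-hand side vanishes as $M\to\infty$. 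The delicacy is that at $\gamma=2$ the kernel lies exactly at the threshold of $\ell^2$-summability: this is what makes the argument work and simultaneously explains the paper's remark that this scaling is ``at the border of the regularity class''.
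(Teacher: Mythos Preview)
Your proposal has a genuine gap at the very first move. The integration-by-parts identity that defines $G_k$ in the distributional sense (Remark~\ref{1-rem}(2), or equation~\eqref{eq-17}) is valid only against cylindrical test functions $F\in\mathcal{FC}_{P,T}$. Your intended test function $F=\xi_k\bar\xi_{k'}\bar G_{k'}$ is \emph{not} cylindrical: $G_{k'}$ is itself only an $L^2$ object produced by the weak gradient estimate, and there is no reason it should lie in, or be approximable in a topology compatible with IBP by, functions of $\mathcal{FC}_P$. Consequently the product-rule expansion is unjustified, and the expression $\langle\sigma_k\cdot\nabla\omega,D_\omega\bar G_{k'}\rangle$ that you propose to ``iterate IBP'' on has no a priori meaning---the hypothesis gives only one weak derivative of $\rho_t$, not two. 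The remainder of your sketch (weighted Cauchy--Schwarz, Schur multiplier) is too schematic to assess, but it rests on this first step.

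The paper avoids this entirely by exploiting the Gaussian structure of $\mu$ rather than iterating IBP. The key observations are: (a) the family $\{\eta_k\}_{k\in\Z^2_0}$ (your $\xi_k$) is an orthonormal basis of $L^2_0(H^{-1-},\mu)$ consisting of i.i.d.\ standard Gaussians; (b) the algebraic cancellation you noted shows $\langle G_k,\eta_k\rangle_{L^2(\mu)}=0$; (c) each $G_k$ is then expanded in this basis, $G_k=\sum_l c_{k,l}\eta_l$ with $c_{k,k}=0$. Plugging this into $\sum_k\eta_k G_k$ and expanding $|J_M-J_N|^2$ reduces to controlling fourfold Gaussian moments $\E_\mu[\eta_k\eta_l\bar\eta_{k'}\bar\eta_{l'}]$, which factor by independence; the surviving diagonal patterns are bounded directly by $\sum_{N<|k|\le M}\|G_k\|_{L^2(\mu)}^2$ via Bessel's inequality. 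No second-order information on $\rho_t$ is ever needed, and no kernel analysis specific to $\gamma=2$ enters beyond the hypothesized $\ell^2$ bound on $(G_k)$.
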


On the other hand, it seems impossible to establish a similar convergence result for $\gamma>2$. Therefore, a natural problem arises, namely

\begin{conjecture}
The gradient estimate \eqref{thm-main-result-2.2} holds when $\gamma =2$ in \eqref{vector-fields}.
\end{conjecture}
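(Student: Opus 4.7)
The plan is to extend the paper's own strategy --- prove the estimate for a regularization of the noise and pass to the limit --- to the borderline regime $\gamma=2$. A useful preliminary observation is that for the vector fields in \eqref{vector-fields} one has $\sigma_k\cdot\nabla\sigma_k\equiv 0$ (since $k^\perp\cdot k=0$), so the second series in \textbf{(H2)} converges trivially for every $\gamma\geq 2$. Only the first series, $\sum_k\|\sigma_k\|_\infty^2=\sum_{k\in\Z_0^2}|k|^{-2(\gamma-1)}$, fails at $\gamma=2$, and only logarithmically in the Fourier cutoff.

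The first step is a Fourier truncation: set $\sigma_k^{(N)}=\sigma_k\,\mathbf{1}_{|k|\leq N}$, which trivially satisfies \textbf{(H1)} and \textbf{(H2)}. Applying Theorem~\ref{thm-main-result} and Theorem~\ref{thm-main-result-2}(ii) to this truncated noise produces a density $\rho^N$ with $0\leq\rho^N\leq\|\rho_0\|_\infty$ together with the $N$-uniform bound
$$\sum_{|k|\leq N}\int_0^T\!\int\big\<\sigma_k\cdot\nabla\omega,D_\omega\rho^N_t(\omega)\big\>^2\,\mu(\d\omega)\,\d t\leq\|\rho_0\|_\infty^2.$$
The second step is compactness and lower semicontinuity: by the $L^\infty$ bound, extract a weak-$*$ limit $\rho\in L^\infty([0,T]\times H^{-1-}(\T^2))$ of a subsequence of $\{\rho^N\}$. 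For each fixed $k$, the integration-by-parts interpretation of $\<\sigma_k\cdot\nabla\omega,D_\omega\rho^N_t\>$ against test functions in $\mathcal{FC}_{P,T}$ (the one used in the proof of Theorem~\ref{thm-main-result-2}(ii)) identifies its weak-$L^2$ limit as $\<\sigma_k\cdot\nabla\omega,D_\omega\rho_t\>$. Lower semicontinuity of the $L^2$-norm under weak convergence, together with monotone convergence in the mode index $k$, then propagates the $N$-uniform bound to \eqref{thm-main-result-2.2} at $\gamma=2$.

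The third step, which is the real content of the conjecture, is to show that $\rho$ is genuinely the density produced by Theorem~\ref{thm-main-result} at $\gamma=2$, equivalently that Theorem~\ref{thm-main-result} itself extends to this critical noise. Since Definition~\ref{def-solution} relies on \textbf{(H2)}, it must be replaced by a renormalized version. The natural candidate is the It\^o form \eqref{weak-Euler-vorticity-1}: because $\sigma_k\cdot\nabla\sigma_k=0$, the correction operator $\frac12\sum_k\sigma_k\cdot\nabla(\sigma_k\cdot\nabla\phi)$ corresponds, on the Fourier side of $\phi$, to a multiplier which for $\gamma=2$ is a logarithmically divergent multiple of $\Delta\phi$; subtracting this Wick-type constant should yield a well-posed limiting martingale problem. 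Combined with tightness of the approximating solutions $\omega^N$ in $C([0,T],H^{-1-}(\T^2))$ --- expected from uniform moment bounds on the enstrophy and on the nonlinear term $\<\omega\otimes\omega,H_\phi\>$ --- passage to the limit in the renormalized stochastic integrals would identify $\rho$ as the density of a critical-noise $\rho$-white noise solution.

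The main obstacle is exactly this third step. Making the logarithmic renormalization rigorous, and in particular passing to the limit in the stochastic integrals $\sum_{|k|\leq N}\int_0^t\<\omega^N_s,\sigma_k\cdot\nabla\phi\>\,\d W^k_s$ at the critical 2D Kraichnan-type scaling, is where the techniques of the paper break down: the relevant quadratic variations do not improve as $N\to\infty$, and a genuinely new notion of solution seems indispensable. By contrast, once such a notion is in place Steps 1 and 2 should be essentially mechanical, so the conjectural status of the statement reflects the absence of this critical existence theory rather than a hidden difficulty in the gradient estimate itself.
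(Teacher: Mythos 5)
This statement is a genuine \emph{conjecture} in the paper: the authors explicitly state ``For the moment, we do not know how to solve this problem,'' so there is no proof to compare against, and your proposal does not close the gap either --- as you yourself concede in your final paragraph. Your Steps 1 and 2 are sound and in fact mirror the paper's own limiting technique from Section 4.2: applying Theorems \ref{thm-main-result} and \ref{thm-main-result-2}(ii) to the truncated family $\{\sigma_k\}_{|k|\leq N}$ (which satisfies \textbf{(H1)}--\textbf{(H2)} trivially, since $\sigma_k\cdot\nabla\sigma_k=0$) gives the $N$-uniform bound, and the integration-by-parts identity \eqref{eq-17} together with $\div_\mu(\sigma_k\cdot\nabla\omega)=0$ (Lemma \ref{lem-divergence}) does let you identify weak $L^2$ limits of $\big\<\sigma_k\cdot\nabla\omega, D_\omega\rho^N_t\big\>$ and transport the estimate by lower semicontinuity and Fatou in $k$. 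But what this produces is a bound for \emph{some} weak-$*$ limit $\rho$ of densities of solutions to \emph{different} equations (truncated noise), with no link between $\rho$ and any $\gamma=2$ Euler dynamics. Since the conjecture concerns the density of a solution driven by the critical noise \eqref{vector-fields} with $\gamma=2$ --- the object Theorem \ref{thm-integrability} needs --- the existence theory of Step 3 is not an add-on but constitutive of the statement, and it is exactly the step that is open.

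Beyond conceding the gap, your sketch of Step 3 also mischaracterizes the difficulty in one respect: the divergent It\^o--Stratonovich correction $\frac12\sum_{|k|\leq N}\sigma_k\cdot\nabla(\sigma_k\cdot\nabla\phi)$ is, at $\gamma=2$, of the form $\frac{c_N}{2}\Delta\phi$ with $c_N\sim\log N$, i.e.\ a logarithmically divergent multiple of an \emph{unbounded operator}, not a Wick-type constant that can simply be subtracted. Removing it outright changes the equation (it deletes an infinite dissipation), while keeping it requires compensation from the martingale part, whose quadratic variation $\sum_{|k|\leq N}\big\<\omega,\sigma_k\cdot\nabla\phi\big\>^2$ diverges under $\mu$ at the same logarithmic order --- this delicate Kraichnan-type balance is precisely the obstruction the paper flags when it notes that ``the passage from the Stratonovich equation to the It\^o equation produces an extra term which diverges at a logarithmic order.'' In short: your proposal is a reasonable research program whose routine parts (Steps 1--2) are consistent with the paper's methods, but its decisive step remains exactly the open problem the authors pose, so the statement stays a conjecture.
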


For the moment, we do not know how to solve this problem. For instance, the assumption \textbf{(H2)} is not satisfied in this case. Accordingly, the passage from the Stratonovich equation \eqref{weak-Euler-vorticity} to the It\^o equation produces an extra term which diverges at a logarithmic order. To summarize, with Theorem \ref{thm-integrability} we have identified a new example of transport type noise, namely (1.8) with $\gamma=2$, which is very promising for the purpose of regularization by noise, but it is at the border of the regularity class ($\gamma>2$) where Stratonovich noise has a meaning and where we can prove Theorems \ref{thm-main-result} and \ref{thm-main-result-2}. With these partial results we hope to promote research on this new type of noise.

The paper is organized as follows. In Section 2, we recall some facts about the stochastic dynamics of $N$-point vortices. More precisely, Section 2.1 is concerned with stochastic point vortices with an initial distribution which converging weakly to the white noise measure $\mu$, and Section 2.2 studies the case of general initial distributions, which is the basis for the approximation argument in later parts of the paper. We provide the proof of Theorem \ref{thm-main-result} in Section 3 which mainly follows the arguments in \cite[Section 4.2]{Flandoli}. The two assertions of Theorem \ref{thm-main-result-2} will be proved in Sections 4.1 and 4.2 respectively. In particular, the proof of assertion (ii) constitutes the main technical part of the current work, and it is done by first approximating the singular Biot--Savart kernel $K$ with smooth ones, and then letting the number $N$ of point vortices tend to infinity. Finally, we prove Theorem \ref{thm-integrability} in Section 5 by making use of the facts that, under the white noise measure $\mu$, the family $\big\{\big\<\omega, e^{2\pi {\rm i} k\cdot x} \big\>\big\}_{k\in \Z^2_0}$ consists of i.i.d. standard Gaussian r.v.'s and is an orthonormal basis of $L^2\big(H^{-1-}(\T^2), \mu\big)$.

\section{Stochastic point vortex dynamics}

According to \cite{MP}, in the singular case that the vorticity $\omega_0$ is given by $N\geq 2$ point vortices, the Euler equations \eqref{Euler-vorticity} can be interpreted as the finite dimensional dynamics in $(\T^2)^N$:
  $$\frac{\d X^{i,N}_t}{\d t} = \frac1{\sqrt N} \sum_{j=1}^N \xi_j K\big(X^{i,N}_t -X^{j,N}_t\big),\quad i=1,\ldots, N,$$
with initial condition $\big(X^{1,N}_0, \ldots, X^{N,N}_0\big)\in (\T^2)^N \setminus \Delta_N$, where $\xi = (\xi_1, \ldots, \xi_N) \in (\R \setminus \{0\})^N$ and
  $$\Delta_N= \big\{(x_1,\ldots, x_n)\in (\T^2)^N: \mbox{there are } i\neq j \mbox{ such that } x_i=x_j \big\}$$
is the generalized diagonal. The authors gave in \cite{MP} an example in the case $N=3$, which shows that the above system with different initial positions coincide in finite time. Nevertheless, it is well posed for $\big( {\rm Leb}_{\T^2}^{\otimes N } \big)$-a.e. starting point in $(\T^2)^N \setminus \Delta_N$.

For the stochastic Euler equations \eqref{stoch-Euler-vorticity}, the random version of the point vortex dynamics is given by
  \begin{equation}\label{stoch-point-vortex}
  \d X^{i,N}_t = \frac1{\sqrt N} \sum_{j=1}^N \xi_j K\big(X^{i,N}_t -X^{j,N}_t\big)\,\d t + \sum_{j=1}^N \sigma_j\big(X^{i,N}_t \big) \circ \d W^j_t,\quad i=1,\ldots, N.
  \end{equation}
Here, we use only a finite number of noises, because the stochastic equations with infinitely many noises may not admit a solution under the assumptions \textbf{(H1)} and \textbf{(H2)}. One can of course use a different number of noises, but the intuition is that this number should tend to $\infty$ as $N$ increases. A heuristic discussion of the relationship between \eqref{stoch-point-vortex} and
  \begin{equation}\label{stoch-Euler-vorticity-finite}
  \d \omega_t+ u_t\cdot \nabla \omega_t\, \d t +\sum_{j=1}^N \sigma_j\cdot \nabla\omega_t\circ \d W^j_t=0
  \end{equation}
can be found in \cite[Section 2.3]{FGP}. Roughly speaking, let $\big(X^{1,N}_t, \ldots, X^{N,N}_t\big)$ be the solution of \eqref{stoch-point-vortex} and set
  $$\omega^N_t= \frac1{\sqrt N} \sum_{i=1}^N \xi_i \delta_{X^{i,N}_t},$$
then for any $\phi\in C^\infty(\T^2)$, by applying the It\^o formula, $\omega^N_t$ satisfies
  \begin{equation}\label{weak-Euler-vorticity-finite}
  \d \<\omega_t,\phi\>= \<\omega_t, u_t\cdot \nabla \phi\>\, \d t +\sum_{j=1}^N \<\omega_t,\sigma_j\cdot \nabla\phi\> \circ \d W^j_t.
  \end{equation}

Fix any $N\in \N$ and denote by $\lambda_N= \Leb_{\T^2}^{\otimes N }$ which is a probability measure on $(\T^2)^N$.

\begin{theorem}\label{thm-invariance}
For every $(\xi_1,\ldots, \xi_N)\in (\R \setminus \{0\})^N$ and for $\lambda_N$-a.e. $\big(X^{1,N}_0, \ldots, X^{N,N}_0\big)\in (\T^2)^N \setminus \Delta_N$, almost surely, the system \eqref{stoch-point-vortex} has a unique strong solution $\big(X^{1,N}_t, \ldots, X^{N,N}_t\big)$ for all $t\geq 0$. Moreover, if the initial data $\big(X^{1,N}_0, \ldots, X^{N,N}_0\big)$ is a random variable distributed as $\lambda_N$, but is independent of the Brownian motions $\big\{(W^j_t)_{t\geq 0}: 1\leq j\leq N \big\}$, then $\big(X^{1,N}_t, \ldots, X^{N,N}_t\big)$ is a stationary process with invariant marginal law $\lambda_N$.
\end{theorem}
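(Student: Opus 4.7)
The plan is to construct the solution by regularizing the Biot--Savart kernel $K$ into a smooth, divergence-free $K_\varepsilon$, obtain a unique smooth global flow on $(\T^2)^N$ for the approximating SDE, show that $\lambda_N$ is invariant for this flow, and finally pass $\varepsilon\to 0$, using invariance to control how close particles come to the diagonal. Once we know that the limit trajectory avoids $\Delta_N$ for $\lambda_N$-a.e.\ starting point, the coefficients of the original SDE become effectively smooth along the trajectory and standard pathwise uniqueness applies.

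I would first rewrite the regularized Stratonovich SDE as a system on $(\T^2)^N$ driven by the vector field $B^\varepsilon = \big(N^{-1/2}\sum_j \xi_j K_\varepsilon(x_i-x_j)\big)_{i=1}^N$ and, for each Brownian motion $W^j$, the lifted vector field $\Sigma_j(x_1,\ldots,x_N)=(\sigma_j(x_1),\ldots,\sigma_j(x_N))$. Each $\Sigma_j$ is divergence-free on $(\T^2)^N$ because $\sigma_j$ is divergence-free on $\T^2$; the drift $B^\varepsilon$ is divergence-free provided one mollifies via the stream function, $K_\varepsilon=\nabla^\perp G_\varepsilon$. Since every Stratonovich vector field preserves $\lambda_N$, the generator is the sum of a skew-symmetric first-order operator and a non-positive self-adjoint second-order operator on $L^2(\lambda_N)$, so $\lambda_N$ is invariant and $X^{N,\varepsilon}$ started from $\lambda_N$ is stationary.

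Invariance immediately yields the tubular non-collision bound
$$\E \int_0^T \mathbf 1_{\{d(X^{N,\varepsilon}_t,\Delta_N)<\delta\}}\,\d t = T\,\lambda_N\big(\{d(\cdot,\Delta_N)<\delta\}\big) \leq C_N T \delta^2,$$
uniformly in $\varepsilon$. Combined with the uniform boundedness of the $\sigma_j$ and the uniform boundedness of $K_\varepsilon$ away from any neighborhood of the diagonal, this yields tightness of $\{X^{N,\varepsilon}\}_\varepsilon$ in $C([0,T],(\T^2)^N)$. A Skorokhod representation plus the non-collision estimate passed to the limit identifies the drift, namely $\int_0^t K_\varepsilon(X^{i,\varepsilon}_s-X^{j,\varepsilon}_s)\,\d s \to \int_0^t K(X^{i}_s-X^{j}_s)\,\d s$ almost surely, because the singular set of $K$ is avoided in the limit. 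Pathwise uniqueness for $\lambda_N$-a.e.\ initial condition follows since, on the $\lambda_N$-full event that the limit trajectory stays in a compact subset of $(\T^2)^N\setminus\Delta_N$ up to time $T$, the coefficients become smooth and a Yamada--Watanabe argument upgrades weak existence to strong existence and uniqueness. Stationarity of the marginal $\lambda_N$ is inherited from the approximants by weak convergence.

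I expect the main obstacle to lie in the limit identification of the singular drift, which requires the non-collision bound to be sharp enough to integrate $|K(x-y)|\lesssim |x-y|^{-1}$ against the two-body separation density. The $\lesssim \delta^2$ tubular estimate is precisely what is needed, but one must combine it with moment control on the separation process that is also uniform in $\varepsilon$; without the exact divergence-free structure of both $B^\varepsilon$ and of the $\Sigma_j$ (which makes $\lambda_N$ invariant rather than merely absolutely continuous with bounded density), this estimate would fail and passage to the limit would break down. The stationarity statement is then a direct corollary of invariance of $\lambda_N$ along the approximants and the weak convergence in time marginals.
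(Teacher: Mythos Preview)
Your overall strategy---regularize $K$, use the divergence-free structure to obtain $\lambda_N$-invariance for the smooth system, then pass to the limit---is precisely the route of \cite{FGP}, which the paper simply cites for the existence and uniqueness assertion. For the invariance part the paper's argument is somewhat shorter than yours: instead of inferring stationarity from weak convergence of time marginals, it uses that the regularized flow $\varphi^\delta_t$ is almost surely a volume-preserving diffeomorphism of $(\T^2)^N$, so that $\int h(\varphi^\delta_t(X_0))\,\d X_0 = \int h(Y)\,\d Y$ holds pathwise; taking expectation and then letting $\delta\to 0$ via the pointwise a.s.\ convergence $\varphi^\delta_t(X_0)\to\varphi_t(X_0)$ (already established in \cite{FGP}) gives invariance directly.

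There is, however, a genuine gap in your sketch of existence and uniqueness. The tubular occupation-time bound
\[
\E\int_0^T \mathbf 1_{\{d(X^{N,\varepsilon}_t,\Delta_N)<\delta\}}\,\d t \leq C_N T\delta^2
\]
only tells you that the limit process spends zero Lebesgue time on $\Delta_N$ and (since $K\in L^1_{\rm loc}$) that the singular drift is integrable along the path. It does \emph{not} yield the stronger claim you invoke, namely that the limit trajectory stays in a compact subset of $(\T^2)^N\setminus\Delta_N$, i.e.\ that $\inf_{t\in[0,T]} d(X_t,\Delta_N)>0$ almost surely. A continuous path can touch $\Delta_N$ while spending zero time there. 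Without this non-collision property the coefficients are not locally Lipschitz along the trajectory and your Yamada--Watanabe step for pathwise uniqueness fails. Closing this gap requires a separate estimate controlling $\min_{i\neq j}|X^{i,\varepsilon}_t-X^{j,\varepsilon}_t|$ from below uniformly in $\varepsilon$ (with high probability), which is exactly the technical core of \cite{FGP} that the paper cites rather than reproves.
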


\begin{proof}
Note that our hypothesis \textbf{(H1)} is the same as the first one of \cite[Hypothesis 1]{FGP}, hence the first result follows from \cite[Theorem 8]{FGP}. We denote by $\varphi_t(X_0)$ the strong solution to \eqref{stoch-point-vortex} with initial condition $X_0\in (\T^2)^N \setminus \Delta_N$ when the solution exists. We remark that we do not need the ellipticity assumption in \cite[Hypothesis 1]{FGP}, since the existence of solution to \eqref{stoch-point-vortex} for a.e. starting point is enough for our purpose.

For proving the second assertion, let $K^\delta$ be the approximation of $K$ given in \cite[Section 3.2]{FGP} and $\varphi^\delta_t$ the flow of diffeomorphisms generated by \eqref{stoch-point-vortex} with $K$ replaced by $K^\delta$. Since the vector fields involved in \eqref{stoch-point-vortex} are divergence free, for any smooth function $h$ on $(\T^2)^N$, we have a.s. (cf. \cite[Lemma 3]{FGP})
  $$\int_{(\T^2)^N} h\big(\varphi^\delta_t(X_0)\big)\,\d X_0 = \int_{(\T^2)^N} h(Y)\,\d Y,\quad t\geq 0.$$
Therefore,
  $$\E\int_{(\T^2)^N} h\big(\varphi^\delta_t(X_0)\big)\,\d X_0 = \int_{(\T^2)^N} h(Y)\,\d Y,\quad t\geq 0.$$
For $(\lambda_N\otimes \P)$-a.s. $(X_0, \theta)\in (\T^2)^N \times \Theta$, we have $\varphi^\delta_t(X_0, \theta)\to \varphi_t(X_0, \theta)$ as $\delta\to 0$, see the proof of \cite[Theorem 8]{FGP}. Letting $\delta\to 0$ in the above equality leads to
  $$\int_{(\T^2)^N} h(Y)\,\d Y= \E\int_{(\T^2)^N} h\big(\varphi_t(X_0)\big)\,\d X_0 = \int_{(\T^2)^N} P^N_t h (X_0)\,\d X_0,$$
where $P^N_t$ is the semigroup associated to the system \eqref{stoch-point-vortex}. This implies that $\lambda_N$ is the invariant measure of $P^N_t$. The stationarity follows from the fact that the equations \eqref{stoch-point-vortex} are of time-homogeneous Markovian type.
\end{proof}

\subsection{Stochastic point vortices with initial distribution converging to white noise} \label{sect-white-noise}

On the probability space $(\Theta, \mathcal F, \P)$, let $\{\xi_n\}$ be an i.i.d. sequence of $N(0,1)$ r.v.'s and $\{X^n_0\}$ be an i.i.d. sequence of $\T^2$-valued r.v.'s, independent of $\{\xi_n\}$ and uniformly distributed. Both families are independent on the Brownian motions $\big\{(W^j_t)_{t\geq 0}: j\in \N \big\}$. For every $N\in \N$, denote by
  $$\lambda_N^0= \big(N(0,1) \otimes \Leb_{\T^2}\big)^{\otimes N }$$
the law of the random vector
  $$\big( (\xi_1, X^1_0), \ldots, (\xi_N, X^N_0)\big).$$
Let us consider the measure-valued vorticity field
  $$\omega^N_0 = \frac1{\sqrt N} \sum_{n=1}^N \xi_n \delta_{X^n_0}.$$
As mentioned in \cite[Remark 20]{Flandoli}, $\omega^N_0$ can be regarded as a r.v. taking values in the space $H^{-1-}(\T^2)$ whose law is denoted by $\mu_N^0$. Denote by $\mathcal M(\T^2)$ the space of signed measures on $\T^2$ with finite variation, and
  $$\mathcal M_N(\T^2)= \big\{\mu \in \mathcal M(\T^2)\,| \,\exists \, X\subset \T^2 \mbox{ such that } \#(X)=N \mbox{ and } {\rm supp}(\mu) = X\big\}.$$
We can define the map $\mathcal T_N: (\R \times \T^2)^N \to \mathcal M_N(\T^2) \subset H^{-1-}(\T^2)$ as
  \begin{equation}\label{mapping}
  \big( (\xi_1, X^1_0), \ldots, (\xi_N, X^N_0)\big) \mapsto \omega^N_0= \frac1{\sqrt N} \sum_{n=1}^N \xi_n \delta_{X^n_0},
  \end{equation}
then it holds that
  $$\mu_N^0 = (\mathcal T_N)_\# \lambda_N^0 = \lambda_N^0\circ \mathcal T_N^{-1}.$$
It is proved in \cite[Proposition 21]{Flandoli} that, for any $\delta>0$, as $N\to \infty$, $\omega_0^N$ converges in law on $H^{-1-\delta}(\T^2)$ to the white noise $\omega_{WN}$.

\begin{proposition}\label{prop-weak-convergence}
As $N\to \infty$, the probability measures $\mu_N^0$ converge weakly to $\mu$ on $H^{-1-}(\T^2)$.
\end{proposition}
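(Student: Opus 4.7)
The plan is to derive the statement by combining the already established convergence on each individual Hilbert space $H^{-1-\delta}(\T^2)$ with a Fréchet-space tightness argument. First I record the setup: $H^{-1-}(\T^2) = \bigcap_{k\ge 1} H^{-1-1/k}(\T^2)$ is endowed with the projective limit topology coming from the countable family of Hilbert seminorms $\|\cdot\|_{H^{-1-1/k}}$, which makes it a separable Fréchet (hence Polish) space, and both $\mu_N^0$ and $\mu$ are concentrated there. Weak convergence on $H^{-1-}(\T^2)$ will follow from Prohorov's theorem once I establish (a) tightness of $\{\mu_N^0\}_{N\ge 1}$ on $H^{-1-}(\T^2)$, and (b) uniqueness of the limit via its action on smooth test functions.

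For (a) I would use that, by \cite[Proposition 21]{Flandoli}, $\mu_N^0 \Rightarrow \mu$ on each Hilbert space $H^{-1-1/k}(\T^2)$, which by Prohorov's theorem on that space provides, for every $k\in\N$ and every $\eta>0$, a set $K_{k,\eta}$ (which may be assumed closed in $H^{-1-1/k}$) that is compact in $H^{-1-1/k}(\T^2)$ and satisfies $\sup_N \mu_N^0\bigl(H^{-1-1/k}\setminus K_{k,\eta}\bigr) < \eta$. Given $\varepsilon>0$, setting
\[
K_\varepsilon := \bigcap_{k\ge 1} K_{k,\varepsilon/2^k}
\]
yields $\sup_N \mu_N^0\bigl(H^{-1-}\setminus K_\varepsilon\bigr)<\varepsilon$. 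The key observation is that $K_\varepsilon$ is compact in the projective limit $H^{-1-}(\T^2)$: any sequence in $K_\varepsilon$ lies in $K_{1,\varepsilon/2}$ and so has a subsequence convergent in $H^{-2}$; applying the same reasoning along $k=2,3,\ldots$ and extracting a diagonal subsequence produces one limit point that lies in, and attains convergence in, every $H^{-1-1/k}$, hence in $H^{-1-}(\T^2)$.

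For (b) I observe that for any $\phi_1,\ldots,\phi_n\in C^\infty(\T^2)$ the map $\omega\mapsto(\langle\omega,\phi_1\rangle,\ldots,\langle\omega,\phi_n\rangle)$ is continuous from $H^{-1-\delta}(\T^2)$ to $\R^n$, so the known convergence on $H^{-1-\delta}$ already forces the finite-dimensional marginals of $\mu_N^0$ against smooth test functions to converge to the corresponding Gaussian marginals of $\mu$. Since cylindrical functions based on such $\phi_j$ form a separating family on $H^{-1-}(\T^2)$, every subsequential weak limit of $\{\mu_N^0\}$ on $H^{-1-}(\T^2)$ must coincide with $\mu$. Combined with the tightness from (a), this gives $\mu_N^0\Rightarrow\mu$ on $H^{-1-}(\T^2)$.

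The main technical point is step (a): the diagonal extraction showing that compactness in each $H^{-1-1/k}$ yields compactness in the Fréchet space $H^{-1-}(\T^2)$. Once this is in place, the proof is essentially a bookkeeping exercise in the continuous-mapping theorem and Prohorov's theorem, with no further use of the specific structure of the point-vortex approximation beyond what is already encoded in the cited \cite[Proposition 21]{Flandoli}.
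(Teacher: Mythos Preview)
Your proof is correct and follows essentially the same route as the paper: both establish tightness on $H^{-1-}(\T^2)$ by intersecting compact sets $K_{k,\varepsilon/2^k}$ obtained from Prohorov's theorem on each $H^{-1-1/k}$ and then showing the intersection is compact via a diagonal-subsequence argument, after which any subsequential limit is identified with $\mu$ using the already-known convergence on each fixed $H^{-1-\delta}$. The only cosmetic difference is in the identification step: the paper notes directly that every bounded continuous function on $H^{-1-\delta}$ is continuous on $H^{-1-}$, so weak convergence on $H^{-1-}$ transfers to each $H^{-1-\delta}$ and forces the limit to be $\mu$, whereas you reach the same conclusion via cylindrical functions as a separating class.
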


\begin{proof}
\textbf{Step 1.} We first show that $\{\mu_N^0: N\in \N\}$ is tight on $H^{-1-}(\T^2)$. Fix an arbitrary $\varepsilon >0$. For every $n\in \N$, since $\mu_N^0$ converges weakly to $\mu$ on $H^{-1-1/n}(\T^2)$, it follows from \cite[p. 60, Theorem 5.2]{Billingsley} that the family $\{\mu_N^0: N\in \N\}$ is tight on $H^{-1-1/n}(\T^2)$. Therefore, there exists a compact set $K_{\varepsilon, n} \subset H^{-1-1/n}(\T^2)$ such that
  $$\sup_{N\in \N} \mu_N^0\big( H^{-1-1/n}(\T^2) \setminus K_{\varepsilon, n}\big) < \frac \varepsilon{2^n}.$$
Let $K_\varepsilon= \cap_{n\in \N} K_{\varepsilon, n}$; then $K_\varepsilon \subset \cap_{n\in \N} H^{-1-1/n}(\T^2) = H^{-1-}(\T^2)$. By the above inequality, we have for all $N\in \N$ that
  $$\mu_N^0\big( H^{-1-}(\T^2) \setminus K_\varepsilon\big) \leq \mu_N^0 \bigg(\bigcup_{n=1}^\infty \big( H^{-1-1/n}(\T^2) \setminus K_{\varepsilon, n}\big)\bigg) < \sum_{n=1}^\infty \frac \varepsilon{2^n} = \varepsilon.$$
Then the tightness of $\{\mu_N^0: N\in \N\}$ on $H^{-1-}(\T^2)$ will follow if we can show that $K_\varepsilon$ is compact in $H^{-1-}(\T^2)$. It is equivalent to show that $K_\varepsilon$ is sequentially compact in itself. Let $\{\omega_n: n\in \N\} \subset K_\varepsilon$ be an arbitrary sequence which will also be denoted by $\{\omega_{0,n}: n\in \N\}$.

Since $K_{\varepsilon, 1}$ is compact in $H^{-2}(\T^2)$ and $\{\omega_{0,n}: n\in \N\} \subset K_{\varepsilon, 1}$, we can find a subsequence $\{\omega_{1,n}: n\in \N\}$ of $\{\omega_{0,n}: n\in \N\}$, such that $\omega_{1,n}$ converges with respect to the norm $\|\cdot \|_{H^{-2}}$ to some $\omega_{1,0} \in K_{\varepsilon, 1}$.

Repeating this procedure inductively, for every $m\in \N$, we can find a subsequence $\{\omega_{m,n}: n\in \N\}$ of $\{\omega_{m-1, n}: n\in \N\}$ such that $\omega_{m,n}$ converges with respect to the norm $\|\cdot \|_{H^{-1-1/m}}$ to some $\omega_{m,0} \in K_{\varepsilon, m}$.

We claim that $\omega_{m, 0} = \omega_{m+1, 0}$ for all $m\in \N$. Indeed, on the one hand, since $\omega_{m+1,n}$ converge to $\omega_{m+1,0}$ with respect to the norm $\|\cdot \|_{H^{-1-1/(m+1)}}$, it also converge to $\omega_{m+1,0}$ with respect to the weaker norm $\|\cdot \|_{H^{-1-1/m}}$. On the other hand, as a subsequence of $\{\omega_{m,n}: n\in \N\}$, $\{\omega_{m+1,n}: n\in \N\}$ also converge in $H^{-1-1/m}(\T^2)$ to $\omega_{m,0}$. By the uniqueness of limit, we obtain $\omega_{m+1,0} = \omega_{m,0}$.

Therefore we can denote by $\omega_0$ the common limit of all the subsequences, which belongs to all $K_{\varepsilon, m}$, and hence is in $K_\varepsilon$. Now taking the diagonal subsequence $\{\omega_{n,n}: n\in \N\}$, we see that $\omega_{n,n}$ tends to $\omega_0$ with respect to all the norms $\|\cdot \|_{H^{-1-1/m}},\, m\geq 1$, hence the convergence holds in $H^{-1-}(\T^2)$ too. This shows that $K_\varepsilon$ is sequentially compact in itself.

\textbf{Step 2.} Let $\big\{\mu_{N_k}^0: k\in\N \big\}$ be a subsequence converging weakly to some $\nu$ on $H^{-1-}(\T^2)$. Then we have $\nu =\mu$. Indeed, for any bounded continuous function $F$ on $H^{-1-\delta}(\T^2)$, it is also continuous on $H^{-1-}(\T^2)$, hence $\lim_{k\to \infty} \int_{H^{-1-\delta}} F\,\d \mu_{N_k}^0 = \int_{H^{-1-\delta}} F\,\d \nu$. We conclude that $\mu_{N_k}^0$ converges weakly to $\nu$ on $H^{-1-\delta}(\T^2)$ for any $\delta >0$. This implies $\nu =\mu$. By the corollary of \cite[Theorem 5.1]{Billingsley}, the whole sequence $\{\mu_N^0: N\in\N\}$ converge weakly to $\mu$ on $H^{-1-}(\T^2)$.
\end{proof}

As a consequence of Theorem \ref{thm-invariance}, we can prove (cf. \cite[Proposition 22]{Flandoli} for the proof)

\begin{proposition}\label{prop-stationary}
Consider the stochastic point vortex dynamics \eqref{stoch-point-vortex} with random intensities $(\xi_1,\ldots, \xi_N)$ and random initial positions $\big( X^1_0, \ldots, X^N_0\big)$ distributed as $\lambda_N^0$. For a.s. value of $\big( (\xi_1, X^1_0), \ldots, (\xi_N, X^N_0)\big)$, the stochastic dynamics $\big( X^{1,N}_t, \ldots, X^{N,N}_t\big)$ is well defined in $\Delta_N^c$ for all $t\geq 0$, and the associated measure-valued vorticity $\omega^N_t$ satisfies the stochastic weak vorticity formulation of \eqref{weak-Euler-vorticity-finite}: for all $\phi \in C^\infty(\T^2)$,
  \begin{equation}\label{prop-stationary.1}
  \aligned
  \big\< \omega^N_t, \phi\big\> &= \big\< \omega^N_0, \phi\big\> +\int_0^t \big\<\omega^N_s\otimes \omega^N_s, H_\phi \big\>\, \d s +\sum_{j=1}^N \int_0^t \big\< \omega^N_s,\sigma_j \cdot \nabla \phi \big\>\,\d W^j_s\\
  &\hskip13pt + \frac12 \sum_{j=1}^N \int_0^t \big\<\omega^N_s,\sigma_j \cdot \nabla (\sigma_j \cdot \nabla \phi) \big\>\,\d s.
  \endaligned
  \end{equation}
The stochastic process $\omega^N_t$ is stationary in time, with the law $\mu_N^0$ at any time $t\geq 0$.
\end{proposition}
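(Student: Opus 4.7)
My plan is to derive everything by conditioning on the random intensities $\xi = (\xi_1, \ldots, \xi_N)$ and invoking Theorem \ref{thm-invariance}, essentially following the scheme of \cite[Proposition 22]{Flandoli} with the normalization $1/\sqrt{N}$ and Gaussian intensities. For well-posedness, since the law of $\xi$ is absolutely continuous, $\xi_i \neq 0$ for all $i$ almost surely, so Theorem \ref{thm-invariance} is applicable conditionally on each realization of $\xi$. Because $(X_0^1, \ldots, X_0^N)$ is $\lambda_N$-distributed and independent of $\xi$ and of the Brownian motions, Fubini yields a set of full $\lambda_N^0$-measure of starting data on which the strong solution of \eqref{stoch-point-vortex} exists in $\Delta_N^c$ for all $t \geq 0$.

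For the weak vorticity formulation, I would apply the Stratonovich chain rule to $\phi(X^{i,N}_t)$ along \eqref{stoch-point-vortex}, multiply by $\xi_i/\sqrt{N}$, and sum over $i$. The noise terms assemble into $\sum_{j=1}^N \langle \omega^N_t, \sigma_j \cdot \nabla\phi\rangle \circ \d W^j_t$, while the drift becomes $\frac{1}{N}\sum_{i,j} \xi_i \xi_j K(X^{i,N}_t - X^{j,N}_t) \cdot \nabla\phi(X^{i,N}_t)$. By the antisymmetry $K(y - x) = -K(x - y)$ and the defining symmetrization of $H_\phi$, this rewrites as $\langle \omega^N_t \otimes \omega^N_t, H_\phi\rangle$, with the diagonal contributions $i = j$ absorbed by the convention $H_\phi(x,x) := 0$ (consistent since $\nabla\phi(x) - \nabla\phi(x) = 0$). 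Converting the Stratonovich integrals to It\^o form adds a quadratic covariation correction which evaluates to $\frac12 \sum_{j=1}^N \langle \omega^N_t, \sigma_j \cdot \nabla(\sigma_j \cdot \nabla \phi)\rangle\, \d t$, yielding \eqref{prop-stationary.1}.

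For the stationarity claim, conditional on $\xi$ the system \eqref{stoch-point-vortex} is a time-homogeneous Markov process which, by Theorem \ref{thm-invariance}, admits $\lambda_N$ as invariant measure; hence, conditionally on $\xi$, one has $(X_t^{1,N}, \ldots, X_t^{N,N}) \sim \lambda_N$ for every $t \geq 0$. Independence of $\xi$ from the initial positions and the driving noise then upgrades this to $((\xi_1, X_t^{1,N}), \ldots, (\xi_N, X_t^{N,N})) \sim \lambda_N^0$, and pushing forward by the map $\mathcal T_N$ from \eqref{mapping} gives $\omega_t^N \sim \mu_N^0$. The same conditional argument applied to finite-dimensional distributions, combined with the time-homogeneous Markov property, delivers the stationarity in time.

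The main delicate point is the symmetrization in the second step: since $K$ is singular at the origin, the cancellation of the diagonal terms $i = j$ must be carried out via the antisymmetry of $K$ before any attempt to evaluate $K(0)$, and this is precisely what the convention $H_\phi(x, x) := 0$ is designed to encode when making sense of the pairing $\langle \omega^N_t \otimes \omega^N_t, H_\phi\rangle$ for the atomic measure $\omega^N_t$. Once this is accepted, the other two steps are routine consequences of Theorem \ref{thm-invariance} and Fubini.
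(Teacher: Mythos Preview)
Your proposal is correct and follows essentially the same approach that the paper indicates: the paper does not spell out a proof but simply states that the result is a consequence of Theorem~\ref{thm-invariance} and refers to \cite[Proposition~22]{Flandoli} for the argument, which is exactly the scheme you reproduce (condition on $\xi$, apply Theorem~\ref{thm-invariance} and Fubini for well-posedness and invariance of $\lambda_N$, then push forward by $\mathcal T_N$; derive \eqref{prop-stationary.1} by It\^o/Stratonovich calculus on $\phi(X^{i,N}_t)$ and the antisymmetry of $K$). Your handling of the diagonal terms via the convention $H_\phi(x,x)=0$ is the standard one used throughout the paper.
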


The following integrability properties of $\omega^N_t$ are proved in \cite[Lemma 23]{Flandoli} (except the second estimate, whose proof is similar to that of the first one).

\begin{lemma}\label{2-lem-integrability}
Assume $f:\T^2\times \T^2\to \R$ and $g:\T^2 \to \R$ are bounded and measurable, and $f$ is symmetric. Then, for every $p\geq 1$ and $\delta>0$, there are constants $C_p, C_{p,\delta}>0$ such that for all $t\in [0,T]$,
  $$\E\big[ \big| \big\<\omega^N_t \otimes \omega^N_t, f \big\> \big|^p \big] \leq C_p \|f\|_\infty^p,\quad \E\big[ \big| \big\<\omega^N_t , g \big\> \big|^p \big]\leq C_p \|g\|_\infty^p, \quad \E\big[ \big\|\omega^N_t \big\|_{H^{-1-\delta}}^p \big] \leq C_{p,\delta}.$$
Moreover,
  $$\E\big[ \big\<\omega^N_t \otimes \omega^N_t, f \big\>^2 \big]= \frac3N \! \int\! f^2(x,x)\,\d x +\frac{N-1}N \bigg[\int f(x,x)\,\d x \bigg]^2 + \frac{2(N-1)}N \! \int\!\int f^2(x,y)\,\d x\d y.$$
\end{lemma}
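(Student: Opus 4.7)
The plan is to exploit the stationarity statement from Proposition \ref{prop-stationary}: since $\omega^N_t$ has law $\mu_N^0$ at every $t$, every quantity in the lemma has the same distribution as at $t=0$, where $\omega^N_0= N^{-1/2}\sum_{n=1}^N \xi_n\delta_{X^n_0}$ with $(\xi_n)$ i.i.d.\ $N(0,1)$ and $(X^n_0)$ i.i.d.\ uniform on $\T^2$, and the two families independent. All estimates then reduce to conditionally-Gaussian computations, first conditioning on the positions $(X^n_0)$ and exploiting independence of the $\xi_n$'s.

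For $\langle\omega^N_0,g\rangle=N^{-1/2}\sum_n \xi_n g(X^n_0)$, conditionally on the positions this is a centered Gaussian with variance $\sigma_N^2=N^{-1}\sum_n g(X^n_0)^2\le \|g\|_\infty^2$, and the standard Gaussian moment formula $\E[|Z|^p]\le C_p\sigma^p$ together with taking expectation in $(X^n_0)$ yields the bound $\E|\langle\omega^N_t,g\rangle|^p\le C_p\|g\|_\infty^p$. For the bilinear quantity I would split
$$\langle\omega^N_0\otimes\omega^N_0,f\rangle =\underbrace{\tfrac1N\sum_{i}\xi_i^2 f(X^i_0,X^i_0)}_{\text{diagonal}} +\underbrace{\tfrac1N\sum_{i\ne j}\xi_i\xi_j f(X^i_0,X^j_0)}_{\text{off-diagonal}}.$$
The diagonal part is a normalized sum of i.i.d.\ nonnegative terms bounded by $\|f\|_\infty\xi_i^2$, so by Jensen's inequality $(N^{-1}\sum_i Y_i)^p\le N^{-1}\sum_i Y_i^p$ its $p$-th moment is at most $\|f\|_\infty^p\,\E[\xi_1^{2p}]$. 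The off-diagonal part is, conditional on $(X^n_0)$, a centered Gaussian chaos of order~$2$ in the $\xi_n$; Gaussian hypercontractivity gives $\|\cdot\|_{L^p(\xi)}\le C_p\|\cdot\|_{L^2(\xi)}$, while a direct pairing computation with $\E[\xi_i\xi_j\xi_k\xi_l]\in\{0,1\}$ and the symmetry of $f$ produces $\E[|\text{off-diag}|^2\mid X]=\tfrac{2}{N^2}\sum_{i\ne j}f(X^i_0,X^j_0)^2\le 2\|f\|_\infty^2$. Combining the two pieces gives the $L^p$ bound.

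The explicit variance formula uses the same decomposition but now \emph{with} the expectation over $(X^n_0)$ carried out, using that the $X^n_0$ are i.i.d.\ uniform. The cross term $\E[\text{diag}\cdot\text{off-diag}]$ vanishes because it involves odd-order Gaussian monomials in the $\xi_n$; the diagonal contributes $\tfrac1{N^2}(2\sum_i \E f(X^i,X^i)^2+(\sum_i \E f(X^i,X^i))^2\,\text{cross terms})$, which after averaging yields $\tfrac1N\int f^2(x,x)\,dx +\tfrac{N-1}{N}(\int f(x,x)\,dx)^2+\tfrac{2}{N}\int f^2(x,x)\,dx$; the off-diagonal contributes $\tfrac{2(N-1)}{N}\iint f^2(x,y)\,dx\,dy$. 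Adding the three pieces gives exactly the stated identity. Finally, for the Sobolev-norm estimate I use the Fourier characterization $\|\omega\|_{H^{-1-\delta}}^2=\sum_{k\in\Z^2}(1+|k|^2)^{-1-\delta}|\langle\omega,e_k\rangle|^2$ with $e_k(x)=e^{2\pi\mathrm{i} k\cdot x}$; Minkowski's inequality in $L^{p/2}$ (for $p\ge 2$) together with the already proved bound $\E|\langle\omega^N_t,e_k\rangle|^p\le C_p\|e_k\|_\infty^p=C_p$ yields
$$\bigl(\E\|\omega^N_t\|_{H^{-1-\delta}}^p\bigr)^{2/p}\le C_p\sum_{k\in\Z^2}(1+|k|^2)^{-1-\delta}=:C_{p,\delta}<\infty,$$
since the series converges in dimension two for every $\delta>0$. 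The case $p<2$ follows by Jensen.

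No essential obstacle is anticipated; the only delicate point is the Gaussian-chaos control of the off-diagonal term and the careful bookkeeping of combinatorial factors needed to reproduce the three summands of the variance identity exactly.
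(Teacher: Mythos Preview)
Your proposal is correct and follows the natural route: reduce to $t=0$ by stationarity, then exploit the conditionally Gaussian structure in $(\xi_n)$ given the positions, together with elementary combinatorics for the variance identity. The paper itself does not give a proof of this lemma but simply cites \cite[Lemma 23]{Flandoli}; your argument is precisely the kind of computation carried out there, so there is nothing substantively different to compare.

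One minor remark on presentation: your description of the diagonal contribution to $\E[D^2]$ is written a bit loosely (the ``$2\sum_i\ldots + (\sum_i\ldots)^2$'' line does not parse as it stands), but the intended split into the $i=j$ terms (using $\E[\xi_i^4]=3$) and the $i\neq j$ terms (using $\E[\xi_i^2\xi_j^2]=1$) is clear and gives the correct answer $\tfrac{3}{N}\int f^2(x,x)\,\d x + \tfrac{N-1}{N}\big(\int f(x,x)\,\d x\big)^2$. Everything else is in order.
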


\subsection{Stochastic point vortices with general initial distribution} \label{sect-general}

In this part,  we shall consider stochastic point vortex dynamics \eqref{stoch-point-vortex} with more general initial distribution. Recall the definitions of $\lambda_N^0$, $\mu_N^0$ and $\mathcal T_N: (\R\times \T^2)^N \to \mathcal M_N(\T^2)$ in Section \ref{sect-white-noise}. The next lemma is taken from \cite[Lemma 29]{Flandoli}.

\begin{lemma}\label{lem-change-variable}
Let $\rho: H^{-1-}(\T^2) \to [0,\infty)$ be a measurable function with $\int_{H^{-1-}} \rho(\omega)\,\mu_N^0(\d \omega) <\infty$. Under the mapping $\mathcal T_N$, the measure $\lambda_N^\rho = (\rho\circ \mathcal T_N)\, \lambda_N^0$ has the image measure $\mu_N^\rho = \rho\, \mu_N^0$.
\end{lemma}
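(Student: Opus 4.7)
The proof plan is purely a change-of-variables computation, since $\mu_N^0$ is by definition the pushforward $(\mathcal T_N)_\# \lambda_N^0$ and $\lambda_N^\rho$ is obtained from $\lambda_N^0$ by multiplying its density by $\rho \circ \mathcal T_N$. I would test the image measure $(\mathcal T_N)_\# \lambda_N^\rho$ against an arbitrary bounded measurable function $F: H^{-1-}(\T^2) \to \R$ and verify that it agrees with $\rho \, \mu_N^0$.

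Concretely, by the definition of pushforward and the definition of $\lambda_N^\rho$,
\[
\int_{H^{-1-}} F(\omega)\, d\bigl((\mathcal T_N)_\# \lambda_N^\rho\bigr)(\omega) = \int_{(\R\times\T^2)^N} F(\mathcal T_N(z))\,\rho(\mathcal T_N(z))\, d\lambda_N^0(z),
\]
where I write $z = ((\xi_1, X^1_0),\ldots,(\xi_N, X^N_0))$. Then I would regroup the integrand as $(F\rho)\circ \mathcal T_N$ and apply the pushforward identity $\mu_N^0 = (\mathcal T_N)_\# \lambda_N^0$ in the reverse direction to obtain
\[
\int_{(\R\times\T^2)^N} (F\rho)\circ \mathcal T_N\, d\lambda_N^0 = \int_{H^{-1-}} F(\omega)\,\rho(\omega)\, d\mu_N^0(\omega).
\]
Since $F$ is arbitrary bounded measurable, this gives $(\mathcal T_N)_\# \lambda_N^\rho = \rho\, \mu_N^0 = \mu_N^\rho$.

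There is essentially no obstacle: the integrability assumption $\int \rho\, d\mu_N^0 < \infty$ ensures that $\rho \circ \mathcal T_N$ is $\lambda_N^0$-integrable (by the same pushforward identity applied to $\rho$), so all the integrals above are well-defined and finite when $F$ is bounded, justifying the use of the change-of-variables formula. No measurability subtleties arise because $\mathcal T_N$ is continuous from $(\R \times \T^2)^N$ into $H^{-1-}(\T^2)$, so $F \circ \mathcal T_N$ and $\rho \circ \mathcal T_N$ are Borel measurable whenever $F$ and $\rho$ are. In short, the lemma is a direct unwinding of the definitions, and the whole proof is one line once the pushforward identity for $\mu_N^0$ is recalled.
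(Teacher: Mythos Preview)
Your proposal is correct and follows essentially the same approach as the paper's own proof: both compute $\int F\,\d\bigl((\mathcal T_N)_\#\lambda_N^\rho\bigr)$ by unwinding the definitions and applying the change-of-variables identity $\mu_N^0 = (\mathcal T_N)_\#\lambda_N^0$. The only cosmetic difference is that the paper tests against non-negative measurable $F$ while you use bounded measurable $F$, and you add explicit remarks on measurability and integrability that the paper omits.
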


\begin{proof}
We denote by $(a,x)$ a typical element in $(\R\times \T^2)^N= \R^N \times (\T^2)^N$, where $a=(a_1,\ldots, a_N)\in \R^N$, $x=(x_1,\ldots, x_N)\in (\T^2)^N$. For every non-negative measurable function $F$, the change-of-variable formula yields
  \begin{align*}
  \int_{H^{-1-}(\T^2)} F(\omega)\, \mu_N^\rho(\d \omega) &= \int_{(\R\times \T^2)^N} F(\mathcal T_N(a, x)) \, \lambda_N^\rho (\d a, \d x)\\
  &= \int_{(\R\times \T^2)^N} F(\mathcal T_N(a,x)) \rho(\mathcal T_N(a,x))\, \lambda_N^0 (\d a,\d x)\\
  &= \int_{H^{-1-}(\T^2)} F(\omega) \rho(\omega)\, \mu_N^0(\d\omega). \qedhere
  \end{align*}
\end{proof}

Given $\rho_0\in C_b \big( H^{-1-}(\T^2) \big)$, $\rho_0 \geq 0$ and $\int \rho_0\,\d\mu =1$ ($\mu$ is the white noise Gaussian law on $H^{-1-}(\T^2)$), there is a normalizing constant $C_N>0$ such that $C_N \int \rho_0\,\d\mu_N^0 =1$. Since $\mu_N^0$ converges weakly to $\mu$ on $H^{-1-}(\T^2)$ by Proposition \ref{prop-weak-convergence}, we deduce that $\lim_{N\to \infty} C_N=1$. Let us consider the probability measure $C_N\, (\rho_0 \circ\mathcal T_N)\, \lambda_N^0$ on Borel sets of $(\R\times \T^2)^N$. By Lemma \ref{lem-change-variable} its image measure on $H^{-1-}(\T^2)$ under the map $\mathcal T_N$ is $C_N\, \rho_0\, \mu_N^0$. Recall that the stochastic point vortex dynamics \eqref{stoch-point-vortex} is well defined for $\lambda_N^0$-a.e. $\big( (\xi_1, X^1_0), \ldots, (\xi_N, X^N_0)\big) \in (\R\times \T^2)^N$. Hence it is well defined for a.e. $\big( (\xi_1, X^1_0), \ldots, (\xi_N, X^N_0)\big) \in (\R\times \T^2)^N$ with respect to $C_N \, (\rho_0\circ\mathcal T_N)\, \lambda_N^0$. Denote by $\omega^{N}_{\rho_0, t}$ the vorticity of this point vortex dynamics; the law of $\omega^{N}_{\rho_0, 0}$ on $\mathcal M_N(\T^2)\subset H^{-1-}(\T^2)$ is $C_N \, \rho_0\, \mu_N^0$.

\begin{lemma}\label{lem-law-point-vortices}
For any non-negative measurable function $F$ on $H^{-1-}(\T^2)$, one has
  $$\E\big[ F\big( \omega^{N}_{\rho_0, t} \big)\big] \leq C_N \|\rho_0\|_\infty \int_{\mathcal M_N(\T^2)} F(\omega)\, \mu_N^0 (\d\omega).$$
In particular, the law of $\omega^{N}_{\rho_0, t}$ on $\mathcal M_N(\T^2)$ has a density $\rho^N_t$ w.r.t. $\mu_N^0$.
\end{lemma}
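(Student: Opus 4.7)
The plan is to condition on the intensities $\xi=(\xi_1,\ldots,\xi_N)$ — which are frozen by the dynamics \eqref{stoch-point-vortex} — and to invoke the $\lambda_N$-invariance of the position flow proved in Theorem \ref{thm-invariance}, combined with the pointwise bound $\rho_0\leq\|\rho_0\|_\infty$. After identifying $(\R\times\T^2)^N$ with $\R^N\times(\T^2)^N$, I would write $\lambda_N^0=\gamma_N\otimes\lambda_N$, where $\gamma_N$ is the standard Gaussian on $\R^N$ and $\lambda_N=\Leb_{\T^2}^{\otimes N}$. The initial data $(\xi,X_0)$ then has law $C_N(\rho_0\circ\mathcal T_N)\lambda_N^0$, a measure dominated by $C_N\|\rho_0\|_\infty\,\lambda_N^0$.

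Next, denote by $\varphi_t$ the stochastic flow on $(\T^2)^N$ produced by \eqref{stoch-point-vortex} for the given value of $\xi$, so that the position vector satisfies $X_t^N=\varphi_t(X_0)$ and $\omega_{\rho_0,t}^N=\mathcal T_N(\xi,\varphi_t(X_0))$. Setting $H(\xi,y):=F(\mathcal T_N(\xi,y))$ and using independence of $(\xi,X_0)$ from the Brownian motions, Fubini gives
$$\E\big[F\big(\omega_{\rho_0,t}^N\big)\big]=\int_{\R^N}\gamma_N(\d\xi)\int_{(\T^2)^N}\E\big[H(\xi,\varphi_t(x))\big]\,C_N\rho_0(\mathcal T_N(\xi,x))\,\lambda_N(\d x).$$
Bounding $C_N\rho_0(\cdot)\leq C_N\|\rho_0\|_\infty$ and then using the $\lambda_N$-invariance of the flow from Theorem \ref{thm-invariance} pointwise in $\xi$, the inner integral becomes $\int H(\xi,y)\lambda_N(\d y)$. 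Re-integrating against $\gamma_N$ and applying Lemma \ref{lem-change-variable} with $\rho\equiv 1$ identifies the resulting expression with $\int F(\omega)\,\mu_N^0(\d\omega)$, yielding the claimed bound.

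For the second assertion, applying this inequality to $F=\mathbf 1_A$ for Borel sets $A\subset\mathcal M_N(\T^2)$ with $\mu_N^0(A)=0$ shows that the law of $\omega_{\rho_0,t}^N$ is absolutely continuous with respect to $\mu_N^0$; Radon--Nikodym then produces a density $\rho_t^N$, and the same inequality forces $\rho_t^N\leq C_N\|\rho_0\|_\infty$ $\mu_N^0$-almost everywhere. The main point requiring care is the transfer from the deterministic-$\xi$ form of Theorem \ref{thm-invariance} to a $\gamma_N$-a.e. statement in $\xi$; this is harmless because the proof of Theorem \ref{thm-invariance} only uses the divergence-free structure of drift and noise, valid for every $\xi\in\R^N$, and the $\lambda_N^0$-null set on which the flow fails to exist is absorbed by the bounded density of the initial law relative to $\lambda_N^0$.
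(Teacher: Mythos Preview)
Your proof is correct and follows essentially the same approach as the paper: bound the initial density pointwise by $C_N\|\rho_0\|_\infty$ and invoke invariance of the reference measure under the flow. The only cosmetic difference is that the paper first defines a measure-level flow $\Phi^N_t$ on $\mathcal M_N(\T^2)$ (after a short permutation-invariance remark) and runs the computation there, whereas you stay on $(\R\times\T^2)^N$ by conditioning on $\xi$ and only push forward via $\mathcal T_N$ at the end; this sidesteps the $N!$-preimage discussion but is otherwise the same argument.
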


\begin{proof}
For a given $\omega\in \mathcal M_N(\T^2)$, it corresponds to $N!$ different elements $(a,x)\in (\R\times \T^2)^N$. These elements differ from each other by a permutation. However, by changing accordingly the order of the equations in the system \eqref{stoch-point-vortex}, the solutions give rise to the same random measure-valued vorticity field at any time $t>0$. Thus, there exists a unique stochastic process $\Phi^N_t(\omega)$ associated to the system \eqref{stoch-point-vortex}, which is well defined for $\mu_N^0$-a.e. $\omega\in \mathcal M_N(\T^2)$. For any nonnegative measurable function $F: \mathcal M_N(\T^2) \to \R_+$, by the last assertion of Proposition \ref{prop-stationary},
  \begin{equation}\label{lem-law-point-vortices.1}
  \E \int_{\mathcal M_N(\T^2)} F\big(\Phi^N_t(\omega) \big) \, \mu_N^0(\d\omega)= \int_{\mathcal M_N(\T^2)} F(\omega)\, \mu_N^0(\d\omega).
  \end{equation}

Now, note that $\omega^{N}_{\rho_0, t}= \Phi^N_t\big(\omega^{N}_{\rho_0, 0} \big)$ where $\omega^{N}_{\rho_0, 0}$ is distributed as  $C_N \, \rho_0\, \mu_N^0$. Therefore,
  $$\aligned
  \E\big[ F\big( \omega^{N}_{\rho_0, t} \big)\big] &= \E\big[ F\big( \Phi^N_t \big(\omega^{N}_{\rho_0, 0} \big) \big)\big] = \int_{\mathcal M_N(\T^2)} \E\big[ F\big( \Phi^N_t (\omega) \big)\big] C_N \, \rho_0(\omega)\, \mu_N^0(\d\omega)\\
  &\leq C_N \|\rho_0\|_\infty \int_{\mathcal M_N(\T^2)} \E\big[ F\big( \Phi^N_t (\omega) \big)\big] \, \mu_N^0(\d\omega)\\
  &= C_N \|\rho_0\|_\infty \int_{\mathcal M_N(\T^2)} F(\omega)\, \mu_N^0(\d\omega),
  \endaligned$$
where the last equality follows from \eqref{lem-law-point-vortices.1}.
\end{proof}

We have the following useful estimates.

\begin{corollary}\label{cor-moment-estimate}
Assume $f:\T^2 \times \T^2\to \R$ and $g:\T^2 \to \R$ are bounded and measurable, and $f$ is symmetric. Then for any $p\geq 1$ and $\delta \in (0,1)$,  there exist $C_{\rho_0, p}, C_{\rho_0, p ,\delta}>0$ such that for all $t\in [0,T]$,
  $$\aligned
  \E\big[ \big|\big\< \omega^N_{\rho_0,t} \otimes \omega^N_{\rho_0,t}, f\big\>\big|^p \big] &\leq C_{\rho_0, p} \|f\|_\infty^p,\\
  \E\big[ \big|\big\< \omega^N_{\rho_0,t} , g\big\>\big|^p \big] &\leq C_{\rho_0, p} \|g\|_\infty^p,\\
  \E\big[ \big\|\omega^N_{\rho_0,t} \big\|_{H^{-1-\delta}}^p \big] &\leq C_{\rho_0, p ,\delta}.
  \endaligned$$

\end{corollary}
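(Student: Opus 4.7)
The proof plan is a direct application of Lemma \ref{lem-law-point-vortices} combined with the integrability estimates of Lemma \ref{2-lem-integrability}. The idea is that $\omega^N_{\rho_0,t}$ has a law which is absolutely continuous with respect to $\mu_N^0$ at every time, with a Radon--Nikodym derivative that is pointwise bounded, so any moment bound available under $\mu_N^0$ transfers, up to a multiplicative constant, to $\omega^N_{\rho_0,t}$.

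First I would apply Lemma \ref{lem-law-point-vortices} with the three non-negative measurable functions
$$F_1(\omega) = |\langle \omega \otimes \omega, f\rangle|^p, \qquad F_2(\omega) = |\langle \omega, g\rangle|^p, \qquad F_3(\omega) = \|\omega\|_{H^{-1-\delta}}^p.$$
This yields
$$\E\bigl[F_i(\omega^N_{\rho_0,t})\bigr] \leq C_N \|\rho_0\|_\infty \int_{\mathcal M_N(\T^2)} F_i(\omega)\, \mu_N^0(\d\omega), \qquad i=1,2,3.$$
Next I would identify the right-hand side: by Proposition \ref{prop-stationary}, the law of $\omega^N_t$ (the stationary point-vortex vorticity with initial distribution $\lambda_N^0$) on $\mathcal M_N(\T^2)$ is exactly $\mu_N^0$ at every time $t \geq 0$. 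Hence
$$\int_{\mathcal M_N(\T^2)} F_i(\omega)\,\mu_N^0(\d\omega) = \E\bigl[F_i(\omega^N_t)\bigr],$$
and Lemma \ref{2-lem-integrability} bounds these three quantities by $C_p \|f\|_\infty^p$, $C_p \|g\|_\infty^p$, and $C_{p,\delta}$ respectively.

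Finally I would handle the prefactor $C_N$. Since $\mu_N^0$ converges weakly to $\mu$ on $H^{-1-}(\T^2)$ by Proposition \ref{prop-weak-convergence} and $\rho_0 \in C_b(H^{-1-}(\T^2))$ with $\int \rho_0\,\d\mu = 1$, the normalizing constants $C_N = 1/\int \rho_0\,\d\mu_N^0$ satisfy $C_N \to 1$, hence $C^* := \sup_{N\in \N} C_N < \infty$. Setting $C_{\rho_0,p} := C^*\|\rho_0\|_\infty C_p$ and $C_{\rho_0,p,\delta} := C^*\|\rho_0\|_\infty C_{p,\delta}$ gives the desired estimates uniformly in $N$ and in $t\in[0,T]$. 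There is no genuine obstacle here; the whole argument is a one-line comparison of densities followed by invocation of previously established uniform moment bounds, and no delicate analysis of the point-vortex dynamics is needed beyond what has already been proved.
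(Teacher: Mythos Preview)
Your proposal is correct and matches the paper's proof essentially line for line: apply Lemma \ref{lem-law-point-vortices} to the three nonnegative functionals, invoke the uniform moment bounds of Lemma \ref{2-lem-integrability} under $\mu_N^0$, and absorb the prefactor $C_N$ using $\sup_N C_N<\infty$ from $C_N\to 1$. The only cosmetic difference is that the paper writes out the argument for the first estimate and says the other two follow in the same way, whereas you treat all three in parallel.
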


\begin{proof}
Since $\lim_{N\to \infty} C_N=1$, we have $C_0 =\sup_{N\geq 1} C_N <\infty$. Applying Lemma \ref{lem-law-point-vortices} with $F(\omega) = |\< \omega \otimes \omega, f \>|^p$, then  we deduce the first result from the estimate in Lemma \ref{2-lem-integrability} with $C_{\rho_0, p} = C_0 \|\rho_0\|_\infty C_{p}$. The last two estimates follow in the same way.
\end{proof}

\section{Proof of Theorem \ref{thm-main-result}}

For simplification of notations, we shall write in the sequel $\omega^N_t$ instead of $\omega^{N}_{\rho_0, t}$ given in Section \ref{sect-general}, since $\rho_0$ is fixed.

The difference of the proof, compared to that of \cite[Theorem 24]{Flandoli}, is that the process $\<\omega^N_t, \phi\>$ does not have differentiable trajectories, hence we shall use fractional Sobolev spaces and apply another compactness criterion proved in \cite[p. 90, Corollary 9]{Simon}. We state it here in our context.

Take $\delta\in (0,1)$ and $\kappa>5$ (this choice is due to estimates below) and consider the spaces
  $$X=H^{-1-\delta/2}(\T^2),\quad B=H^{-1-\delta}(\T^2),\quad Y=H^{-\kappa}(\T^2).$$
Then $X\subset B\subset Y$ with compact embeddings and we also have, for a suitable constant $C>0$ and for
  \begin{equation}\label{eq-theta}
  \theta= \frac{\delta/2}{\kappa -1-\delta/2},
  \end{equation}
the interpolation inequality
  $$\|\omega\|_B \leq C \|\omega\|_X^{1-\theta} \|\omega\|_Y^\theta,\quad \omega\in X.$$
These are the preliminary assumptions of \cite[p. 90, Corollary 9]{Simon}. We consider here a particular case:
  $$\mathcal S= L^{p_0}(0,T; X)\cap W^{1/3,4}(0,T; Y),$$
where for $0< \alpha <1$ and $p\geq 1$,
  $$W^{\alpha,p}(0,T; Y)=\bigg\{f: \, f\in L^p(0,T; Y) \mbox{ and } \int_0^T\! \int_0^T \frac{\|f(t)-f(s)\|_Y^p}{|t-s|^{\alpha p+1}}\,\d t\d s <\infty\bigg\}.$$

\begin{lemma}\label{lem-embedding}
Let $\delta\in (0,1)$ and $\kappa>5$ be given. If
  $$p_0> \frac{12(\kappa -1-3\delta/2)}\delta,$$
then $\mathcal S$ is compactly embedded into $C\big([0,T], H^{-1-\delta}(\T^2) \big)$.
\end{lemma}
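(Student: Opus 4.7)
The plan is to verify the hypotheses of the compactness criterion of \cite[p.~90, Corollary~9]{Simon} with the triple $X=H^{-1-\delta/2}(\T^2)$, $B=H^{-1-\delta}(\T^2)$, $Y=H^{-\kappa}(\T^2)$. Two analytical inputs are needed before the exponent book-keeping: the compactness of the embedding $X\hookrightarrow B$, and the interpolation inequality $\|\omega\|_B\le C\|\omega\|_X^{1-\theta}\|\omega\|_Y^\theta$ with $\theta$ as in \eqref{eq-theta}.

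First I would establish the compact embedding $H^{-1-\delta/2}(\T^2)\hookrightarrow H^{-1-\delta}(\T^2)$; via the Fourier representation on the torus the embedding operator is given, on the Fourier coefficients, by multiplication by $(1+|k|^2)^{-\delta/4}$, which is a compact multiplier on the relevant weighted $\ell^{2}$, i.e.\ a Rellich-type result on the compact manifold $\T^2$. Next I would prove the three-space interpolation inequality by a direct spectral computation: for any Sobolev exponents $s_0>s_1>s_2$ with $s_1=(1-\theta)s_0+\theta s_2$, H\"older's inequality applied coefficient-wise in the Fourier expansion yields $\|\omega\|_{H^{s_1}}\le \|\omega\|_{H^{s_0}}^{1-\theta}\|\omega\|_{H^{s_2}}^\theta$; specialising to $s_0=-1-\delta/2$, $s_1=-1-\delta$, $s_2=-\kappa$ and solving for $\theta$ recovers precisely the formula \eqref{eq-theta}.

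Once these structural inputs are in place, Simon's criterion reduces the desired compactness $\mathcal S\hookrightarrow C([0,T],B)$ to an exponent balance between $p_0$, the fractional time indices $s=1/3$ and $r=4$, and $\theta$. The underlying mechanism is that $W^{1/3,4}(0,T;Y)\hookrightarrow C^{0,1/12}([0,T];Y)$ (since $sr=4/3>1$, with H\"older exponent $s-1/r=1/12$), so that elements of $\mathcal S$ already have H\"older $Y$-valued paths, while the $L^{p_0}(0,T;X)$ bound supplies the extra integrability needed to transfer this regularity to $B$ through the interpolation inequality. Carrying out the book-keeping with $\theta=\delta/(2\kappa-2-\delta)$ and $s-1/r=1/12$, the threshold required by Simon's criterion translates into the displayed inequality $p_0>12(\kappa-1-3\delta/2)/\delta$.

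The main obstacle is the careful matching of the fractional exponents in Simon's criterion, rather than any conceptual difficulty: the compact embedding and the interpolation inequality are both routine on the torus, and the $W^{s,r}\hookrightarrow C^{0,s-1/r}$ embedding in time is standard. The standing condition $\kappa>5$ is imposed for other estimates in the paper (it appears in the bounds used to place $\omega^N$ in $L^{p_0}(0,T;X)\cap W^{1/3,4}(0,T;Y)$), but plays no delicate role in the compactness argument itself.
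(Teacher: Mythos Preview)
Your proposal is correct and follows essentially the same approach as the paper: verify the compact embedding $X\hookrightarrow B$ and the interpolation inequality (which the paper records as preliminary assumptions just before the lemma), then apply \cite[p.~90, Corollary~9]{Simon} with $s_0=0,\ r_0=p_0,\ s_1=1/3,\ r_1=4$ and check the exponent condition $s_\theta>1/r_\theta$. Your additional remark on the embedding $W^{1/3,4}(0,T;Y)\hookrightarrow C^{0,1/12}([0,T];Y)$ is just a restatement of the mechanism behind Simon's criterion and does not depart from the paper's argument.
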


\begin{proof}
Recall that $\theta$ is defined in \eqref{eq-theta}. In our case, we have $s_0=0, r_0=p_0$ and $s_1=1/3, r_1=4$. Hence $s_\theta = (1-\theta)s_0 +\theta s_1= \theta/3$ and
  $$\frac1{r_\theta} = \frac{1-\theta}{r_0} + \frac\theta{r_1} = \frac{1-\theta}{p_0} + \frac\theta 4.$$
It is clear that for $p_0$ given above, it holds $s_\theta> 1/r_\theta$, thus the desired result follows from the second assertion of \cite[Corollary 9]{Simon}.
\end{proof}

For $N\geq 1$, let $Q^N$ be the law of $\omega^N_\cdot$ on $\mathcal X:= C\big([0,T], H^{-1-}(\T^2) \big)$. We want to prove that the family $\big\{Q^N\big\}_{N\geq 1}$ is tight in $\mathcal X$.

\begin{lemma}\label{lem-tight}
The family $\big\{Q^N\big\}_{N\geq 1}$ is tight in $\mathcal X$ if and only if it is tight in $C\big([0,T], H^{-1-\delta}(\T^2) \big)$ for any $\delta>0$.
\end{lemma}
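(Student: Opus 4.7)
The plan is to imitate the diagonal compactness argument from Step~1 of Proposition~\ref{prop-weak-convergence}, now applied to trajectories in $C([0,T];\cdot)$ spaces instead of to distributions at a single time. The key fact underpinning the argument is that $H^{-1-}(\T^2)=\bigcap_{\delta>0}H^{-1-\delta}(\T^2)$ carries the projective limit topology of the spaces $H^{-1-1/n}(\T^2)$, and consequently $\mathcal X=C\big([0,T],H^{-1-}(\T^2)\big)$ inherits the analogous projective limit structure: a sequence converges in $\mathcal X$ iff it converges uniformly on $[0,T]$ in every $H^{-1-1/n}(\T^2)$.

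The forward direction $(\Rightarrow)$ is straightforward. The natural inclusion $\mathcal X\hookrightarrow C\big([0,T],H^{-1-\delta}(\T^2)\big)$ is continuous for every $\delta>0$, so it sends compact sets to compact sets, and tightness is preserved under continuous mappings. For the converse, I would fix $\varepsilon>0$ and, for each $n\geq 1$, use tightness in $C\big([0,T],H^{-1-1/n}(\T^2)\big)$ to select a compact subset $K_{\varepsilon,n}$ with $Q^N(K_{\varepsilon,n})\geq 1-\varepsilon/2^n$ uniformly in $N$. Setting $K_\varepsilon=\bigcap_{n\geq 1} K_{\varepsilon,n}$, a union bound would give $\inf_N Q^N(K_\varepsilon)\geq 1-\varepsilon$, and all that would remain is to verify that $K_\varepsilon$ is sequentially compact in $\mathcal X$.

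The main (and essentially only) non-trivial step will be this compactness verification, which I plan to carry out exactly as in Proposition~\ref{prop-weak-convergence}. Given a sequence $\{f_k\}\subset K_\varepsilon$, I would extract inductively nested subsequences $\{f_{m,k}\}_{k}$ converging in $C\big([0,T],H^{-1-1/m}(\T^2)\big)$ to some limit $f_{m,0}$, then use uniqueness of limits in the weaker norms to identify all $f_{m,0}$ with a common $f_0\in K_\varepsilon$, and finally take the diagonal $\{f_{k,k}\}$, which would converge to $f_0$ in every $C\big([0,T],H^{-1-1/m}(\T^2)\big)$ and hence in the projective limit topology of $\mathcal X$. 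A minor side point is the compatibility of the Borel structures on $\mathcal X$ and on each $C\big([0,T],H^{-1-1/n}(\T^2)\big)$, but this is automatic from continuity of the inclusions, so the inequalities $Q^N(K_{\varepsilon,n})\geq 1-\varepsilon/2^n$ make sense for $Q^N$ interpreted either on $\mathcal X$ or on the larger ambient space.
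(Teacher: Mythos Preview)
Your proposal is correct and follows precisely the approach the paper intends: the paper's proof of this lemma consists of the single sentence ``The proof is similar to Step~1 of the proof of Proposition~\ref{prop-weak-convergence}'', and your plan is exactly that adaptation, replacing single-time distributions by trajectories in $C([0,T];\cdot)$. There is no substantive difference between your argument and the paper's.
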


The proof is similar to Step 1 of the proof of Proposition \ref{prop-weak-convergence}. In view of the above two lemmas, it is sufficient to prove that  $\big\{Q^N\big\}_{N\geq 1}$ is bounded in probability in $W^{1/3,4}\big(0,T; H^{-\kappa}(\T^2) \big)$ and in each $L^{p_0}\big(0,T; H^{-1-\delta}(\T^2) \big)$ for any $p_0>0$ and $\delta>0$.

First we show that the family $\big\{Q^N\big\}_{N\geq 1}$ is bounded in probability in $L^{p_0}\big(0,T; H^{-1-\delta}(\T^2) \big)$. We have by Corollary \ref{cor-moment-estimate} that
  \begin{equation}\label{sec-3.1}
  \E\bigg[\int_0^T \big\|\omega^N_t \big\|_{H^{-1-\delta}}^{p_0} \,\d t\bigg] = \int_0^T \E\big[ \big\|\omega^N_t \big\|_{H^{-1-\delta}}^{p_0} \big]\,\d t \leq C_{\rho_0, p_0, \delta} T,\quad \mbox{for all } N\geq 1.
  \end{equation}
By Chebyshev's inequality, we obtain the boundedness in probability of the family $\big\{Q^N\big\}_{N\geq 1}$ in $L^{p_0}\big(0,T; H^{-1-\delta}(\T^2) \big)$.

Next, we prove the boundedness in probability in $W^{1/3,4}\big(0,T; H^{-\kappa}(\T^2) \big)$. Again by the Chebyshev inequality, it suffices to show that
  $$\sup_{N\geq 1} \E \bigg[\int_0^T \big\|\omega^N_t \big\|_{H^{-\kappa}}^4 \,\d t +\int_0^T\! \int_0^T \frac{\big\| \omega^N_t- \omega^N_s \big\|_{H^{-\kappa}}^4} {|t-s|^{7/3}}\,\d t\d s\bigg] <\infty. $$
In view of \eqref{sec-3.1}, we see that it is sufficient to establish a uniform estimate on the expectation $\E \big\|\omega^N_t- \omega^N_s \big\|_{H^{-\kappa}}^4$.

\begin{lemma}\label{lem-estimate}
Under the assumption {\rm \textbf{(H2)}}, for any $\phi\in C^\infty(\T^2)$, we have
  $$\E\big[ \big\<\omega^N_t- \omega^N_s, \phi \big\>^4 \big]\leq C (t-s)^2\big( \|\nabla \phi\|_\infty^4 + \|\nabla^2 \phi\|_\infty^4 \big).$$
\end{lemma}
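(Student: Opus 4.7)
The plan is to write $\<\omega_t^N-\omega_s^N,\phi\>$ as the sum of the three ingredients appearing in the weak vorticity formulation \eqref{prop-stationary.1} of Proposition \ref{prop-stationary},
\[
\<\omega_t^N-\omega_s^N,\phi\> = I^N_{s,t}(\phi) + M^N_{s,t}(\phi) + J^N_{s,t}(\phi),
\]
where $I^N_{s,t}(\phi)=\int_s^t\<\omega_r^N\otimes\omega_r^N,H_\phi\>\,\d r$ is the nonlinear drift, $M^N_{s,t}(\phi)=\sum_{j=1}^N\int_s^t\<\omega_r^N,\sigma_j\cdot\nabla\phi\>\,\d W^j_r$ is the martingale part, and $J^N_{s,t}(\phi)=\tfrac12\sum_{j=1}^N\int_s^t\<\omega_r^N,\sigma_j\cdot\nabla(\sigma_j\cdot\nabla\phi)\>\,\d r$ is the It\^o corrector. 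By the elementary inequality $(a+b+c)^4\leq 27(a^4+b^4+c^4)$, I would bound the fourth moment of each piece separately. Throughout, the moment estimates of Corollary \ref{cor-moment-estimate} on $\omega_r^N$ (uniform in $r\in[0,T]$ and $N\geq1$) will be the main analytic input.

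For the drift $I^N_{s,t}(\phi)$, the key observation is the cancellation hidden in $H_\phi$: since $|K(x-y)|\leq C/|x-y|$ near the diagonal while $|\nabla\phi(x)-\nabla\phi(y)|\leq\|\nabla^2\phi\|_\infty|x-y|$ by the mean value theorem, and both are bounded away from the diagonal, one obtains $\|H_\phi\|_\infty\leq C(\|\nabla\phi\|_\infty+\|\nabla^2\phi\|_\infty)$. Jensen's inequality in time and the first moment bound of Corollary \ref{cor-moment-estimate} (with $p=4$ and $f=H_\phi$, symmetric) then give
\[
\E\bigl[|I^N_{s,t}(\phi)|^4\bigr]\leq (t-s)^3\int_s^t\E\bigl[|\<\omega_r^N\otimes\omega_r^N,H_\phi\>|^4\bigr]\,\d r\leq C(t-s)^4(\|\nabla\phi\|_\infty^4+\|\nabla^2\phi\|_\infty^4).
\]

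For the martingale $M^N_{s,t}(\phi)$, I would apply the Burkholder--Davis--Gundy inequality to pass to the quadratic variation, then use Jensen in time and Cauchy--Schwarz over the index $j$, yielding
\[
\E\bigl[|M^N_{s,t}(\phi)|^4\bigr]\leq C\,\E\bigg[\bigg(\sum_{j=1}^N\int_s^t\<\omega_r^N,\sigma_j\cdot\nabla\phi\>^2\,\d r\bigg)^2\bigg]\leq C(t-s)\int_s^t\sum_{j,k}\E\bigl[\<\omega_r^N,\sigma_j\cdot\nabla\phi\>^2\<\omega_r^N,\sigma_k\cdot\nabla\phi\>^2\bigr]\d r.
\]
Applying Cauchy--Schwarz again in expectation and the second bound of Corollary \ref{cor-moment-estimate} with $g=\sigma_j\cdot\nabla\phi$ (so $\|g\|_\infty\leq\|\sigma_j\|_\infty\|\nabla\phi\|_\infty$), the double sum is controlled by $\bigl(\sum_j\|\sigma_j\|_\infty^2\bigr)^2\|\nabla\phi\|_\infty^4$, which is finite by \textbf{(H2)}. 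This yields $\E[|M^N_{s,t}(\phi)|^4]\leq C(t-s)^2\|\nabla\phi\|_\infty^4$ uniformly in $N$.

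For the It\^o corrector $J^N_{s,t}(\phi)$, expanding $\sigma_j\cdot\nabla(\sigma_j\cdot\nabla\phi)=(\sigma_j\cdot\nabla\sigma_j)\cdot\nabla\phi+\sigma_j\otimes\sigma_j:\nabla^2\phi$ gives
\[
\|\sigma_j\cdot\nabla(\sigma_j\cdot\nabla\phi)\|_\infty\leq\|\sigma_j\cdot\nabla\sigma_j\|_\infty\|\nabla\phi\|_\infty+\|\sigma_j\|_\infty^2\|\nabla^2\phi\|_\infty.
\]
Then Minkowski's inequality in $L^4(\P)$, applied to the sum over $j$ before estimating each term, converts the summation into the series $\sum_j\|\sigma_j\cdot\nabla(\sigma_j\cdot\nabla\phi)\|_\infty$, which is finite by \textbf{(H2)}; combined with Jensen in time and the scalar moment bound, this produces $\E[|J^N_{s,t}(\phi)|^4]\leq C(t-s)^4(\|\nabla\phi\|_\infty^4+\|\nabla^2\phi\|_\infty^4)$. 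Summing the three contributions and using $(t-s)^4\leq T^2(t-s)^2$ gives the claimed bound.

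The main obstacle is the drift estimate, specifically establishing the uniform bound $\|H_\phi\|_\infty\leq C(\|\nabla\phi\|_\infty+\|\nabla^2\phi\|_\infty)$ that tames the singularity of the Biot--Savart kernel by pairing it with the first-order vanishing of $\nabla\phi(x)-\nabla\phi(y)$; everything else is essentially a careful application of Corollary \ref{cor-moment-estimate} together with the summability conditions in \textbf{(H2)}.
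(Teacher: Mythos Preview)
Your proof is correct and follows essentially the same route as the paper's: the same three-term decomposition from \eqref{prop-stationary.1}, H\"older/Jensen in time plus Corollary~\ref{cor-moment-estimate} for the drift, BDG plus the double-sum Cauchy--Schwarz argument for the martingale, and the expansion of $\sigma_j\cdot\nabla(\sigma_j\cdot\nabla\phi)$ together with \textbf{(H2)} for the corrector. Your treatment of $\|H_\phi\|_\infty$ is in fact slightly more careful than the paper's (you correctly pick up the $\|\nabla\phi\|_\infty$ contribution from the region away from the diagonal, whereas the paper just writes $\|\nabla^2\phi\|_\infty^4$ in \eqref{lem-estimate-2}), but this is a cosmetic difference and both lead to the stated bound.
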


\begin{proof}
The equation \eqref{prop-stationary.1} holds for $\big(\omega^N_t \big)_{0\leq t\leq T}$, thus
  \begin{equation}\label{lem-estimate-1}
  \aligned
  \big\<\omega^N_t- \omega^N_s, \phi\big\> &= \int_s^t \big\<\omega^N_r\otimes \omega^N_r, H_\phi\big\>\, \d r +\sum_{j=1}^N \int_s^t \big\<\omega^N_r,\sigma_j \cdot \nabla \phi \big\>\,\d W^j_r\\
  &\hskip13pt + \frac12 \sum_{j=1}^N \int_s^t \big\<\omega^N_r,\sigma_j \cdot \nabla (\sigma_j \cdot \nabla \phi) \big\>\,\d r.
  \endaligned
  \end{equation}
First, H\"older's inequality leads to
  \begin{equation}\label{lem-estimate-2}
  \aligned
  \E\bigg[\bigg(\int_s^t \big\<\omega^N_r\otimes \omega^N_r, H_\phi\big\>\, \d r\bigg)^{\! 4} \bigg]
  &\leq (t-s)^3 \E\bigg[\int_s^t \big\<\omega^N_r\otimes \omega^N_r, H_\phi \big\>^4\, \d r\bigg]\\
  &\leq (t-s)^3 \int_s^t C \|H_\phi\|_\infty^4 \,\d r \leq C (t-s)^4  \|\nabla^2 \phi\|_\infty^4,
  \endaligned
  \end{equation}
where the second inequality follows from Corollary \ref{cor-moment-estimate}. Next, by Burkholder's inequality,
  \begin{equation*}
  \aligned
  \E\bigg[\bigg(\sum_{j=1}^N \int_s^t \big\<\omega^N_r,\sigma_j \cdot \nabla \phi \big\>\,\d W^j_r\bigg)^{\! 4} \bigg]
  &\leq C\E \bigg[\bigg(\int_s^t \sum_{j=1}^N \big\<\omega^N_r,\sigma_j \cdot \nabla \phi\big\>^2\,\d r\bigg)^{\! 2} \bigg]\\
  &\leq C(t-s) \int_s^t \E \bigg[\bigg(\sum_{j=1}^N \big\<\omega^N_r,\sigma_j \cdot \nabla \phi \big\>^2 \bigg)^{\! 2} \bigg] \d r.
  \endaligned
  \end{equation*}
We have by Cauchy's inequality and Corollary \ref{cor-moment-estimate} that
  $$  \aligned
  \E \bigg[\bigg(\sum_{j=1}^N \big\<\omega^N_r,\sigma_j \cdot \nabla \phi \big\>^2 \bigg)^{\! 2} \bigg]
  &= \sum_{j,k=1}^N \E \big[ \big\<\omega^N_r,\sigma_j \cdot \nabla \phi \big\>^2 \big\<\omega^N_r,\sigma_k \cdot \nabla \phi \big\>^2 \big] \\
  &\leq \sum_{j,k=1}^N \big[\E \big\<\omega^N_r,\sigma_j \cdot \nabla \phi\big\>^4\big]^{1/2} \big[\E \big\<\omega^N_r,\sigma_k \cdot \nabla \phi\big\>^4\big]^{1/2}\\
  &\leq C\bigg(\sum_{j=1}^N \|\sigma_j \cdot \nabla \phi\|_\infty^2\bigg)^{\! 2} \leq \tilde C \|\nabla \phi\|_\infty^4,
  \endaligned $$
where the last inequality follows from \textbf{(H2)}. Substituting this estimate into the above inequality yields
  \begin{equation}\label{lem-estimate-3}
  \E\bigg[\bigg(\sum_{j=1}^N \int_s^t \big\<\omega^N_r,\sigma_j \cdot \nabla \phi \big\>\,\d W^j_r\bigg)^{\! 4} \bigg] \leq C(t-s)^2 \|\nabla \phi\|_\infty^4.
  \end{equation}

Finally, by H\"older's inequality,
  $$\aligned
  \E\bigg[\bigg(\sum_{j=1}^N \int_s^t \big\<\omega^N_r,\sigma_j \cdot \nabla (\sigma_j \cdot \nabla \phi) \big\> \,\d r\bigg)^{\! 4} \bigg]
  &\leq (t-s)^3 \int_s^t \E \bigg[\bigg(\sum_{j=1}^N \big\<\omega^N_r, \sigma_j \cdot \nabla (\sigma_j \cdot \nabla \phi) \big\>\bigg)^{\! 4} \bigg] \d r\\
  &\leq (t-s)^3 \int_s^t \bigg[ \sum_{j=1}^N \Big( \E\, \big<\omega^N_r, \sigma_j \cdot \nabla (\sigma_j \cdot \nabla \phi) \big>^4 \Big)^{\frac14} \bigg]^4 \d r.
  \endaligned$$
Since
  $$\aligned \Big( \E \big<\omega^N_r, \sigma_j \cdot \nabla (\sigma_j \cdot \nabla \phi) \big>^4 \Big)^{\frac14}
  &\leq C \big\|\sigma_j \cdot \nabla (\sigma_j \cdot \nabla \phi) \big\|_\infty \\
  &\leq C \Big(\|\sigma_j\|_\infty^2 \|\nabla^2 \phi\|_\infty + \|\sigma_j\cdot \nabla\sigma_j\|_\infty \|\nabla \phi\|_\infty \Big),
  \endaligned $$
we have by \textbf{(H2)} that
  $$\aligned
  &\hskip13pt \E\bigg[\bigg(\sum_{j=1}^N \int_s^t \big\<\omega^N_r,\sigma_j \cdot \nabla (\sigma_j \cdot \nabla \phi) \big\>\,\d r\bigg)^{\! 4} \bigg] \\
  &\leq C (t-s)^3 \int_s^t \bigg[\sum_{j=1}^N \Big(\|\sigma_j\|_\infty^2 \|\nabla^2 \phi\|_\infty + \|\sigma_j\cdot \nabla\sigma_j\|_\infty \|\nabla \phi\|_\infty \Big) \bigg]^4 \,\d r\\
  &\leq C (t-s)^4 \big(\|\nabla^2 \phi\|_\infty + \|\nabla \phi\|_\infty \big)^4.
  \endaligned$$
Combining this estimate together with \eqref{lem-estimate-1}--\eqref{lem-estimate-3}, we obtain the desired estimate.
\end{proof}

Applying Lemma \ref{lem-estimate} with $\phi(x)= e_k(x)= {\rm e}^{2\pi {\rm i} k\cdot x}$ leads to
  $$\E\big[ \big| \big\<\omega^N_t- \omega^N_s, e_k \big\> \big|^4 \big] \leq C (t-s)^2 |k|^8, \quad k\in \Z^2_0 = \Z^2 \setminus \{0\}.$$
As a result, by Cauchy's inequality,
  $$\aligned
  \E \big( \big\|\omega^N_t- \omega^N_s \big\|_{H^{-\kappa}}^4 \big) &= \E\bigg[\bigg( \sum_k \big(1+|k|^2 \big)^{-\kappa} \big|\big\<\omega^N_t- \omega^N_s, e_k \big\> \big|^2 \bigg)^{\! 2} \bigg]\\
  &\leq \bigg(\sum_k \big(1+|k|^2 \big)^{-\kappa}\bigg) \sum_k \big(1+|k|^2 \big)^{-\kappa} \E \big[ \big|\big\<\omega^N_t- \omega^N_s, e_k \big\> \big|^4 \big]\\
  &\leq \tilde C (t-s)^2\sum_k \big(1+|k|^2 \big)^{-\kappa} |k|^8 \leq \hat C (t-s)^2,
  \endaligned$$
since $2\kappa -8 >2$ due to the choice of $\kappa$. Consequently,
  $$\E \bigg[\int_0^T\! \int_0^T \frac{ \big\|\omega^N_t- \omega^N_s \big\|_{H^{-\kappa}}^4} {|t-s|^{7/3}}\,\d t\d s\bigg] \leq \hat C \int_0^T\! \int_0^T \frac{|t-s|^2} {|t-s|^{7/3}}\,\d t\d s <\infty.$$
The proof of the boundedness in probability of $\big\{Q^N\big\}_{N\geq 1}$ in $W^{1/3,4}\big(0,T; H^{-\kappa}(\T^2) \big)$ is complete.

We have shown that the family $\big\{ Q^N\big\}_{N\in \N}$ is bounded in probability in $L^{p_0}\big(0,T; H^{-1-\delta/2} \big)\cap W^{1/3,4} \big(0,T; H^{-\kappa}\big)$ for any $p_0>0$ and $\delta>0$, hence it is tight in $C\big([0,T], H^{-1-\delta} \big)$ for any $\delta>0$. Lemma \ref{lem-tight} implies that $\big\{ Q^N\big\}_{N\in \N}$ is tight in $\mathcal X= C\big([0,T], H^{-1-} \big)$.

Since we are dealing with the SDEs \eqref{stoch-point-vortex}, we need to consider $Q^N$ together with the distribution of Brownian motions. Although we use only finitely many Brownian motions in \eqref{stoch-point-vortex}, here we consider for simplicity the whole family $\big\{ (W^j_t)_{0\leq t\leq T}: j\in \N \big\}$. To this end, we assume $\R^\infty$ is endowed with the metric
  $$d_\infty(a,b)= \sum_{n=1}^\infty \frac{|a_n-b_n| \wedge 1}{2^n}, \quad a,b \in \R^\infty.$$
Then $(\R^\infty, d_\infty(a,b))$ is separable and complete (see \cite[p. 9, Example 1.2]{Billingsley}). The distance in $\mathcal Y:= C\big([0,T], \R^\infty \big)$ is given by
  $$d_{\mathcal Y}(w,\hat w) = \sup_{t\in [0,T]} d_\infty(w(t), \hat w(t)),\quad w, \hat w \in \mathcal Y,$$
which makes $\mathcal Y$ a Polish space. Denote by $\mathcal W$ the law on $\mathcal Y$ of the sequence of independent Brownian motions $\big\{ (W^j_t)_{0\leq t\leq T}: j\in \N \big\}$.

To simplify the notations, we write $W_\cdot= (W_t)_{0\leq t\leq T}$ for the whole sequence of processes $\big\{ (W^j_t)_{0\leq t\leq T}: j\in \N \big\}$ in $\mathcal Y$. Denote by $P^N$ the joint law of $\big(\omega^N_\cdot, W_\cdot \big)$ on $\mathcal X \times \mathcal Y,\, N\geq 1$. Since the marginal laws $\big\{ Q^N \big\}_{N\in \N}$ and $\{\mathcal W\}$ are respectively tight on $\mathcal X$ and $\mathcal Y$, we conclude that $\big\{ P^N \big\}_{N\in \N}$ is tight on $\mathcal X \times \mathcal Y$. By Skorokhod's representation theorem, there exists a subsequence $\{N_k \}_{k\in \N}$ of integers, a probability space $\big(\hat \Theta, \hat{\mathcal F}, \hat \P \big)$ and stochastic processes $\big(\hat \omega^{N_k}_\cdot, \hat W^{N_k}_\cdot\big)$ on this space with the corresponding laws $P^{N_k}$, and converging $\hat\P$-a.s. in $\mathcal X\times \mathcal Y$ to a limit $\big(\hat\omega_\cdot, \hat W_\cdot \big)$. We are going to prove that $\big(\hat\omega_\cdot, \hat W_\cdot \big)$, or more precisely another closely defined process, is the solution claimed by Theorem \ref{thm-main-result}.

As in \cite{Flandoli}, we need to enlarge the probability space $\big(\hat \Theta, \hat{\mathcal F}, \hat \P \big)$ so that it contains certain independent r.v.'s we need. Denote by $\big(\tilde \Theta, \tilde {\mathcal F}, \tilde \P \big)$ a probability space on which, for every $N\geq 1$, it is defined a uniformly distributed random permutation $\tilde s_N: \tilde \Theta \to \Sigma_N$, where $\Sigma_N$ is the permutation group of order $N$. Define the product probability space
  \begin{equation}\label{product-space}
  (\Theta, \mathcal F, \P ) =\big(\hat \Theta\times \tilde \Theta, \hat{\mathcal F}\otimes \tilde {\mathcal F}, \hat \P \otimes \tilde \P\big)
  \end{equation}
and the new processes
  $$\big(\omega^{N_k}, W^{N_k}\big) = \big(\hat \omega^{N_k}, \hat W^{N_k}\big)\circ \pi_1, \quad (\omega, W) = \big(\hat \omega, \hat W \big)\circ \pi_1, \quad  s_N = \tilde s_N \circ \pi_2,$$
where $\pi_1$ and $\pi_2$ are the projections on $\hat \Theta\times \tilde \Theta$. Here, we slightly abuse the notations by denoting the final probability spaces and processes like the original ones. We shall clarify in the sequel which ones we are investigating.

First, we have the following simple result.

\begin{lemma}\label{lem-absolute-continuity}
For every $t\in [0,T]$, the law $\mu_t$ of $\omega_t$ on $H^{-1-}(\T^2)$ is absolutely continuous with respect to the law $\mu$ of white noise, with a bounded density denoted by $\rho_t$.
\end{lemma}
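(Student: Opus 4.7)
The plan is to transfer the $N$-level density bound of Lemma \ref{lem-law-point-vortices} through the Skorokhod limit, and then upgrade the inequality from continuous test functions to general Borel sets.

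First I would fix $t \in [0,T]$ and a nonnegative bounded continuous function $F$ on $H^{-1-}(\T^2)$. By Lemma \ref{lem-law-point-vortices},
\[
\E\bigl[F(\omega^{N_k}_t)\bigr] \;\leq\; C_{N_k}\,\|\rho_0\|_\infty \int_{H^{-1-}(\T^2)} F(\omega)\, \mu_{N_k}^0(\d\omega).
\]
By construction of the Skorokhod representation, $\hat\omega^{N_k}_t \to \hat\omega_t$ $\hat\P$-a.s.\ in $H^{-1-}(\T^2)$, so $F(\hat\omega^{N_k}_t) \to F(\hat\omega_t)$ a.s. Since $F$ is bounded, dominated convergence gives $\E[F(\omega^{N_k}_t)] \to \E[F(\omega_t)]$. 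On the right, Proposition \ref{prop-weak-convergence} gives $\mu_{N_k}^0 \to \mu$ weakly on $H^{-1-}(\T^2)$, hence $\int F\,\d\mu_{N_k}^0 \to \int F\,\d\mu$, while $C_{N_k} \to 1$ (as noted just after Lemma \ref{lem-change-variable}). Passing to the limit,
\[
\E\bigl[F(\omega_t)\bigr] \;\leq\; \|\rho_0\|_\infty \int_{H^{-1-}(\T^2)} F(\omega)\, \mu(\d\omega),
\]
i.e.\ $\int F\,\d\mu_t \leq \|\rho_0\|_\infty \int F\,\d\mu$ for every nonnegative bounded continuous $F$.

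Next I would extend this inequality from $F \in C_b(H^{-1-}(\T^2);[0,\infty))$ to $F = \mathbf 1_A$ for an arbitrary Borel set $A \subset H^{-1-}(\T^2)$. Both $\mu_t$ and $\mu$ are Borel probability measures on the Polish space $H^{-1-}(\T^2)$, hence inner/outer regular. By approximating an open set $U$ from below by continuous functions $0 \leq F_n \uparrow \mathbf 1_U$ (via $F_n(\omega) = n\,d(\omega, U^c) \wedge 1$) and using monotone convergence on both sides, the inequality extends to all open sets, and then to all Borel sets by outer regularity. Thus
\[
\mu_t(A) \;\leq\; \|\rho_0\|_\infty\, \mu(A) \qquad \text{for every Borel } A \subset H^{-1-}(\T^2).
\]
In particular $\mu_t \ll \mu$, so by the Radon--Nikodym theorem there exists a density $\rho_t = \d\mu_t/\d\mu$. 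The displayed inequality forces $\rho_t \leq \|\rho_0\|_\infty$ for $\mu$-a.e.\ $\omega$, so $\rho_t$ can be chosen to be a bounded measurable function with $\|\rho_t\|_{L^\infty(\mu)} \leq \|\rho_0\|_\infty$.

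The only subtle point is the passage from continuous $F$ to indicator functions of Borel sets, which relies on regularity of finite Borel measures on the Polish space $H^{-1-}(\T^2)$; everything else is a direct combination of the $N$-level bound of Lemma \ref{lem-law-point-vortices}, the weak convergence $\mu_N^0 \to \mu$ of Proposition \ref{prop-weak-convergence}, the scaling $C_N \to 1$, and the a.s.\ Skorokhod convergence $\hat\omega^{N_k}_t \to \hat\omega_t$.
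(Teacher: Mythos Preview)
Your proof is correct and follows essentially the same route as the paper: pass the bound of Lemma \ref{lem-law-point-vortices} to the limit using the Skorokhod convergence $\hat\omega^{N_k}_t \to \hat\omega_t$, Proposition \ref{prop-weak-convergence}, and $C_{N_k}\to 1$, then conclude $\mu_t \ll \mu$ with density bounded by $\|\rho_0\|_\infty$. The only difference is that you spell out the extension from $F \in C_b$ to Borel indicators via regularity on the Polish space $H^{-1-}(\T^2)$, whereas the paper leaves this standard step implicit.
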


\begin{proof}
Note that $\omega_\cdot$ is defined on the product probability space \eqref{product-space} but it has the same law with $\hat \omega_\cdot$. Hence it suffices to prove the assertion for $\hat \omega_t,\, t\in [0,T]$.

For every non-negative $F\in C_b\big(H^{-1-}(\T^2) \big)$, since $\hat \omega^{N_k}_t$ converges to $\hat \omega_t$ a.s., one has
  \begin{equation}\label{lem-absolute-continuity.1}
  \int F(\omega)\,\d\mu_t(\omega) = \hat\E \big[ F(\hat \omega_t)\big]= \lim_{k\to \infty} \hat\E \big[F\big( \hat \omega^{N_k}_t\big)\big] = \lim_{k\to \infty} \E \big[F\big( \omega^{N_k}_t\big)\big]
  \end{equation}
where $\hat\E$ is the expectation on $\big(\hat \Theta, \hat{\mathcal F}, \hat \P \big)$ and $\E$ the one on the original probability space. By Lemma \ref{lem-law-point-vortices} and Proposition \ref{prop-weak-convergence},
  \[\aligned
  \int F(\omega)\, \mu_t(\d\omega) & \leq \lim_{k\to \infty} C_{N_k} \|\rho_0\|_\infty \int F(\omega)\, \mu_{N_k}^0(\d\omega)
  = \|\rho_0\|_\infty \int F(\omega)\, \mu(\d\omega).
  \endaligned\]
This implies that $\mu_t\ll \mu$ with a density bounded by $\|\rho_0\|_\infty$.
\end{proof}

The following result identifies the structure of $\omega^{N_k}_\cdot$ as a sum of Dirac masses.

\begin{lemma}\label{lem-1}
The process $\omega^{N_k}_t$  on the new probability space can be represented in the form $\frac1{\sqrt {N_k}} \sum_{i=1}^{N_k} \xi_i \delta_{X^{i, N_k}_t}$, where
  $$\big(\big(\xi_1, X^{1,N_k}_0\big), \ldots, \big(\xi_{N_k}, X^{N_k,N_k}_0\big)\big)$$
is a random vector with law $\lambda_{N_k}^0$ and $\big(X^{1,N_k}_t, \ldots, X^{N_k,N_k}_t\big)$ solves the stochastic system \eqref{stoch-point-vortex} with the initial condition $\big(X^{1,N_k}_0, \ldots, X^{N_k,N_k}_0 \big)$ and new Brownian motions $\big\{\big( W^{N_k,j}_t \big): 1\leq j\leq N_k \big\}$ defined above.
\end{lemma}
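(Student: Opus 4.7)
The plan is to transfer the Dirac-sum structure from the original point-vortex vorticity $\omega^{N_k}_{\rho_0,\cdot}$ to the process $\omega^{N_k}_\cdot$ on the product space \eqref{product-space} via equality in law, using the auxiliary random permutation $s_{N_k}$ to provide a canonical measurable labelling of the $N_k$ vortices.

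First, on the original space the trajectory $\omega^{N_k}_{\rho_0,\cdot}$ lies a.s.\ in $C\big([0,T],\mathcal{M}_{N_k}(\T^2)\big)$, with the same $N_k$ nonzero intensities carried along the flow (the SDE \eqref{stoch-point-vortex} only moves the support points). By construction of the enlarged space, $\omega^{N_k}_\cdot$ has the same law, so a.s.\ each $\omega^{N_k}_t$ is a sum of $N_k$ Dirac masses at distinct points with time-independent weights. Now fix once and for all a Borel measurable right-inverse $\Psi_{N_k}:\mathcal{M}_{N_k}(\T^2)\to (\R\times\T^2)^{N_k}$ of the symmetrization map $\mathcal T_{N_k}$ (for instance using a lexicographic ordering of support points, so that $\mathcal T_{N_k}\circ\Psi_{N_k}=\mathrm{id}$), and set
\[
  \big((\xi_1,X^{1,N_k}_0),\ldots,(\xi_{N_k},X^{N_k,N_k}_0)\big) := s_{N_k}\cdot \Psi_{N_k}\big(\omega^{N_k}_0\big),
\]
with $s_{N_k}\cdot$ the coordinate permutation. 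Since $s_{N_k}$ is uniformly distributed on $\Sigma_{N_k}$ and is independent of $\omega^{N_k}_0$, this averaging lifts the law of $\omega^{N_k}_0$ to the unique symmetric probability measure on $(\R\times\T^2)^{N_k}$ whose image under $\mathcal T_{N_k}$ equals it; by Lemma \ref{lem-change-variable} this is precisely the initial distribution $C_{N_k}(\rho_0\circ\mathcal T_{N_k})\lambda_{N_k}^0$ of the original point-vortex system, and in particular is absolutely continuous with respect to $\lambda_{N_k}^0$.

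For $t\in(0,T]$ define the trajectory $\big(X^{1,N_k}_t,\ldots,X^{N_k,N_k}_t\big)$ as the image of the initial data under the Borel measurable solution map of \eqref{stoch-point-vortex} driven by $\{W^{N_k,j}_\cdot\}_{j=1}^{N_k}$; this is well defined because, by Theorem \ref{thm-invariance}, the map exists for $\lambda_{N_k}^0$-a.e.\ starting point and our initial law is absolutely continuous with respect to $\lambda_{N_k}^0$. Equality in joint law of $\big(\omega^{N_k}_\cdot, W^{N_k}_\cdot\big)$ and $\big(\omega^{N_k}_{\rho_0,\cdot}, W_\cdot\big)$ then forces, a.s.\ and uniformly in $t\in[0,T]$,
\[
  \omega^{N_k}_t = \frac{1}{\sqrt{N_k}}\sum_{i=1}^{N_k}\xi_i\,\delta_{X^{i,N_k}_t},
\]
completing the desired decomposition.

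The main delicate point is this final identification: one must check that the stochastic integrals against $\{W^{N_k,j}\}$ appearing in the weak vorticity equation \eqref{prop-stationary.1} transfer to pathwise It\^o integrals with respect to $\{W^{N_k,j}\}$ in the natural filtration on the enlarged space, so that the extracted $X^{i,N_k}_\cdot$ genuinely satisfy the SDE \eqref{stoch-point-vortex}. This is a standard martingale-problem argument: equality in law preserves the identities of \eqref{prop-stationary.1}, and $\{W^{N_k,j}\}$ remains a sequence of Brownian motions adapted to the filtration generated by $(\omega^{N_k}_\cdot, W^{N_k}_\cdot)$; combining these facts with the Dirac-sum structure yields the componentwise SDEs for the individual trajectories.
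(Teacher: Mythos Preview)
Your overall strategy is sound and leads to a valid proof, but it differs from the paper's route in an instructive way. The paper first extracts the \emph{full trajectories} $(\hat\xi_i,\hat X^{i,N_k}_\cdot)$ directly from the path $\hat\omega^{N_k}_\cdot\in C([0,T],\mathcal M_{N_k}(\T^2))$ (this is Step~1 of \cite[Lemma~28]{Flandoli}), then uses a Krylov-type argument to show that the $\hat W^{N_k,j}$ are Brownian motions for the filtration generated by $(\hat\omega^{N_k},\hat W^{N_k})$, transfers the weak vorticity identity \eqref{prop-stationary.1} by equality in law, and from the Dirac structure reads off that the extracted trajectories satisfy \eqref{stoch-point-vortex}. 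The shuffling comes only at the end, to correct the law of the labelled vector. You instead extract only the \emph{initial} data, invoke the strong solution map of \eqref{stoch-point-vortex} (available from Theorem~\ref{thm-invariance}/\cite[Theorem~8]{FGP} as a measurable functional of initial point and Brownian path) to \emph{define} the trajectories, and then identify the resulting Dirac sum with $\omega^{N_k}_t$. Your approach is slightly more economical because the SDE is satisfied by construction; the paper's approach is more robust in that it does not rely on the existence of a pathwise solution map.

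Two points deserve tightening. First, your final paragraph is misplaced for your own argument: since you \emph{define} $X^{i,N_k}_\cdot$ via the solution map, there is nothing to verify about the SDE; the transfer of stochastic integrals is the paper's concern, not yours. What you actually need to justify is the a.s.\ identification $\omega^{N_k}_t=\frac{1}{\sqrt{N_k}}\sum_i\xi_i\delta_{X^{i,N_k}_t}$, and ``equality in joint law forces'' is not quite an argument. The clean way is to observe that on the original space the entire path $\omega^{N_k}_{\rho_0,\cdot}$ is a.s.\ a fixed Borel function of $(\omega^{N_k}_{\rho_0,0},W_\cdot)$ (namely $\mathcal T_{N_k}$ composed with the strong solution map and any measurable selection $\Psi_{N_k}$, using permutation-equivariance of \eqref{stoch-point-vortex}); since a.s.\ functional relations are preserved under equality of joint law, the same identity holds on the new space. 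Second, you correctly identify the law of the shuffled initial vector as $C_{N_k}(\rho_0\circ\mathcal T_{N_k})\lambda_{N_k}^0$, absolutely continuous with respect to $\lambda_{N_k}^0$; the lemma's statement ``law $\lambda_{N_k}^0$'' is inherited verbatim from the $\rho_0\equiv 1$ setting of \cite[Lemma~28]{Flandoli} and should be read accordingly.
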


\begin{proof}
Repeating \textbf{Step 1} of the proof of \cite[Lemma 28]{Flandoli}, we can find a family of  random elements $\big( \hat\xi_1, \hat X^{1, N_k}_\cdot \big), \ldots, \big( \hat\xi_{N_k}, \hat X^{N_k, N_k}_\cdot \big)$ in $\R\times C\big([0,T], \T^2\big)$, such that $\hat \omega^{N_k}_t = \frac1{\sqrt {N_k}} \sum_{i=1}^{N_k} \hat\xi_i \delta_{\hat X^{i, N_k}_t}$. The first claim will be proved after a redefinition of the random elements.

Next we follow the arguments of Krylov \cite[Section 2.6, p. 89]{Krylov}. Consider the filtration defined on the original probability space $(\Theta, \mathcal F, \P)$:
  $$\mathcal F_t= \sigma\big( (\xi_n, X^n_0): n\in \N\big) \vee \sigma \big( W_s: s\leq t\big),\quad t\in [0,T],$$
where $(\xi_n, X^n_0),\, n\in \N$ are given at the beginning of Section \ref{sect-white-noise}. Recall that we denote by $W_t$ the sequence of Brownian motions $\big\{W^j_t: j\in \N\big\}$. The processes $\big(\omega^N_t, W_t \big)$ are adapted to the filtration $(\mathcal F_t)_{0\leq t\leq T}$. Fix any $t_0\in [0,T)$. The increments of $W_s$ after the time $t_0$ is independent on $\mathcal F_{t_0}$. Therefore, the processes $\big(\omega^N_t, W_t \big)\, (t\leq t_0)$ do not depend on the increments of $W_s$ after the time $t_0$. Due to the coincidence of finite dimensional distributions, the processes $\big(\hat \omega^{N_k}_t, \hat W^{N_k}_t \big)\, (t\leq t_0)$  do not depend on the increments of $ \hat W^{N_k}_s$ after the time $t_0$. This property  holds in the limit process, i.e. for the process $\big(\hat\omega_\cdot, \hat W_\cdot \big)$. For the sake of convenience, we also denote $\big(\hat\omega_\cdot, \hat W_\cdot \big)$ by $\big(\hat \omega^{N_0}_t, \hat W^{N_0}_t \big)$. The above arguments imply that, for all $k\geq 0$ and any $j\in \N$, $\hat W^{N_k,j}_t$ is a Brownian motion with respect to the filtration $\hat{\mathcal F}^{N_k}_t$, which is the completion of $\sigma\big(\hat \omega^{N_k}_s, \hat W^{N_k}_s: s\leq t\big),\, t\in [0,T]$. Moreover, for all $k\geq 0$ and $s\leq t$, $\hat \omega^{N_k}_s$ is $\hat{\mathcal F}^{N_k}_t$-measurable. Since $\hat \omega^{N_k}_s$ is continuous with respect to $s$, it is a progressively measurable process with respect to $\hat{\mathcal F}^{N_k}_t$. Therefore, the stochastic integrals involved below make sense.

Since the original process $\omega^{N_k}_t$ satisfies \eqref{prop-stationary.1}, which implies
  $$\aligned \E\bigg[\sup_{t\in [0,T]} \bigg| & \big\<\omega^{N_k}_t, \phi\big\> - \big\<\omega^{N_k}_0, \phi\big\> - \int_0^t \! \int_{\T^2} \! \int_{\T^2} \nabla\phi(x) \cdot K(x-y)\, \omega^{N_k}_s(\d x) \omega^{N_k}_s(\d y) \d s\\
  &- \sum_{j=1}^{N_k} \int_0^t \big\<\omega^{N_k}_s, \sigma_j \cdot \nabla \phi\big\>\,\d W^j_s - \frac12 \sum_{j=1}^{N_k} \int_0^t \big\<\omega^{N_k}_s, \sigma_j \cdot \nabla (\sigma_j\cdot \nabla \phi)\big\>\,\d s\bigg| \wedge 1\bigg]=0
  \endaligned$$
for all $\phi\in C^\infty(\T^2)$, the same property holds for the new processes $\big(\hat \omega^{N_k}_t, \hat W^{N_k}_t\big)$, because they have the same finite dimensional distributions with $\big(\omega^{N_k}_t, W_t\big)$. Hence, $\hat \P$-a.s., it holds
  \begin{equation}\label{lem-1.1}
  \aligned \sup_{t\in [0,T]} \bigg| & \big\< \hat\omega^{N_k}_t, \phi\big\> - \big\< \hat\omega^{N_k}_0, \phi\big\> - \int_0^t \! \int_{\T^2} \! \int_{\T^2} \nabla\phi(x) \cdot K(x-y)\, \hat\omega^{N_k}_s(\d x) \hat\omega^{N_k}_s(\d y) \d s\\
  &- \sum_{j=1}^{N_k} \int_0^t \big\< \hat\omega^{N_k}_s, \sigma_j \cdot \nabla \phi\big\>\,\d \hat W^{N_k,j}_s - \frac12 \sum_{j=1}^{N_k} \int_0^t \big\< \hat\omega^{N_k}_s, \sigma_j \cdot \nabla (\sigma_j\cdot \nabla \phi)\big\>\,\d s\bigg| =0
  \endaligned
  \end{equation}
on a dense countable set of $\phi\in C^\infty(\T^2)$. Using the structure $\hat \omega^{N_k}_t = \frac1{\sqrt{N_k}} \sum_{i=1}^{N_k} \hat\xi_i \delta_{\hat X^{i, N_k}_t}$, we conclude that $\big(\hat X^{1,N_k}_t, \ldots, \hat X^{N_k,N_k}_t\big)$ solves the stochastic system \eqref{stoch-point-vortex} with the Brownian motions $\big(\hat W^{N_k,j}_t\big)_{t\geq 0},\, 1\leq j\leq N_k$.

At this stage, we can get the final assertion by applying the so-called shuffling procedure, which amounts to redefining the r.v.'s and processes on the product probability space \eqref{product-space} by composition with random permutations given before Lemma \ref{lem-absolute-continuity}. The remaining part of the proof is the same as that of \cite[Lemma 28]{Flandoli}, thus we omit it here.
\end{proof}

Finally, we show that the processes $(\omega, W)$ defined on the new probability space \eqref{product-space} is the $\rho$-white noise solution to the stochastic Euler equation.

\begin{proposition}\label{prop-1}
For any $\phi\in C^\infty(\T^2)$ and $t\in [0,T]$,
  $$\aligned \E\bigg[ \bigg|\<\omega_t, \phi\> - \<\omega_0, \phi\> & - \int_0^t \big\<\omega_s\otimes \omega_s, H_\phi \big\>\, \d s - \sum_{j=1}^\infty \int_0^t \big\<\omega_s, \sigma_j \cdot \nabla \phi\big\>\,\d W^j_s\\
  & - \frac12 \sum_{j=1}^\infty \int_0^t \big\<\omega_s, \sigma_j \cdot \nabla (\sigma_j\cdot \nabla \phi)\big\>\,\d s\bigg| \wedge 1\bigg]=0.
  \endaligned$$
\end{proposition}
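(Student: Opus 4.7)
The plan is to pass to the limit $k\to\infty$ in the identity of Lemma \ref{lem-1} (which transfers verbatim to $\omega^{N_k}$ and the shuffled Brownian motions $W^{N_k,j}$ on the product space \eqref{product-space}), and to show that each term converges to the corresponding term in Proposition \ref{prop-1} in the $\E[|\cdot|\wedge 1]$-sense. A structural difficulty is that the equation for $\omega^{N_k}$ carries only $N_k$ noise terms whereas the target equation carries infinitely many. I would resolve this by introducing an auxiliary truncation level $M\in\N$, splitting $\sum_{j=1}^{N_k}=\sum_{j=1}^{M}+\sum_{j=M+1}^{N_k}$, first passing $k\to\infty$ with $M$ fixed, and then sending $M\to\infty$. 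The uniform moment bounds of Corollary \ref{cor-moment-estimate} supply the uniform integrability needed to upgrade the a.s.\ convergence of Skorokhod representation to convergence of expectations.

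The linear terms $\<\omega^{N_k}_0,\phi\>$ and $\<\omega^{N_k}_t,\phi\>$ converge a.s.\ from the a.s.\ convergence $\omega^{N_k}\to\omega$ in $\mathcal X$, and are uniformly $L^p$-bounded by Corollary \ref{cor-moment-estimate}. For the It\^o correction, each of the finitely many summands with $1\le j\le M$ converges in $L^1$ by dominated convergence, since $\sigma_j\cdot\nabla(\sigma_j\cdot\nabla\phi)$ is bounded and $\<\omega^{N_k}_s,\sigma_j\cdot\nabla(\sigma_j\cdot\nabla\phi)\>$ is uniformly $L^2$-bounded in $k$ and $s$. The tail $\sum_{j=M+1}^{N_k}\int_0^t\<\omega^{N_k}_s,\sigma_j\cdot\nabla(\sigma_j\cdot\nabla\phi)\>\,\d s$ is dominated in $L^1$, uniformly in $k$, by
$$C\,t\sum_{j=M+1}^\infty\bigl(\|\sigma_j\|_\infty^2\|\nabla^2\phi\|_\infty+\|\sigma_j\cdot\nabla\sigma_j\|_\infty\|\nabla\phi\|_\infty\bigr),$$
which vanishes as $M\to\infty$ by assumption \textbf{(H2)}; the same bound controls the corresponding tail in the limit.

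The nonlinear drift $\int_0^t\<\omega^{N_k}_s\otimes\omega^{N_k}_s,H_\phi\>\,\d s$ is the most delicate term, since $H_\phi$ is only bounded (with a diagonal singularity of $K$) and the map $\omega\mapsto\<\omega\otimes\omega,H_\phi\>$ is not continuous on $H^{-1-}(\T^2)$. The standard mollification approach is to choose $H_\phi^\varepsilon\in C^\infty(\T^2\times\T^2)$ symmetric with $\|H_\phi^\varepsilon\|_\infty\le 2\|H_\phi\|_\infty$ and $H_\phi^\varepsilon\to H_\phi$ in $L^2(\T^2\times\T^2)$. For fixed $\varepsilon$, $\omega\mapsto\<\omega\otimes\omega,H_\phi^\varepsilon\>$ is continuous on $H^{-1-}(\T^2)$, so the convergence $\omega^{N_k}_s\to\omega_s$ in $\mathcal X$ combined with Corollary \ref{cor-moment-estimate} and bounded convergence in time yields the limit of the $\varepsilon$-regularized drift. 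The approximation error is controlled uniformly in $k$ using the explicit second-moment identity of Lemma \ref{2-lem-integrability} applied to $H_\phi-H_\phi^\varepsilon$: the three contributions are respectively $O(N_k^{-1})$, quadratic in $\int (H_\phi-H_\phi^\varepsilon)(x,x)\,\d x$, and $O(\|H_\phi-H_\phi^\varepsilon\|_{L^2}^2)$, all of which vanish as $\varepsilon\to 0$; the corresponding error on the limit side follows from the $L^2(\mu)$-definition of $\<\omega\otimes\omega,H_\phi\>$ recalled after \eqref{weak-Euler-vorticity-1}.

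The stochastic integrals constitute the principal obstacle. For the truncated sum with $M$ fixed, I would invoke a standard convergence result for stochastic integrals under Skorokhod-type a.s.\ convergence (as used in the SPDE literature, e.g.\ Debussche--Glatt-Holtz--Temam, or by following the corresponding step in \cite[Section 4.2]{Flandoli}): as verified in the proof of Lemma \ref{lem-1}, $W^{N_k,j}$ is an $\hat{\mathcal F}^{N_k}_t$-Brownian motion and $\<\omega^{N_k}_s,\sigma_j\cdot\nabla\phi\>$ is a bounded-in-$L^2$, progressively measurable integrand with a.s.\ continuous-in-time limit $\<\omega_s,\sigma_j\cdot\nabla\phi\>$; hence each of the finitely many stochastic integrals converges in probability to $\int_0^t\<\omega_s,\sigma_j\cdot\nabla\phi\>\,\d W^j_s$. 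The tail $\sum_{j=M+1}^{N_k}\int_0^t\<\omega^{N_k}_s,\sigma_j\cdot\nabla\phi\>\,\d W^{N_k,j}_s$ is estimated in $L^2$ exactly as in \eqref{lem-estimate-3}, by $C\,t\,\|\nabla\phi\|_\infty^2\sum_{j=M+1}^\infty\|\sigma_j\|_\infty^2$, which vanishes as $M\to\infty$ by \textbf{(H2)}, uniformly in $k$; the analogous tail of $\sum_{j=1}^\infty\int_0^t\<\omega_s,\sigma_j\cdot\nabla\phi\>\,\d W^j_s$ is controlled by the same bound using Lemma \ref{lem-absolute-continuity} and the estimates of Remark \ref{1-rem-1}. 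Assembling all four groups of terms yields the claim.
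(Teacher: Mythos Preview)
Your approach is essentially the same as the paper's: both proofs insert the $N_k$-level identity, split the discrepancy into the two linear terms, the nonlinear drift, the It\^o correction, and the stochastic integrals, and use an auxiliary truncation level (the paper calls it $J$, you call it $M$) to separate a finite block of noise terms from the tail. The tail bounds you give for the It\^o correction and for the stochastic integrals coincide with the paper's, and for the convergence of the finitely many stochastic integrals the paper cites \cite[Lemma~3.2]{Luo}, which plays exactly the role of the Skorokhod-type lemma you invoke.

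One technical point in your treatment of the nonlinear drift deserves care. You assert that the contribution quadratic in $\int(H_\phi-H_\phi^\varepsilon)(x,x)\,\d x$ vanishes as $\varepsilon\to 0$; but for a generic mollification this need not hold, since $H_\phi$ is bounded yet discontinuous on the diagonal (so $H_\phi^\varepsilon(x,x)$ has no reason to tend to $0$). The paper avoids this by taking the approximation $H^\delta_\phi$ from \cite[Remark~9]{Flandoli}, which satisfies $H^\delta_\phi(x,x)=0$ for all $x$; together with the convention $H_\phi(x,x)=0$, both diagonal terms in the second-moment identity of Lemma~\ref{2-lem-integrability} vanish identically, leaving only the $L^2$ term. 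Also note that to invoke that identity you must first pass from the law of $\omega^{N_k}_s$ to $\mu_{N_k}^0$ via Lemma~\ref{lem-law-point-vortices}, picking up the harmless factor $C_{N_k}\|\rho_0\|_\infty$; the paper does this explicitly. With these adjustments your argument matches the paper's.
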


This implies that \eqref{def-solution-1} holds a.s. at time $t$. Since the processes are continuous, we see that the identity holds uniformly in time, with probability one on the product space \eqref{product-space}. This will prove the assertion of Theorem \ref{thm-main-result}.

\begin{proof}[Proof of Proposition \ref{prop-1}]
We denote by $I$ the expectation on the left hand side of the identity. Recall the definition of $\big(\omega^{N_k}, W^{N_k}\big)$ before Lemma \ref{lem-absolute-continuity}. This process has the same distribution as that of $\big(\hat\omega^{N_k}, \hat W^{N_k}\big)$. Thus it follows from \eqref{lem-1.1} that for every $k\in \N$, it holds $\P$-a.s.,
  \begin{equation*}
  \aligned \big\< \omega^{N_k}_t, \phi\big\> - \big\< \omega^{N_k}_0, \phi\big\> & - \int_0^t \big\<\omega^{N_k}_s \otimes \omega^{N_k}_s, H_\phi \big\>\, \d s - \sum_{j=1}^{N_k} \int_0^t \big\< \omega^{N_k}_s, \sigma_j \cdot \nabla \phi\big\>\,\d  W^{N_k,j}_s\\
  &- \frac12 \sum_{j=1}^{N_k} \int_0^t \big\< \omega^{N_k}_s, \sigma_j \cdot \nabla (\sigma_j\cdot \nabla \phi)\big\>\,\d s=0.
  \endaligned
  \end{equation*}
Consequently, using the simple inequality $|a+b|\wedge 1 \leq |a|\wedge 1+ |b|\wedge 1 $ leads to
  \begin{eqnarray*}
  I &\leq& \E\big[\big|\<\omega_t, \phi\> - \big\< \omega^{N_k}_t, \phi\big\>\big|\wedge 1\big] + \E\big[\big|\<\omega_0, \phi\> - \big\< \omega^{N_k}_0, \phi\big\>\big|\wedge 1\big] \\
  && + \E \bigg[\bigg| \int_0^t \big\<\omega_s\otimes \omega_s, H_\phi \big\>\, \d s - \int_0^t \big\<\omega^{N_k}_s \otimes \omega^{N_k}_s, H_\phi \big\>\, \d s \bigg|\wedge 1 \bigg] \\
  && + \E \bigg[\bigg| \sum_{j=1}^\infty \int_0^t \big\<\omega_s, \sigma_j \cdot \nabla \phi\big\>\,\d W^j_s - \sum_{j=1}^{N_k} \int_0^t \big\< \omega^{N_k}_s, \sigma_j \cdot \nabla \phi\big\>\,\d  W^{N_k,j}_s \bigg|\wedge 1 \bigg] \\
  && + \E \bigg[\bigg| \frac12 \sum_{j=1}^\infty \int_0^t \big\<\omega_s, \sigma_j \cdot \nabla (\sigma_j\cdot \nabla \phi)\big\>\,\d s - \frac12 \sum_{j=1}^{N_k}\int_0^t \big\< \omega^{N_k}_s, \sigma_j \cdot \nabla (\sigma_j\cdot \nabla \phi)\big\>\,\d s \bigg|\wedge 1 \bigg].
  \end{eqnarray*}
We denote by $I^{N_k}_i,\, i=1,\ldots, 5$ the terms on the right hand side of the above inequality.

First, by the a.s. convergence of $\omega^{N_k}_\cdot$ to $\omega_\cdot$ in $C\big([0,T], H^{-1-}(\T^2)\big)$ we immediately get
  $$\lim_{k\to \infty} I^{N_k}_1 = \lim_{k\to \infty} I^{N_k}_2=0.$$
Next, to show that $I^{N_k}_3$ tends to 0, we consider a smooth approximation $H^\delta_\phi$ of $H_\phi$ (see \cite[Remark 9]{Flandoli}), with $H^\delta_\phi(x,x)=0$ for all $x\in \T^2$ and $\delta >0$. The a.s. convergence of $\omega^{N_k}_\cdot$ to $\omega_\cdot$ in $C\big([0,T], H^{-1-}(\T^2)\big)$ implies that of $\omega^{N_k}_\cdot \otimes \omega^{N_k}_\cdot$ to $\omega_\cdot \otimes \omega_\cdot$ in $C\big([0,T], H^{-2-}(\T^2\times \T^2)\big)$. Hence, for all $\delta>0$,
  \begin{equation*}
  \lim_{k\to \infty} \E\bigg[\bigg|\int_0^t \big\<\omega_s\otimes \omega_s, H^\delta_\phi \big\>\, \d s - \int_0^t \big\<\omega^{N_k}_s \otimes \omega^{N_k}_s, H^\delta_\phi \big\>\, \d s \bigg|\wedge 1 \bigg] =0.
  \end{equation*}
As a result,
  \begin{equation}\label{prop-1.1}
  \aligned
  \lim_{k\to \infty} I^{N_k}_3 &\leq \E \bigg[\bigg|\int_0^t \big\<\omega_s\otimes \omega_s, H^\delta_\phi -H_\phi \big\>\, \d s \bigg|\wedge 1 \bigg]\\
  &\hskip13 pt + \limsup_{k\to \infty} \E \bigg[\bigg|\int_0^t \big\<\omega^{N_k}_s \otimes \omega^{N_k}_s, H^\delta_\phi -H_\phi \big\>\, \d s \bigg|\wedge 1 \bigg]\\
  &=: I_{3,1} + I_{3,2}.
  \endaligned
  \end{equation}
We have
  $$I_{3,1} \leq \int_0^t \E\big| \big\<\omega_s\otimes \omega_s, H^\delta_\phi -H_\phi \big\> \big|\, \d s \leq \int_0^t \Big[ \E  \big\<\omega_s\otimes \omega_s, H^\delta_\phi -H_\phi \big\>^2 \Big]^{1/2}\, \d s. $$
Thus by Lemma \ref{lem-absolute-continuity} and \cite[Theorem 8]{Flandoli},
  \begin{equation}\label{prop-1.2}
  \aligned I_{3,1} &\leq \int_0^t \Big[ \E_\mu \big(\rho_s(\omega) \big\<\omega\otimes \omega, H^\delta_\phi -H_\phi \big\>^2 \big) \Big]^{1/2}\, \d s\\
  &\leq t \|\rho_0\|_\infty^{1/2} \Big[ \E_\mu \big\<\omega\otimes \omega, H^\delta_\phi -H_\phi \big\>^2 \Big]^{1/2} \to 0 \quad \mbox{as } \delta \to 0.
  \endaligned
  \end{equation}
Here $\E_\mu$ is the expectation on $H^{-1-}$ w.r.t. the white noise measure $\mu$. Similarly, using Lemma \ref{lem-law-point-vortices} we can show that
  $$\aligned
  I_{3,2}& \leq \limsup_{k\to \infty} \int_0^t \Big[ \E  \big\<\omega^{N_k}_s\otimes \omega^{N_k}_s, H^\delta_\phi -H_\phi \big\>^2 \Big]^{1/2}\, \d s\\
  &\leq \limsup_{k\to \infty}  \int_0^t \bigg[ C_{N_k} \|\rho_0\|_\infty \int_{H^{-1-}(\T^2)} \big\<\omega\otimes \omega, H^\delta_\phi -H_\phi \big\>^2\mu_{N_k}^0(\d\omega) \bigg]^{1/2}\, \d s.
  \endaligned$$
Now by the last assertion of Lemma \ref{2-lem-integrability} and the convention that $H_\phi(x,x)= H^\delta_\phi(x,x)=0$,
  $$I_{3,2} \leq t \sqrt{2\|\rho_0\|_\infty }\, \bigg[ \int_{\T^2} \int_{\T^2} \big(H^\delta_\phi -H_\phi \big)^2(x,y)\, \d x\d y \bigg]^{1/2} \to 0$$
as $\delta\to 0$. Combining this result with \eqref{prop-1.1} and \eqref{prop-1.2}, we obtain
  $$\lim_{k\to \infty} I^{N_k}_3 =0.$$

We turn to the simpler term $I^{N_k}_5$. Fix some big integer $J$. For $N_k >J$, we have
  \begin{equation}\label{prop-1.8}
  \aligned
  2 I^{N_k}_5 &\leq \E \bigg[\bigg| \sum_{j=1}^J \int_0^t \Big( \big\<\omega_s, \sigma_j \cdot \nabla (\sigma_j\cdot \nabla \phi) \big\> - \big\< \omega^{N_k}_s, \sigma_j \cdot \nabla (\sigma_j\cdot \nabla \phi)\big\> \Big) \d s \bigg|\wedge 1 \bigg] \\
  & \hskip13pt + \E \bigg[\bigg| \sum_{j=J+1}^\infty \int_0^t \big\<\omega_s, \sigma_j \cdot \nabla (\sigma_j\cdot \nabla \phi)\big\> \, \d s \bigg|\wedge 1 \bigg] \\
  &\hskip13pt + \E \bigg[\bigg| \sum_{j=J+1}^{N_k} \int_0^t \big\<\omega^{N_k}_s, \sigma_j \cdot \nabla (\sigma_j\cdot \nabla \phi)\big\> \, \d s \bigg|\wedge 1 \bigg] \\
  &=: I^{N_k}_{5,1} +I^{N_k}_{5,2} +I^{N_k}_{5,3}.
  \endaligned
  \end{equation}
Analogous to $I^{N_k}_1$ and $I^{N_k}_2$, since $\sum_{j=1}^J \sigma_j \cdot \nabla (\sigma_j\cdot \nabla \phi)$ is smooth on $\T^2$, we have
  \begin{equation}\label{prop-1.5.1}
  \lim_{k\to \infty} I^{N_k}_{5,1} =0.
  \end{equation}
Next, by Lemma \ref{lem-absolute-continuity},
  $$I^{N_k}_{5,2} \leq \sum_{j=J+1}^\infty \int_0^t \E \big| \big\<\omega_s, \sigma_j \cdot \nabla (\sigma_j\cdot \nabla \phi)\big\> \big| \, \d s \leq \|\rho_0\|_\infty \sum_{j=J+1}^\infty \int_0^t \E_\mu \big| \big\<\omega, \sigma_j \cdot \nabla (\sigma_j\cdot \nabla \phi)\big\> \big| \, \d s. $$
By Cauchy's inequality and the definition of the white noise measure $\mu$, we have
  $$\aligned
  \E_\mu \big| \big\<\omega, \sigma_j \cdot \nabla (\sigma_j\cdot \nabla \phi)\big\> \big| &\leq \Big(\E_\mu \big| \big\<\omega, \sigma_j \cdot \nabla (\sigma_j\cdot \nabla \phi)\big\> \big|^2 \Big)^{1/2} \\
  & = \bigg(\int_{\T^2} \big|\sigma_j \cdot \nabla (\sigma_j\cdot \nabla \phi) \big|^2 \,\d x \bigg)^{1/2} \\
  &\leq \|\sigma_j\|_\infty^2 \|\nabla^2\phi\|_{L^2(\T^2)} + \|\sigma_j\cdot \nabla\sigma_j\|_\infty \|\nabla \phi\|_{L^2(\T^2)}.
  \endaligned $$
Therefore, for any $k$,
  \begin{equation}\label{prop-1.5.2}
  I^{N_k}_{5,2} \leq C_\phi T \|\rho_0\|_\infty \sum_{j=J+1}^\infty \big(\|\sigma_j\|_\infty^2 + \|\sigma_j\cdot \nabla\sigma_j \|_\infty \big).
  \end{equation}
In the same way, using Lemma \ref{lem-law-point-vortices}, we can prove that, for all $N_k>J$,
  $$I^{N_k}_{5,2} \leq C'_\phi T \|\rho_0\|_\infty \sum_{j=J+1}^\infty \big(\|\sigma_j\|_\infty^2 + \|\sigma_j\cdot \nabla\sigma_j \|_\infty \big).$$
Combining this estimate with \textbf{(H2)} and \eqref{prop-1.8}--\eqref{prop-1.5.2}, first letting $k\to \infty$ in \eqref{prop-1.8}, and then $J\to \infty$, we obtain
  $$\lim_{k\to \infty} I^{N_k}_5 =0.$$

It remains to deal with the more difficult term $I^{N_k}_4$. Fix again $J\in \N$. We have, for all $N_k >J$,
  \begin{equation}\label{prop-1.4}
  \aligned
  I^{N_k}_4 & \leq \E \bigg[\bigg| \sum_{j=J+1}^\infty \int_0^t \big\<\omega_s, \sigma_j \cdot \nabla \phi\big\>\,\d W^j_s \bigg| \bigg] + \E \bigg[\bigg| \sum_{j=J+1}^{N_k} \int_0^t \big\< \omega^{N_k}_s, \sigma_j \cdot \nabla \phi\big\>\,\d  W^{N_k,j}_s \bigg| \bigg]\\
  &\hskip13pt + \E\bigg[\bigg| \sum_{j=1}^J \int_0^t \big\<\omega_s, \sigma_j \cdot \nabla \phi\big\>\,\d W^j_s - \sum_{j=1}^J \int_0^t \big\< \omega^{N_k}_s, \sigma_j \cdot \nabla \phi\big\>\,\d  W^{N_k,j}_s \bigg| \bigg]\\
  &=: I^{N_k}_{4,1} +I^{N_k}_{4,2} +I^{N_k}_{4,3}.
  \endaligned
  \end{equation}
By the Cauchy inequality and It\^o isometry,
  $$\aligned
  I^{N_k}_{4,1}&\leq \bigg\{\E \bigg[\bigg| \sum_{j=J+1}^\infty \int_0^t \big\<\omega_s, \sigma_j \cdot \nabla \phi\big\>\,\d W^j_s \bigg|^2 \bigg] \bigg\}^{1/2} = \bigg\{\int_0^t \sum_{j=J+1}^\infty \E \big\<\omega_s, \sigma_j \cdot \nabla \phi\big\>^2 \,\d s  \bigg\}^{1/2}.
  \endaligned$$
Lemma \ref{lem-absolute-continuity}  implies that
  \begin{equation}\label{prop-1.5}
  \aligned
  I^{N_k}_{4,1}&\leq \bigg\{ \int_0^t \sum_{j=J+1}^\infty \|\rho_0\|_\infty \E_\mu \big\<\omega, \sigma_j \cdot \nabla \phi\big\>^2 \, \d s  \bigg\}^{1/2}\\
  &= \sqrt{ t \|\rho_0\|_\infty}\,  \bigg(\sum_{j=J+1}^\infty \int_{\T^2} |\sigma_j \cdot \nabla \phi|^2 \, \d x\bigg)^{1/2}\\
  & \leq \sqrt{ t \|\rho_0\|_\infty}\, \|\nabla \phi\|_{L^2(\T^2)} \bigg(\sum_{j=J+1}^\infty \|\sigma_j \|_\infty^2 \bigg)^{1/2}.
  \endaligned
  \end{equation}
Similarly, by Corollary \ref{cor-moment-estimate},
  \begin{equation}\label{prop-1.6}
  \aligned
  I^{N_k}_{4,2} &\leq  \bigg\{\int_0^t \sum_{j=J+1}^{N_k} \E \big\<\omega^{N_k}_s, \sigma_j \cdot \nabla \phi\big\>^2 \,\d s  \bigg\}^{1/2} \\
  &\leq \bigg\{\int_0^t \sum_{j=J+1}^\infty C\|\rho_0\|_\infty \| \sigma_j \cdot \nabla \phi\|_\infty^2 \,\d s  \bigg\}^{1/2}\\
  &\leq \sqrt{Ct \|\rho_0\|_\infty} \,\|\nabla\phi\|_\infty  \bigg(\sum_{j=J+1}^\infty \|\sigma_j \|_\infty^2 \bigg)^{1/2}.
  \endaligned
  \end{equation}
Finally, we consider the quantity $I^{N_k}_{4,3}$. Denote by $\eta_s = \big(\big\<\omega_s, \sigma_1 \cdot \nabla \phi\big\>, \ldots, \big\<\omega_s, \sigma_J \cdot \nabla \phi\big\>\big)$; then
  $$\aligned
  \E\big(|\eta_s|^4 \big) & = \E \bigg[ \bigg( \sum_{j=1}^J \big\<\omega_s, \sigma_j \cdot \nabla \phi\big\>^2\bigg)^2\bigg] = \sum_{j,l=1}^J \E \big(\big\<\omega_s, \sigma_j \cdot \nabla \phi\big\>^2 \big\<\omega_s, \sigma_l \cdot \nabla \phi\big\>^2\big) \\
  &\leq \sum_{j,l=1}^J \big( \E \big\<\omega_s, \sigma_j \cdot \nabla \phi\big\>^4\big)^{1/2} \big( \E \big\<\omega_s, \sigma_l \cdot \nabla \phi\big\>^4\big)^{1/2} = \bigg\{ \sum_{j=1}^J \big( \E \big\<\omega_s, \sigma_j \cdot \nabla \phi\big\>^4 \big)^{1/2} \bigg\}^2.
  \endaligned$$
Again by Lemma \ref{lem-absolute-continuity},
  $$\E\big(|\eta_s|^4 \big) \leq \bigg\{ \sum_{j=1}^J \big( \|\rho_0\|_\infty C \|\sigma_j \cdot \nabla \phi\|_\infty^4 \big)^{1/2}\bigg\}^2 \leq C\|\rho_0\|_\infty \|\nabla\phi\|_\infty^4 \bigg\{ \sum_{j=1}^J  \|\sigma_j \|_\infty^2 \bigg\}^2. $$
As a result, $\int_0^T \E \big(|\eta_s|^4\big) \,\d s<\infty$. Similarly, setting $\eta^k_s = \big(\big\<\omega^{N_k}_s, \sigma_1 \cdot \nabla \phi\big\>, \ldots, \big\<\omega^{N_k}_s, \sigma_J \cdot \nabla \phi\big\>\big)$ and using Corollary \ref{cor-moment-estimate}, we can show that
  $$\sup_{k\in \N} \int_0^T \E \big(\big |\eta^k_s\big|^4 \big) \,\d s<\infty.$$
Since $\big(\omega^{N_k}_\cdot, W^{N_k}_\cdot\big)$ converge to $(\omega_\cdot, W_\cdot)$ a.s., we can apply \cite[Lemma 3.2]{Luo} to get
  $$\lim_{k\to \infty} \E\bigg[\bigg| \sum_{j=1}^J \int_0^t \big\<\omega_s, \sigma_j \cdot \nabla \phi\big\>\,\d W^j_s - \sum_{j=1}^J \int_0^t \big\< \omega^{N_k}_s, \sigma_j \cdot \nabla \phi\big\>\,\d  W^{N_k,j}_s \bigg|^2 \bigg] =0.$$
Therefore, first letting $k\to \infty$ and then $J\to \infty$ in \eqref{prop-1.4}, we deduce from the above limit and \eqref{prop-1.5}, \eqref{prop-1.6} that
  $$\lim_{k\to \infty} I^{N_k}_4 =0.$$
We have shown that all the terms $I^{N_k}_i,\, i=1, \ldots, 5$ tend to 0 as $k\to \infty$. The proof is complete.
\end{proof}

\section{Proof of Theorem \ref{thm-main-result-2}}

We prove the two assertions of Theorem \ref{thm-main-result-2} in the following two subsections respectively.

\subsection{Proof of assertion (i)}

Let $\omega_\cdot$ be a solution of the stochastic Euler equations \eqref{stoch-Euler-vorticity} given by Theorem \ref{thm-main-result}, with the associated density $\rho_\cdot$. Let $F\in \mathcal{FC}_{P,T}$ be of the form $F(t,\omega) =\sum_{i=1}^m g_i(t) f_i(\<\omega, \phi_1\>, \ldots , \<\omega, \phi_n\>)$. For every $j=1, \ldots, n$, we have
  $$\d\<\omega_t, \phi_j\> = \<\omega_t\otimes \omega_t, H_{\phi_j}\>\, \d t +\sum_{k=1}^\infty \<\omega_t,\sigma_k \cdot \nabla \phi_j\>\,\d W^k_t +\frac12 \sum_{k=1}^\infty \big\<\omega_t,\sigma_k \cdot \nabla (\sigma_k \cdot \nabla \phi_j) \big\>\,\d t.$$
To simplify the notations, we denote by $\Phi = (\phi_1, \ldots , \phi_n)$ and $\<\omega, \Phi\> = (\<\omega, \phi_1\>, \ldots , \<\omega, \phi_n\>)$. Then, the It\^o formula leads to
  $$\aligned
  \d f_i(\<\omega_t, \Phi\>) &= \sum_{j=1}^n \partial_j f_i (\<\omega_t, \Phi\>) \bigg[\<\omega_t\otimes \omega_t, H_{\phi_j}\>\, \d t +\sum_{k=1}^\infty \<\omega_t,\sigma_k \cdot \nabla \phi_j\>\,\d W^k_t\\
  &\hskip 50pt +\frac12 \sum_{k=1}^\infty \big\<\omega_t,\sigma_k \cdot \nabla (\sigma_k \cdot \nabla \phi_j) \big\>\,\d t\bigg]\\
  &\hskip13pt + \frac12 \sum_{j,l=1}^n \partial_{j,l} f_i (\<\omega_t, \Phi\>) \sum_{k=1}^\infty \<\omega_t,\sigma_k \cdot \nabla \phi_j\> \<\omega_t,\sigma_k \cdot \nabla \phi_l\> \,\d t.
  \endaligned$$
By the definition of $\<D_\omega F(t,\omega), b(\omega)\>$,
  \begin{equation}\label{proof.0}
  \aligned
  \d F(t,\omega_t)&= \sum_{i=1}^m g'_i(t)f_i(\<\omega_t, \Phi\>)\,\d t + \sum_{i=1}^m g_i(t)\,\d f_i(\<\omega_t, \Phi\>)\\
  &= \big[(\partial_t F)(t, \omega_t) + \<b(\omega_t), D_\omega F(t,\omega_t)\> \big]\,\d t + \d M(t)\\
  &\hskip13pt + \frac12 \sum_{k=1}^\infty \sum_{i=1}^m g_i(t) \bigg[\sum_{j=1}^n \partial_j f_i (\<\omega_t, \Phi\>) \big\<\omega_t,\sigma_k \cdot \nabla (\sigma_k \cdot \nabla \phi_j) \big\> \\
  &\hskip50pt + \sum_{j,l=1}^n \partial_{j,l} f_i (\<\omega_t, \Phi\>) \<\omega_t,\sigma_k \cdot \nabla \phi_j\> \<\omega_t,\sigma_k \cdot \nabla \phi_l\> \bigg] \d t,
  \endaligned
  \end{equation}
where the martingale part
  \begin{equation}\label{martingale}
  \d M(t) = \sum_{i=1}^m g_i(t) \sum_{j=1}^n \partial_j f_i (\<\omega_t, \Phi\>) \sum_{k=1}^\infty \<\omega_t,\sigma_k \cdot \nabla \phi_j\>\,\d W^k_t.
  \end{equation}

\begin{lemma}\label{sec-4.1-lem}
Assume that $G\in \mathcal{FC}_{P}$ has the form $G(\omega)= g(\<\omega, \Phi\>) = g(\<\omega, \phi_1\>, \ldots , \<\omega, \phi_n\>)$. Then
  $$\aligned
  \big\< \sigma_k \cdot \nabla \omega, D_\omega \<\sigma_k \cdot \nabla \omega, D_\omega G\> \big\>
  &= \sum_{j=1}^n \partial_j g(\<\omega, \Phi\>) \big\<\omega,\sigma_k \cdot \nabla (\sigma_k \cdot \nabla \phi_j) \big\> \\
  &\hskip13pt + \sum_{j,l=1}^n \partial_{j,l} g(\<\omega, \Phi\>) \<\omega,\sigma_k \cdot \nabla \phi_j\> \<\omega,\sigma_k \cdot \nabla \phi_l\>.
  \endaligned $$
\end{lemma}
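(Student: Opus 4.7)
The plan is to unwind the two directional derivatives in sequence, using only the definition of $D_\omega$ on $\mathcal{FC}_P$, together with the integration-by-parts identity $\<\sigma_k\cdot\nabla\omega,\psi\>=-\<\omega,\sigma_k\cdot\nabla\psi\>$ (valid because $\sigma_k$ is smooth and divergence-free), which is recalled in Remark \ref{1-rem}(1). No analytic subtlety is involved: everything takes place in the polynomial algebra generated by linear functionals of $\omega$, so the only ingredients are the chain rule and the product rule for $D_\omega$.

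First I would compute
\[
D_\omega G(\omega)=\sum_{j=1}^n \partial_j g(\<\omega,\Phi\>)\,\phi_j,
\]
pair it with $\sigma_k\cdot\nabla\omega$, and apply the integration-by-parts identity term by term to obtain
\[
H(\omega):=\<\sigma_k\cdot\nabla\omega,D_\omega G(\omega)\>=-\sum_{j=1}^n \partial_j g(\<\omega,\Phi\>)\,\<\omega,\sigma_k\cdot\nabla\phi_j\>.
\]
Observe that $H\in \mathcal{FC}_P$: it depends on $\omega$ only through the finitely many linear functionals $\<\omega,\phi_1\>,\dots,\<\omega,\phi_n\>$ and $\<\omega,\sigma_k\cdot\nabla\phi_1\>,\dots,\<\omega,\sigma_k\cdot\nabla\phi_n\>$, with coefficients that are smooth in these variables and of polynomial growth. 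In particular $D_\omega H$ is well defined.

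Next I would differentiate $H$ using the product rule. For the $j$-th summand of $H$, the factor $\partial_j g(\<\omega,\Phi\>)$ contributes $\sum_{l=1}^n \partial_{j,l} g(\<\omega,\Phi\>)\phi_l$ under $D_\omega$, while the factor $\<\omega,\sigma_k\cdot\nabla\phi_j\>$ is linear in $\omega$ and contributes $\sigma_k\cdot\nabla\phi_j$. Hence
\[
D_\omega H(\omega)=-\sum_{j,l=1}^n \partial_{j,l} g(\<\omega,\Phi\>)\,\<\omega,\sigma_k\cdot\nabla\phi_j\>\,\phi_l-\sum_{j=1}^n \partial_j g(\<\omega,\Phi\>)\,\sigma_k\cdot\nabla\phi_j.
\]
Pairing with $\sigma_k\cdot\nabla\omega$ and applying the integration-by-parts identity once more to each of the two groups of terms yields, for the first group, $-\<\sigma_k\cdot\nabla\omega,\phi_l\>=\<\omega,\sigma_k\cdot\nabla\phi_l\>$, and for the second, $-\<\sigma_k\cdot\nabla\omega,\sigma_k\cdot\nabla\phi_j\>=\<\omega,\sigma_k\cdot\nabla(\sigma_k\cdot\nabla\phi_j)\>$. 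Collecting the two resulting sums gives exactly the claimed expression.

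There is no real obstacle: the only thing to be careful about is to keep track of signs (two applications of integration by parts produce $(-1)^2=+1$) and to apply the product rule correctly, so that the "diagonal" term involving second derivatives of $\sigma_k$ (i.e.\ $\sigma_k\cdot\nabla(\sigma_k\cdot\nabla\phi_j)$) is not overlooked when the linear factor $\<\omega,\sigma_k\cdot\nabla\phi_j\>$ itself is differentiated in $\omega$. This is precisely the term that will reproduce the Itô correction in the weak formulation \eqref{thm-main-result-2.1}, and matches the second-order generator computed in \eqref{proof.0}.
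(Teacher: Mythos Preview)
Your proposal is correct and follows essentially the same route as the paper's own proof: compute $D_\omega G$, pair with $\sigma_k\cdot\nabla\omega$ using the divergence-free integration-by-parts identity to obtain $H$, then differentiate $H$ via the product rule and pair once more. The only difference is cosmetic---you add the remark that $H\in\mathcal{FC}_P$ and comment on the sign bookkeeping, but the computation is line-for-line the same.
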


\begin{proof}
Since $D_\omega G =\sum_{j=1}^n \partial_j g(\<\omega, \Phi\>) \phi_j$, we have
  $$\<\sigma_k \cdot \nabla \omega, D_\omega G\> = \sum_{j=1}^n \partial_j g(\<\omega, \Phi\>) \<\sigma_k \cdot \nabla \omega, \phi_j\> = - \sum_{j=1}^n \partial_j g(\<\omega, \Phi\>) \<\omega, \sigma_k \cdot \nabla \phi_j\>,$$
where the last equality is due to $\div(\sigma_k)=0$. Therefore,
  $$D_\omega \<\sigma_k \cdot \nabla \omega, D_\omega G\> = -\sum_{j=1}^n \<\omega, \sigma_k \cdot \nabla \phi_j\> \sum_{l=1}^n \partial_{l,j} g(\<\omega, \Phi\>) \phi_l - \sum_{j=1}^n \partial_j g(\<\omega, \Phi\>)\, (\sigma_k \cdot \nabla \phi_j).$$
As a result,
  $$\aligned
  \big\< \sigma_k \cdot \nabla \omega, D_\omega \<\sigma_k \cdot \nabla \omega, D_\omega G\>_{L^2} \big\>_{L^2}
  &= - \sum_{j,l=1}^n \partial_{l,j} g(\<\omega, \Phi\>) \<\omega, \sigma_k \cdot \nabla \phi_j\> \<\sigma_k \cdot \nabla \omega, \phi_l\> \\
  &\hskip13pt - \sum_{j=1}^n \partial_j g(\<\omega, \Phi\>) \<\sigma_k \cdot \nabla \omega, \sigma_k \cdot \nabla \phi_j\>.
  \endaligned$$
This immediately leads to the desired result by integration by parts.
\end{proof}

Using the above lemma, we obtain
  \begin{equation}\label{proof.1}
  \aligned
  \d F(t,\omega_t)&= \big[(\partial_t F)(t, \omega_t) + \<b(\omega_t), D_\omega F(t,\omega_t)\> \big]\,\d t + \d M(t)\\
  &\hskip13pt + \frac12 \sum_{k=1}^\infty \big\< \sigma_k \cdot \nabla \omega_t, D_\omega \<\sigma_k \cdot \nabla \omega_t, D_\omega F(t,\omega_t )\> \big\> \,\d t .
  \endaligned
  \end{equation}
Following the arguments in Remark \ref{1-rem-1} we can show that $M(t)$ is a square integrable martingale. Indeed, by the expression \eqref{martingale} of $M(t)$, it is sufficient to show that for each $i\in \{1,\ldots, m\}$ and $j\in \{1,\ldots, n\}$, one has
  $$I:=\sum_{k=1}^\infty \E \int_0^T \big|g_i(t) \partial_j f_i (\<\omega_t, \Phi\>) \<\omega_t,\sigma_k \cdot \nabla \phi_j\> \big|^2\,\d t <\infty.$$
Since the law of $\omega_t$ is $\rho_t \mu$ and $\|\rho_t\|_\infty \leq \|\rho_0\|_\infty$ for all $t\in [0,T]$, we have
  $$ I \leq \|g_i\|_\infty^2\|\rho_0\|_\infty T \sum_{k=1}^\infty \E_\mu \Big( \big|\partial_j f_i (\<\omega, \Phi\>) \<\omega,\sigma_k \cdot \nabla \phi_j\> \big|^2 \Big), $$
where $\E_\mu$ is the expectation on $H^{-1-}$ w.r.t. $\mu$. By Cauchy's inequality,
  $$\aligned
  I &\leq C \sum_{k=1}^\infty \Big( \E_\mu \big|\partial_j f_i (\<\omega, \Phi\>)\big|^4 \Big)^{1/2} \Big( \E_\mu \big| \<\omega, \sigma_k \cdot \nabla \phi_j\> \big|^4 \Big)^{1/2}\\
  &\leq CC_1 \sum_{k=1}^\infty \big(C_2 \|\sigma_k \cdot \nabla \phi_j\|_\infty^4 \big)^{1/2} \leq C' \| \nabla \phi_j\|_\infty^2 \sum_{k=1}^\infty \|\sigma_k\|_\infty^2 <\infty,
  \endaligned$$
where the second inequality is due to the facts that the function $\partial_j f_i$ has polynomial growth and $\<\omega, \Phi\>$ is a Gaussian random vector.

Integrating \eqref{proof.1} from $0$ and $T$ and taking expectation, we deduce from $F(T,\cdot)=0$ that
  $$\aligned
  0&= \E F(0,\omega_0) + \int_0^T \E\big[(\partial_t F)(t, \omega_t) + \<b(\omega_t), D_\omega F(t,\omega_t)\> \big]\,\d t\\
  &\hskip13pt + \frac12 \sum_{k=1}^\infty \int_0^T \E \big\< \sigma_k \cdot \nabla \omega_t, D_\omega \<\sigma_k \cdot \nabla \omega_t, D_\omega F(t,\omega_t )\> \big\> \,\d t \\
  &= \int F(0,\omega) \rho_0(\omega)\mu(\d\omega) + \int_0^T \!\int \big[(\partial_t F)(t, \omega) + \<b(\omega), D_\omega F(t,\omega)\> \big] \rho_t(\omega) \mu(\d\omega) \d t \\
  &\hskip13pt + \frac12 \sum_{k=1}^\infty \int_0^T \!\int \big\< \sigma_k \cdot \nabla \omega, D_\omega \<\sigma_k \cdot \nabla \omega, D_\omega F(t,\omega)\> \big\> \rho_t(\omega) \mu(\d\omega) \d t.
  \endaligned$$
The proof of assertion (i) is complete.

\begin{remark}\label{sec-4.1-rem}
We remark that each integral on the r.h.s. of the above equation is finite. For instance, since $\rho_t$ is bounded on $H^{-1-}$, uniformly in $t\in [0,T]$, by the assertion above Theorem \ref{thm-main-result-2}, we see that
  $$\int_0^T \!\int \big| \<b(\omega), D_\omega F(t,\omega)\> \rho_t(\omega) \big|\, \mu(\d\omega) \d t <\infty.$$
Next, to prove the finiteness of the last integral, by \eqref{proof.0} and \eqref{proof.1}, it is enough to show that for all $i\in \{1,\ldots, m\}$ and $j,l\in \{1,\ldots, n\}$,
  $$J_1:= \sum_{k=1}^\infty \int_0^T \! \int \big| \partial_j f_i (\<\omega, \Phi\>) \big\<\omega,\sigma_k \cdot \nabla (\sigma_k \cdot \nabla \phi_j) \big\> \,\rho_t(\omega) \big| \,\mu(\d\omega)\d t<\infty$$
and
  $$J_2:= \sum_{k=1}^\infty \int_0^T \! \int \big| \partial_{j,l} f_i (\<\omega, \Phi\>) \<\omega,\sigma_k \cdot \nabla \phi_j\> \<\omega,\sigma_k \cdot \nabla \phi_l\> \,\rho_t(\omega) \big| \,\mu(\d\omega)\d t<\infty.  $$
Here we only prove the second estimate. We have
  $$\aligned
  J_2 &\leq \|\rho_0\|_\infty\, T \sum_{k=1}^\infty \E_\mu \big| \partial_{j,l} f_i (\<\omega, \Phi\>) \<\omega,\sigma_k \cdot \nabla \phi_j\> \<\omega,\sigma_k \cdot \nabla \phi_l\> \big|  \\
  &\leq C\sum_{k=1}^\infty \Big(\E_\mu \big| \partial_{j,l} f_i (\<\omega, \Phi\>) \big|^2 \Big)^{1/2} \Big(\E_\mu \big| \<\omega, \sigma_k \cdot \nabla \phi_j\> \big|^4 \Big)^{1/4} \Big(\E_\mu \big| \<\omega, \sigma_k \cdot \nabla \phi_l\> \big|^4 \Big)^{1/4} \\
  & \leq C C_1 \sum_{k=1}^\infty C_2 \| \sigma_k \cdot \nabla \phi_j\|_\infty \| \sigma_k \cdot \nabla \phi_l\|_\infty <\infty,
  \endaligned$$
where the third inequality we use the fact that $\partial_{j,l} f_i $ has polynomial growth, and the last one is due to {\rm \textbf{(H2)}}.
\end{remark}

\subsection{Proof of assertion (ii)}

Our strategy is to prove the assertion in three steps:
\begin{itemize}
\item[(1)] Fix $N\in \N$. Prove the gradient estimate on $(\T^2)^N$ in the smooth case, i.e. the kernel $K$ in \eqref{stoch-point-vortex} is replaced by some smooth one $K^\delta$.
\item[(2)] Let $\delta\to 0$ to get the gradient estimate in the case of the singular Biot--Savart kernel $K$, and rewrite it in terms of the density $\rho^N_t$ of point vortices.
\item[(3)] Let $N\to \infty$ to obtain the desired result.
\end{itemize}

\textbf{Step 1: Smooth kernel $K^\delta$.} We fix $N\geq 1$ and let $K^\delta$ be the smooth kernel given in \cite[Section 3.2]{FGP}. For the moment we fix a family of vortex intensities $\xi = (\xi_1,\ldots, \xi_N)$. Consider \eqref{stoch-point-vortex} with $K$ replaced by $K^\delta$ and denote the solution flow by $X^\delta_t =\big(X^{\delta,1}_t, \ldots, X^{\delta,N}_t \big)$. It is well known that $X^\delta_t$ is a stochastic flow of diffeomorphisms on $(\T^2)^N$.

Define the vector fields $A^{(N)}_k$ on $(\T^2)^N$ as follows: for $x=(x_1, \ldots, x_N)\in (\T^2)^N$,
  $$A^{(N)}_k(x) = A^{(N)}_{\sigma_k} (x) = \big(\sigma_k(x_1), \ldots, \sigma_k(x_N)\big).$$
For simplicity we shall write $A_k,\, k\in\N$. We also define the drift vector field $A^\delta_0: (\T^2)^N \to (\R^2)^N$ by
  $$\big(A^\delta_0 \big)_i(x)= \frac1{\sqrt N} \sum_{j=1}^N \xi_j K^\delta(x_i-x_j),\quad x\in (\T^2)^N , \, 1\leq i\leq N.$$
Then the equation \eqref{stoch-point-vortex} can be simply written as
  $$\d X^\delta_t = A^\delta_0\big(X^\delta_t\big)\,\d t + \sum_{k=1}^N A_k\big(X^\delta_t\big) \circ \d W^k_t,\quad X^\delta_0= x\in (\T^2)^N.$$

\textbf{Smooth initial condition.} Let $v_0: (\T^2)^N \to \R$ be a smooth function. For $x\in (\T^2)^N$, define $v^\delta_t(x) = v_0 \big(X^{\delta, -1}_t (x)\big)$, where $X^{\delta, -1}_t$ is the inverse flow. We have (see \cite[pp. 103--106]{Bismut})
  $$\aligned\d v^\delta_t &= -\big(A^\delta_0\cdot \nabla v^\delta_t\big)\,\d t -\sum_{k=1}^N \big(A_k\cdot \nabla v^\delta_t \big)\circ \d W^k_t\\
  &= -\big(A^\delta_0\cdot \nabla v^\delta_t\big)\,\d t -\sum_{k=1}^N \big(A_k\cdot \nabla v^\delta_t\big)\, \d W^k_t +\frac12\sum_{k=1}^N \big[A_k\cdot \nabla \big(A_k\cdot \nabla v^\delta_t\big) \big] \,\d t.
  \endaligned$$
Here $\nabla= \nabla_{2N}$ is the gradient on $(\T^2)^N$. For $\phi\in C^{1,2}\big([0, T]\times (\T^2)^N\big)$, It\^o's formula leads to
  $$\d\big(\phi_t v^\delta_t \big)= \phi'_t\, v^\delta_t\,\d t -\phi_t\big(A^\delta_0\cdot \nabla v^\delta_t \big)\,\d t- \phi_t \sum_{k=1}^N \big(A_k\cdot \nabla v^\delta_t \big)\, \d W^k_t + \frac12\phi_t \sum_{k=1}^N \big[A_k\cdot \nabla \big(A_k\cdot \nabla v^\delta_t \big) \big] \,\d t.$$
Integrating from 0 to $T$ yields
  \begin{equation}\label{sec-4.1}
  \aligned
  \phi_T v^\delta_T &= \phi_0 v^\delta_0 + \int_0^T \phi'_t\, v^\delta_t\,\d t -\int_0^T \phi_t\,\big(A^\delta_0\cdot \nabla v^\delta_t \big)\,\d t - \sum_{k=1}^N \int_0^T \phi_t\, \big(A_k\cdot \nabla v^\delta_t \big)\, \d W^k_t\\
  &\hskip13pt + \frac12\sum_{k=1}^N \int_0^T\phi_t\, \big[A_k\cdot \nabla \big(A_k\cdot \nabla v^\delta_t \big) \big] \,\d t.
  \endaligned
  \end{equation}

Define
  $$u^\delta_t(x)= \E v^\delta_t(x)= \E\big[ v_0 \big(X^{\delta, -1}_t (x)\big) \big],\quad (t,x)\in [0,T]\times (\T^2)^N.$$
Denote by $\<\cdot, \cdot\>$ the inner product in $L^2\big((\T^2)^N, \lambda_N\big)$, where $\lambda_N = {\rm Leb}_{\T^2}^{\otimes N} $. By integrating \eqref{sec-4.1} on $(\T^2)^N$ and taking expectation we obtain
  \begin{equation}\label{eq-1}
  \aligned
  \big\<\phi_T, u^\delta_T \big\>&= \big\<\phi_0, u^\delta_0 \big\> + \int_0^T \big\<\phi'_t, u^\delta_t \big\>\,\d t -\int_0^T \big\<\phi_t, A^\delta_0\cdot \nabla u^\delta_t \big\>\,\d t\\
  &\hskip13pt + \frac12 \sum_{k=1}^N \int_0^T \big\<\phi_t, A_k\cdot \nabla \big(A_k\cdot \nabla u^\delta_t \big) \big\> \,\d t.
  \endaligned
\end{equation}
Note that $u^\delta_\cdot\in C^1\big([0,T], C^\infty\big((\T^2)^N \big) \big)$. Choosing $\phi_\cdot = u^\delta_\cdot$ leads to
  $$\aligned \big\|u^\delta_T \big\|_{L^2}^2&= \big\|u^\delta_0 \big\|_{L^2}^2 + \frac12 \int_0^T \frac\d{\d t} \big\|u^\delta_t \big\|_{L^2}^2\,\d t -\int_0^T \big\<u^\delta_t, A^\delta_0\cdot \nabla u^\delta_t \big\>\,\d t\\
  &\hskip13pt + \frac12 \sum_{k=1}^N \int_0^T \big\<u^\delta_t, A_k\cdot \nabla \big(A_k\cdot \nabla u^\delta_t \big) \big\> \,\d t.
  \endaligned$$
Since $\div_{2N}\big(A^\delta_0 \big)=0$, the third term on the r.h.s. vanishes. Applying the integration by parts in the last term yields
  \begin{equation}\label{gradient-estimate-1}
  \sum_{k=1}^N \int_0^T \big\|A_k\cdot \nabla u^\delta_t \big\|_{L^2}^2 \,\d t = \big\|u^\delta_0 \big\|_{L^2}^2 - \big\|u^\delta_T \big\|_{L^2}^2 \leq \|v_0\|_\infty^2.
  \end{equation}
Therefore, we obtain the gradient estimate in the case that $v_0\in C^\infty\big( (\T^2)^N \big)$.

\textbf{Continuous initial condition.} Assume now $v_0 \in C\big( (\T^2)^N \big)$. We take a sequence of smooth functions $v_n$ which converge uniformly to $v_0$, such that $\|v_n\|_\infty \leq \|v_0\|_\infty$. Then $u^{\delta,n}_t(x)= \E\big[ v_n\big(X^{\delta, -1}_t (x)\big) \big]$ satisfies \eqref{gradient-estimate-1}, i.e.
  \begin{equation}\label{eq-2}
  \sum_{k=1}^N \int_0^T \big\|A_k\cdot \nabla u^{\delta,n}_t \big\|_{L^2}^2 \,\d t \leq \|v_n\|_\infty^2 \leq \|v_0\|_\infty^2.
  \end{equation}
Define the set of integers $S_N = \{1,\ldots, N\}$. The above inequality shows that the sequence $\big\{\big(A_k\cdot \nabla u^{\delta,n}_t\big)(x)\, |\, (k,t,x)\in S_N \times [0,T] \times (\T^2)^N \big\}_{n\in \N}$ is bounded in $L^2\big(S_N \times [0,T] \times (\T^2)^N, \#\otimes \d t \otimes \lambda_N\big)$, where $\#$ is the counting measure on $S_N$ and $\lambda_N = {\rm Leb}_{\T^2}^{\otimes N }$. We denote this Hilbert space by $L^2\big(S_N \times [0,T] \times (\T^2)^N\big)$ for simplicity. Then, there exists a subsequence $\big\{\big(A_k\cdot \nabla u^{\delta,n_i}_t\big)(x) \big\}_{i\in \N}$ which converges weakly to some $\alpha^\delta \in L^2\big(S_N \times [0,T] \times (\T^2)^N\big)$, satisfying
  \begin{equation}\label{eq-2.5}
  \sum_{k=1}^N \int_0^T \big\|\alpha^\delta_k(t) \big\|_{L^2}^2 \,\d t \leq \|v_0\|_\infty^2.
  \end{equation}

Define the space of test functions by
  \begin{equation}\label{test-functions}
  \aligned
  \mathcal{C}_T(N)= \big\{\beta = (\beta_1, \ldots, \beta_N) \,|\, \beta_k\in C^{0,1}\big([0,T]\times (\T^2)^N \big) \mbox{ for all } 1\leq k\leq N \big\}.
  \endaligned
  \end{equation}
Then for any $\beta \in \mathcal{C}_T(N) $,
  $$\lim_{i\to \infty} \sum_{k=1}^N \int_0^T \!\! \int_{(\T^2)^N}  \big(A_k\cdot \nabla u^{\delta,n_i}_t\big)(x) \beta_k(t,x) \,\d x\d t = \sum_{k=1}^N \int_0^T \!\! \int_{(\T^2)^N} \alpha^\delta_k(t,x) \beta_k(t,x) \,\d x\d t.$$
Using the fact that $\div_{2N}(A_k) \equiv 0$ and integrating by parts give us
  $$\aligned
  \sum_{k=1}^N \int_0^T \!\! \int_{(\T^2)^N}  \big(A_k\cdot \nabla u^{\delta,n_i}_t\big)(x) \beta_k(t,x) \,\d x\d t
  &= - \sum_{k=1}^N \int_0^T \!\! \int_{(\T^2)^N}  u^{\delta,n_i}_t(x) (A_k\cdot \nabla \beta_k(t) )(x) \,\d x\d t \\
  &\to - \sum_{k=1}^N \int_0^T \!\! \int_{(\T^2)^N}  u^\delta_t(x) (A_k\cdot \nabla \beta_k(t) )(x) \,\d x\d t
  \endaligned$$
as $i\to \infty$, since $v_{n_i}$ converges uniformly to $v_0$. Here, $u^{\delta}_t(x)= \E\big[ v_0\big(X^{\delta, -1}_t (x)\big) \big]$. Summarizing the two limits above yields
  \begin{equation}\label{eq-3}
  \sum_{k=1}^N \int_0^T \!\! \int_{(\T^2)^N} \alpha^\delta_k(t,x) \beta_k(t,x) \,\d x\d t = - \sum_{k=1}^N \int_0^T \!\! \int_{(\T^2)^N}  u^\delta_t(x) (A_k\cdot \nabla \beta_k(t) )(x) \,\d x\d t,
  \end{equation}
which holds for any $\beta \in \mathcal{C}_T(N) $. This equality implies the weak limit $\alpha^\delta$ is independent on the choices of the sequence $\{v_n\}_{n\in \N}$ of smooth initial conditions and the subsequence $\{n_i\}_{i\in \N}$. Moreover, for any fixed $k\in S_N$, taking $\beta \in \mathcal{C}_T(N)$ such that $\beta_j \equiv 0$ for all $j\neq k$, we obtain
  $$\int_0^T \!\! \int_{(\T^2)^N} \alpha^\delta_k(t,x) \beta_k(t,x) \,\d x\d t = - \int_0^T \!\! \int_{(\T^2)^N}  u^\delta_t(x) (A_k\cdot \nabla \beta_k(t) )(x) \,\d x\d t.$$
Since the vector fields $\{A_k\}_{k\in S_N}$ are divergence free, we see that the following equalities
  \begin{equation}\label{eq-4}
  A_k\cdot \nabla u^\delta_\cdot  = \alpha^\delta_k, \quad k\in S_N
  \end{equation}
hold in the distributional sense. Combining this fact with \eqref{eq-2.5} yields the gradient estimate
  \begin{equation}\label{gradient-estimate-3}
  \sum_{k=1}^N \int_0^T \big\|A_k\cdot \nabla u^{\delta}_t \big\|_{L^2}^2 \,\d t \leq \|v_0\|_\infty^2.
  \end{equation}

\medskip

\textbf{Step 2: Non-smooth kernel $K$.} In this step we aim to extend the gradient estimate to the case where $K$ is the singular Biot--Savart kernel.  The proof is similar to the passage to the limit from smooth initial conditions to continuous ones.

Let $v_0\in C\big((\T^2)^N, \R_+ \big)$ be the initial probability density function of $X^\delta_0$. For any nonnegative continuous function $F$ on $(\T^2)^N$, we have
  \begin{equation}\label{density}
  \aligned
  \E \big[F\big( X^\delta_t\big)\big]& = \int_{(\T^2)^N} \E \big[F \big( X^\delta_t(x)\big)\big] v_0(x) \,\d x\\
  &= \E \int_{(\T^2)^N} F(y) v_0\big( X^{\delta,-1}_t(y)\big) \,\d y \leq \|v_0\|_\infty \int_{(\T^2)^N} F(y) \,\d y,
  \endaligned
  \end{equation}
where the second equality is due to the fact that $X^\delta_t$ preserves the volume measure of $(\T^2)^N$. By the proof of \cite[Theorem 8]{FGP}, for $\lambda_N$-a.e. $x\in (\T^2)^N$, we have a.s. $X^\delta_t(x)\to X_t(x)$ as $\delta \to 0$ for all $t\in [0,T]$. The dominated convergence theorem yields
  \begin{equation}\label{convergence}
  \aligned \lim_{\delta\to 0}\E \big[F\big( X^\delta_t\big)\big] &= \lim_{\delta\to 0}\int_{(\T^2)^N} \E \big[F \big( X^\delta_t(x)\big)\big] v_0(x) \,\d x\\
  &= \int_{(\T^2)^N} \E \big[F \big( X_t(x)\big)\big] v_0(x) \,\d x= \E F(X_t).\endaligned
  \end{equation}
Combining \eqref{density} and \eqref{convergence} we conclude that the law $\mu_t$ of $X_t$ is absolutely continuous w.r.t. $\lambda_N = {\rm Leb}_{\T^2}^{\otimes N }$, with a density function $u_t$ bounded by $\|v_0\|_\infty$. Moreover, the second equality in \eqref{density} shows that $u^\delta_t(x)= \E\big[ v_0\big( X^{\delta,-1}_t(x)\big) \big]$ is the density function of $X^\delta_t$. For general bounded continuous function $F$ on $(\T^2)^N$, analogous to \eqref{convergence}, we have
  \begin{equation}\label{eq-8}
  \lim_{\delta\to 0} \int_{(\T^2)^N} F(x) u^\delta_t(x)\,\d x= \int_{(\T^2)^N} F(x) u_t(x)\,\d x
  \end{equation}
which means that $ u^\delta_t$ converges weakly to $u_t$ as $\delta\to 0$.

Recall that $S_N= \{1,\ldots, N\}$. By \eqref{gradient-estimate-3}, the family $\big\{ \big(A_k\cdot \nabla u^{\delta}_t \big)(x)\, |\, (k,t,x) \in S_N \times [0,T] \times (\T^2)^N \big\}_{\delta>0}$ is bounded in the Hilbert space $L^2\big(S_N \times [0,T] \times (\T^2)^N \big)$. Thus, there exists a subsequence $\big\{ \big(A_k\cdot \nabla u^{\delta_i}_t \big)(x) \big\}_{i\in \N}$ which converges weakly to some function $\alpha \in L^2\big( S_N \times [0,T] \times (\T^2)^N \big)$, satisfying
  \begin{equation}\label{eq-9}
  \sum_{k=1}^N \int_0^T \|\alpha_k(t)\|_{L^2}^2 \,\d t \leq \|v_0\|_\infty^2.
  \end{equation}
Consequently, for any $\beta \in \mathcal{C}_T(N) $ (see \eqref{test-functions}),
  $$\lim_{i\to \infty} \sum_{k=1}^N \int_0^T \!\! \int_{(\T^2)^N} \big(A_k\cdot \nabla u^{\delta_i}_t \big)(x) \beta_k(t,x) \,\d x\d t = \sum_{k=1}^N \int_0^T \!\! \int_{(\T^2)^N} \alpha_k(t,x) \beta_k(t,x) \,\d x\d t.$$
By \eqref{eq-3} and \eqref{eq-4},
  $$\aligned
  \sum_{k=1}^N \int_0^T \!\! \int_{(\T^2)^N} \big(A_k\cdot \nabla u^{\delta_i}_t \big)(x) \beta_k(t,x) \,\d x\d t &= - \sum_{k=1}^N \int_0^T \!\! \int_{(\T^2)^N} u^{\delta_i}_t(x) (A_k\cdot \nabla \beta_k(t))(x) \,\d x\d t\\
  &\to - \sum_{k=1}^N \int_0^T \!\! \int_{(\T^2)^N} u_t(x) (A_k\cdot \nabla \beta_k(t))(x) \,\d x\d t
  \endaligned$$
as $i\to \infty$, where the last step follows from \eqref{eq-8}. Combining the two limits above yields that, for any $\beta \in \mathcal{C}_T(N) $,
  \begin{equation}\label{equation-1}
  \sum_{k=1}^N \int_0^T \!\! \int_{(\T^2)^N} \alpha_k(t,x) \beta_k(t,x) \,\d x\d t = - \sum_{k=1}^N \int_0^T \!\! \int_{(\T^2)^N} u_t(x) (A_k\cdot \nabla \beta_k(t))(x) \,\d x\d t.
  \end{equation}
As above, we deduce from this equality that $\alpha$ does not depend on the choice of the subsequence $\big\{ \big(A_k\cdot \nabla u^{\delta_i}_t \big)(x) \big\}_{i\in \N}$, and for all $k\in S_N$,
  \begin{equation}\label{equation-2}
  A_k\cdot \nabla u_\cdot = \alpha_k
  \end{equation}
holds in the distribution sense. Moreover, we deduce from \eqref{eq-9} the gradient estimate
  \begin{equation}\label{gradient-estimate-2.6}
  \sum_{k=1}^N \int_0^T \big\|A_k\cdot \nabla u_t \big\|_{L^2}^2 \,\d t \leq \|v_0\|_\infty^2.
  \end{equation}

\textbf{Random intensity vector $\xi=(\xi_1,\ldots, \xi_N)$.} In the above discussions we assume the intensity vector $\xi= (\xi_1, \ldots, \xi_N)$ is fixed. To be more precise, we shall write in the sequel $X^\xi_t(x) = \big(X^{\xi,1}_t(x), \ldots, X^{\xi,N}_t(x)\big)$ for the strong solution of \eqref{stoch-point-vortex} which is well defined for a.e. $x\in \Delta_N^c$, and $u^\xi_t$ the density of $X^\xi_t$ starting from the initial density $v_0 \in C \big((\T^2)^N\big)$.

Now we suppose $\xi$ is a random vector and the joint law of $(\xi, X_0)$ is
  $$\rho(a,x) \lambda_N^0(\d a, \d x) = \rho(a,x) p_N(a)\,\d a \d x,$$
where $\rho: (\R\times \T^2)^N \to \R_+$ is a bounded continuous probability density function w.r.t. $\lambda_N^0$, and $p_N(a)= (2\pi)^{-N/2} e^{-|a|^2/2}$. Then the marginal distribution of $\xi$ is
  $$\tilde p(a)= p_N(a) \int_{(\T^2)^N} \rho(a,x)\,\d x,$$
and the conditional distribution of $X_0$ given $\xi=a$ is
  \begin{equation}\label{conditional-density}
  v_a(x)= \frac{\rho(a,x) p_N(a)}{\tilde p(a)} = \frac{\rho(a,x)}{\int_{(\T^2)^N} \rho(a,x)\,\d x}.
  \end{equation}
Therefore, under the probability measure $\rho(a,x) \lambda_N^0(\d a, \d x)$ and given $\xi=a$, $v_a$ is the initial density of $X^a_0$. Let $u^a_t$ be the density of $X^a_t$ when the initial density of $X^a_0$ is given by $v_a(x)$ in \eqref{conditional-density}. According to \eqref{equation-1} and \eqref{equation-2}, for all $k\in \{1,\ldots, N\}$ and $\beta \in C^{0,1}\big([0,T]\times (\T^2)^N \big)$,
  \begin{equation}\label{equation-3}
  \int_0^T \!\! \int_{(\T^2)^N} (A_k\cdot \nabla u^a_t)(x) \beta(t,x) \,\d x\d t = - \int_0^T \!\! \int_{(\T^2)^N} u^a_t(x) (A_k\cdot \nabla \beta(t))(x) \,\d x\d t.
  \end{equation}
Applying \eqref{gradient-estimate-2.6} leads to
  \begin{equation}\label{gradient-estimate-6}
  \sum_{k=1}^N \int_0^T \big\|A_k\cdot \nabla u^a_t \big\|_{L^2}^2 \,\d t \leq \|v_a \|_\infty^2.
  \end{equation}
We remark that if $\tilde p(a)=0$ for some $a\in \R^N$, then $\rho(a,x) =0$ for all $x\in (\T^2)^N$ since $\rho$ is continuous. In this case it is natural to set $v_a(x) \equiv u^a_t(x) \equiv 0$, and the properties \eqref{equation-3} and \eqref{gradient-estimate-6} hold as well.

Now we compute the joint law of $\big(\xi, X^\xi_t \big)$ when the initial variables $(\xi, X_0)$ are distributed as $\rho(a,x) \lambda_N^0(\d a, \d x)$. For any bounded measurable function $F$ on $(\R\times \T^2)^N$,
  $$\aligned
  \E \big[ F\big(\xi, X^\xi_t\big)\big] &= \int_{\R^N} \E \big[ F\big(\xi, X^\xi_t\big) | \xi=a \big] \tilde p(a) \,\d a = \int_{\R^N} \E \big[ F(a, X^a_t ) \big] \tilde p(a) \,\d a \\
  &= \int_{\R^N} \int_{(\T^2)^N} F(a,x) u^a_t(x) \tilde p(a) \, \d x\d a.
  \endaligned$$
Thus, the joint distribution of $\big(\xi, X^\xi_t \big)$ is $u^a_t(x) \tilde p(a)\, \d x\d a$, and its density w.r.t. $\lambda_N^0$ is
  $$\tilde u_t(a,x)= u^a_t(x) \int_{(\T^2)^N} \rho(a,x)\,\d x.$$

Now we can transfer the property \eqref{equation-3} and the gradient estimate \eqref{gradient-estimate-6} to the density $\tilde u_t(a,x)$. First, for any $\beta \in C^{0,1}\big([0,T]\times (\T^2)^N \big)$, multiplying both sides of \eqref{equation-3} by $\int_{(\T^2)^N} \rho(a,x)\,\d x$ and integrating on $\R^N$ w.r.t. $p_N(a)\,\d a$ lead to
  \begin{equation}\label{equation-4}
  \aligned
  &\hskip13pt \int_0^T \!\! \int_{(\R\times \T^2)^N} \big(A_k(x)\cdot \nabla_{2N}\, \tilde u_t(a,x) \big) \beta(t,x) \,\lambda_N^0(\d a,\d x)\d t \\
  &= - \int_0^T \!\! \int_{(\R\times \T^2)^N} \tilde u_t(a,x) (A_k(x)\cdot \nabla_{2N} \beta(t,x)) \,\lambda_N^0(\d a,\d x)\d t,
  \endaligned
  \end{equation}
where $\nabla_{2N}$ is the gradient w.r.t. the $x$ variable. Next, multiplying both sides of \eqref{gradient-estimate-6} by $\big(\int_{(\T^2)^N} \rho(a,x)\,\d x \big)^2$, we obtain
  $$\aligned
  \sum_{k=1}^N \int_0^T \!\int_{(\T^2)^N} \big(A_k\cdot \nabla_{2N}\, \tilde u_t(a,\cdot) \big)^2 \, \d x\d t
  &\leq \bigg(\int_{(\T^2)^N} \rho(a,x)\,\d x\bigg)^2 \frac{\|\rho(a,\cdot)\|_\infty^2}{\big(\int_{(\T^2)^N} \rho(a,x)\,\d x \big)^2} \\
  &=  \|\rho(a,\cdot)\|_\infty^2,
  \endaligned$$
Integrating w.r.t. $p_N(a)\,\d a$ on $\R^N$ yields
  \begin{equation}\label{gradient-estimate-7}
  \sum_{k=1}^N \int_0^T \big\|A_k\cdot \nabla_{2N}\, \tilde u_t\big\|_{L^2(\lambda_N^0)}^2 \,\d t \leq  \|\rho\|_\infty^2.
  \end{equation}

\textbf{Transfer to gradient estimate on the density of vorticity.}

Given a bounded continuous function $\rho_0: \mathcal M_N(\T^2) \to \R_+$ such that $\int_{\mathcal M_N(\T^2)} \rho_0\, \d \mu_N^0 =1$. We consider the stochastic point vortex dynamics \eqref{stoch-point-vortex} with $(\xi, X_0)$ distributed as $(\rho_0\circ \mathcal T_N)(a,x) \lambda_N^0(\d a,\d x)$, where $\mathcal T_N$ is defined in \eqref{mapping}. Denoting again by $\tilde u_t(a,x)$ the joint density of $\big(\xi, X^\xi_t\big)$ w.r.t. $\lambda_N^0$, the gradient estimate \eqref{gradient-estimate-7} becomes
  \begin{equation}\label{gradient-estimate-8}
  \sum_{k=1}^N \int_0^T \big\|A_k\cdot \nabla_{2N}\, \tilde u_t\big\|_{L^2(\lambda_N^0)}^2 \,\d t \leq  \|\rho_0\|_\infty^2.
  \end{equation}

We intend to transform the above gradient estimate to the density function $\rho^N_t$ of
  $$\omega^N_t = \frac1{\sqrt N} \sum_{i=1}^N \xi_i \delta_{X^{\xi,i}_t}.$$
The existence of $\rho^N_t$ is due to Lemma \ref{lem-law-point-vortices}. We shall show that, for every $k\in S_N= \{1, \ldots, N\}$, $\big \<\sigma_k\cdot \nabla \omega, D_\omega \rho^N_t \big\>$ exists in the distributional sense, that is, there exists some $g_k\in L^2 \big([0,T]\times \mathcal M_N, \d t \otimes\mu_N^0 \big)$ such that for all $f\in \mathcal{FC}_{P,T}$,
  \begin{equation}\label{eq-5}
  \int_0^T\! \int_{\mathcal M_N} g_k(t,\omega) f(t,\omega)\,\mu_N^0(\d\omega)\d t = - \int_0^T\! \int_{\mathcal M_N} \rho^N_t(\omega) \big \<\sigma_k\cdot \nabla \omega, D_\omega f(t,\omega) \big\> \,\mu_N^0(\d\omega)\d t,
  \end{equation}
where $\mathcal M_N= \mathcal M_N(\T^2)$ and $D_\omega f(t,\omega)$ is defined before Theorem \ref{thm-main-result-2}.  $\mathcal{FC}_{P,T}$ is the family of test functionals defined in the introduction, which can also be regarded as smooth functionals on $\mathcal M_N$. To this end, we need the following simple facts.

\begin{lemma}\label{lem-gradient}
For any $G\in \mathcal{FC}_P$, under the map $(\R\times \T^2)^N \ni (a,x)\to \omega= \mathcal T_N(a, x) \in \mathcal M_N$,
  $$A_k(x) \cdot \nabla_{2N} (G\circ \mathcal T_N)(a, x)= - \big\<\sigma_k\cdot \nabla \omega, D_\omega G \big\>.$$
Moreover, $\div_{\mu_N^0} (\sigma_k\cdot \nabla \omega)=0$ in the sense of distribution; that is, for all $G\in \mathcal{FC}_P$,
  \begin{equation}\label{eq-divergence}
  \int_{\mathcal M_N} \big\<\sigma_k\cdot \nabla \omega, D_\omega G\big\>\,\d \mu_N^0=0.
  \end{equation}
\end{lemma}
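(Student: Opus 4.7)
The plan is to handle the two claims in sequence: the first is a pointwise computation reducing to the chain rule on finite-dimensional coordinates, while the second follows from the first by a change of variables and integration by parts on the torus.

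For the first claim, I would take an arbitrary $G\in\mathcal{FC}_P$ written as $G(\omega)=g(\langle\omega,\phi_1\rangle,\ldots,\langle\omega,\phi_n\rangle)$ and compute both sides explicitly. Under $\mathcal T_N$, one has $\langle\mathcal T_N(a,x),\phi_j\rangle=\frac{1}{\sqrt N}\sum_{i=1}^N a_i\phi_j(x_i)$, so by the chain rule
\[
\nabla_{x_i}(G\circ\mathcal T_N)(a,x)=\sum_{j=1}^n(\partial_j g)(\langle\omega,\Phi\rangle)\,\frac{a_i}{\sqrt N}\,\nabla\phi_j(x_i),
\]
and summing against $A_k(x)=(\sigma_k(x_1),\ldots,\sigma_k(x_N))$ gives
\[
A_k(x)\cdot\nabla_{2N}(G\circ\mathcal T_N)(a,x)=\sum_{j=1}^n(\partial_j g)\,\frac{1}{\sqrt N}\sum_{i=1}^N a_i(\sigma_k\cdot\nabla\phi_j)(x_i)=\sum_{j=1}^n(\partial_j g)\,\langle\omega,\sigma_k\cdot\nabla\phi_j\rangle.
\]
On the other hand, using $D_\omega G=\sum_j(\partial_j g)\,\phi_j$ together with the convention $\langle\sigma_k\cdot\nabla\omega,\phi\rangle=-\langle\omega,\sigma_k\cdot\nabla\phi\rangle$ from Remark~\ref{1-rem} (valid since $\sigma_k$ is smooth and divergence free), one gets $\langle\sigma_k\cdot\nabla\omega,D_\omega G\rangle=-\sum_j(\partial_j g)\langle\omega,\sigma_k\cdot\nabla\phi_j\rangle$, which matches the right-hand side up to the sign.

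For the divergence identity, I would push the integral over $\mathcal M_N$ back to $(\R\times\T^2)^N$ via $\mathcal T_N$. By Lemma~\ref{lem-change-variable} (with $\rho\equiv 1$) the measure $\mu_N^0$ is the image of $\lambda_N^0=p_N(a)\,\d a\otimes\d x$ under $\mathcal T_N$, so the first part yields
\[
\int_{\mathcal M_N}\langle\sigma_k\cdot\nabla\omega,D_\omega G\rangle\,\d\mu_N^0
=-\int_{\R^N}p_N(a)\!\int_{(\T^2)^N}A_k(x)\cdot\nabla_{2N}(G\circ\mathcal T_N)(a,x)\,\d x\,\d a.
\]
Since each $\sigma_k$ is divergence free on $\T^2$, the lifted field $A_k$ is divergence free on $(\T^2)^N$, and since $(\T^2)^N$ has no boundary, integration by parts gives $\int_{(\T^2)^N}A_k\cdot\nabla_{2N}(G\circ\mathcal T_N)\,\d x=0$ for each fixed $a$. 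Integrating against $p_N(a)\,\d a$ yields the claim.

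The only point that needs a line of justification is integrability: since $G\in\mathcal{FC}_P$ has polynomial growth in $\langle\omega,\phi_j\rangle$ and those linear functionals are Gaussian under $\mu_N^0$ (equivalently, linear combinations of the $a_i$'s with bounded coefficients under $\lambda_N^0$), the polynomial moments are all finite, making the change of variables and the fiberwise integration by parts legitimate. I do not expect any conceptual obstacle here — the lemma is essentially a structural compatibility between the gradient $\nabla_{2N}$ on $(\T^2)^N$ and the functional derivative $D_\omega$ on $\mathcal M_N$, transported by $\mathcal T_N$.
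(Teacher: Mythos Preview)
Your proof is correct and follows essentially the same route as the paper: both establish the first identity by direct chain-rule computation of $A_k\cdot\nabla_{2N}(G\circ\mathcal T_N)$ and comparison with $\langle\sigma_k\cdot\nabla\omega,D_\omega G\rangle=-\sum_j(\partial_j g)\langle\omega,\sigma_k\cdot\nabla\phi_j\rangle$, and then deduce the divergence identity by pushing the integral back through $\mathcal T_N$ and integrating by parts on $(\T^2)^N$ using $\div_{2N}(A_k)=0$. Your remark on integrability is a reasonable extra line of care not made explicit in the paper.
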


\begin{proof}
Assume that $G\in \mathcal{FC}_P$ has the form $G(\omega)= g(\<\omega, \phi_1\>, \ldots, \<\omega, \phi_n\>)$; then
  $$(G\circ \mathcal T_N)(a, x)= g\bigg(\frac1{\sqrt N} \sum_{i=1}^N a_i \phi_1(x_i), \ldots, \frac1{\sqrt N} \sum_{i=1}^N a_i \phi_n(x_i) \bigg).$$
Recall the notation $\<\omega, \Phi\> = (\<\omega, \phi_1\>, \ldots, \<\omega, \phi_n\>)$ used in Section 4.1. By direct computation,
  \begin{align*}
  A_k(x) \cdot \nabla_{2N} (G\circ \mathcal T_N)(a, x)&= \sum_{i=1}^N \sigma_k(x_i) \cdot \partial_{x_i} (G\circ \mathcal T_N)(a, x) \\
  &= \sum_{i=1}^N \sigma_k(x_i) \cdot\bigg(\sum_{j=1}^n \partial_j g(\<\omega, \Phi\>) \frac{a_i}{\sqrt N} \nabla\phi_j(x_i)\bigg) \\
  &= \sum_{j=1}^n \partial_j g(\<\omega, \Phi\>) \<\omega, \sigma_k \cdot\nabla \phi_j\> = - \big\<\sigma_k\cdot \nabla \omega, D_\omega G \big\>.
  \end{align*}

Now we can prove the second assertion. We have
  $$\aligned \int_{\mathcal M_N} \big\<\sigma_k\cdot \nabla \omega, D_\omega G\big\>\,\mu_N^0(\d \omega)
  & = -\int_{(\R \times \T^2)^N} A_k(x) \cdot \nabla_{2N} (G\circ \mathcal T_N)(a, x) \, \lambda_N^0(\d a,\d x) \\
  & = -\int_{\R^N} p_N(a)\,\d a \int_{(\T^2)^N} A_k(x) \cdot \nabla_{2N} (G\circ \mathcal T_N)(a, x) \, \d x \\
  & = 0,
  \endaligned$$
since $\div_{2N} (A_k)(x)=0$. Here $p_N(a)$ is the density function of the standard Gaussian distribution on $\R^N$.
\end{proof}

Note that the law $\mu^N_t(\d\omega)= \rho^N_t(\omega) \mu_N^0(\d\omega)$ of $\omega^N_t$ is the image of that of $\big(\xi, X^\xi_t\big)$ under the map $\mathcal T_N: (\R\times \T^2)^N \to \mathcal M_N$. Therefore,
  \begin{equation*}
  \aligned
  &\int_0^T \! \int_{\mathcal M_N} \rho^N_t(\omega) \big \<\sigma_k\cdot \nabla \omega, D_\omega f(t) \big\> \,\mu_N^0(\d\omega)\d t\\
  = & \int_0^T \! \int_{(\R\times \T^2)^N} \big \<\sigma_k\cdot \nabla \omega, D_\omega f(t) \big\>|_{\omega= \mathcal T_N(a,x)}\, \tilde u_t(a,x)\, \lambda_N^0(\d a ,\d x) \d t\\
  = & - \int_0^T \! \int_{(\R\times \T^2)^N} \big[A_k(x) \cdot \nabla_{2N} \big(f(t)\circ \mathcal T_N \big)(a,x)\big]  \tilde u_t(a,x)\, \lambda_N^0(\d a ,\d x)\d t,
  \endaligned
  \end{equation*}
where in the last step we applied Lemma \ref{lem-gradient}. The integration by parts formula \eqref{equation-4} yields
  \begin{equation}\label{eq-6} \aligned
  &\int_0^T \! \int_{\mathcal M_N} \rho^N_t(\omega) \big \<\sigma_k\cdot \nabla \omega, D_\omega f(t) \big\> \,\mu_N^0(\d\omega)\d t\\
  = & \int_0^T \! \int_{(\R\times \T^2)^N} \big(f(t)\circ \mathcal T_N \big)(a,x)  \big[A_k(x) \cdot \nabla_{2N}\, \tilde u_t(a,x)\big]\, \lambda_N^0(\d a ,\d x) \d t\\
  = & \int_0^T \! \int_{(\R\times \T^2)^N} \big(f(t)\circ \mathcal T_N \big)(a,x)\,  \E\big(\big[A_k(x) \cdot \nabla_{2N}\, \tilde u_t(a,x)\big] \big| \mathcal G \big)\, \lambda_N^0(\d a ,\d x) \d t,
  \endaligned
  \end{equation}
where the conditional expectation is taken w.r.t. the probability measure $\lambda_N^0$, and $\mathcal G$ is the sub-$\sigma$-field of the Borel field of $(\R\times \T^2)^N$ defined as
  \begin{equation*}
  \mathcal G= \sigma\big( \big\{ F\circ \mathcal T_N \,|\, F: \mathcal M_N \to \R \mbox{ is measurable} \big\} \big).
  \end{equation*}
There exists some $g_k(t): \mathcal M_N\to \R$ such that
  \begin{equation}\label{eq-6.5}
  \E\big(\big[A_k(x) \cdot \nabla_{2N}\, \tilde u_t(a,x)\big] \big| \mathcal G \big) = -\big(g_k(t) \circ \mathcal T_N\big)(a,x)\quad \mbox{for all } k\in \{1,\ldots, N \}.
  \end{equation}
By the property of conditional expectation,
  $$\|g_k(t)\|_{L^2(\mu_N^0)} = \|g_k(t) \circ \mathcal T_N\|_{L^2(\lambda_N^0)} \leq \big\|A_k\cdot \nabla_{2N} \, \tilde u_t \big\|_{L^2(\lambda_N^0)}. $$
Combining this with the gradient estimate \eqref{gradient-estimate-8}, we have
  \begin{equation}\label{eq-7}
  \sum_{k=1}^N \int_0^T \|g_k(t)\|_{L^2(\mu_N^0)}^2\, \d t \leq \|\rho_0\|_\infty^2.
  \end{equation}

Substituting \eqref{eq-6.5} into \eqref{eq-6}, we obtain
  $$\aligned
  &\int_0^T \! \int_{\mathcal M_N} \rho^N_t(\omega) \big \<\sigma_k\cdot \nabla \omega, D_\omega f(t) \big\> \,\mu_N^0(\d\omega)\d t\\
  = & -\int_0^T \! \int_{(\R\times \T^2)^N} \big(f(t)\circ \mathcal T_N \big) (a,x) \big(g_k(t)\circ \mathcal T_N \big)(a,x)\, \lambda_N^0(\d a ,\d x)\d t\\
  = & -\int_0^T \! \int_{\mathcal M_N} g_k(t,\omega) f(t,\omega)\, \mu_N^0(\d\omega) \d t.
  \endaligned$$
This is the desired equality \eqref{eq-5}. Moreover, thanks to the fact $\div_{\mu_N^0}(\sigma_k\cdot \nabla \omega)=0$ proved in Lemma \ref{lem-gradient}, we conclude the existence of $\big\<\sigma_k \cdot\nabla \omega, D_\omega \rho^N_t \big\>$ in the distributional sense, and
  \begin{equation}\label{eq-7.5}
  \big\<\sigma_k \cdot\nabla \omega, D_\omega \rho^N_t \big\> = g_k(t,\omega),\quad k\in \{1,\ldots, N \}.
  \end{equation}
Combining this equality with \eqref{eq-7} yields the gradient estimate below:
  \begin{equation}\label{gradient-estimate-9}
  \sum_{k=1}^N \int_0^T \! \int_{\mathcal M_N} \big\<\sigma_k\cdot \nabla \omega, D_\omega \rho^N_t \big\>^2 \,\mu_N^0(\d\omega)\d t \leq \|\rho_0 \|_\infty^2.
  \end{equation}

\textbf{Step 3: Letting $N\to \infty$.} Now suppose that we are given $\rho_0 \in C_b\big( H^{-1-}(\T^2) , \R_+\big)$ such that $\int \rho_0 \,\d\mu=1$, where $\mu$ is the law of the white noise. Let $\rho_t$ be given in Theorem \ref{thm-main-result}. Our purpose in this step is to prove the gradient estimate \eqref{thm-main-result-2.2} on $\rho_t$.

For any $N\in \N$, consider the restriction $\rho^N_0$ of $\rho_0$ to $\mathcal M_N\ (\subset H^{-1-})$ and the stochastic point vortex dynamics starting from $C_N \rho^N_0(\omega) \mu_N^0(\d\omega)$, where $C_N$ is the normalizing constant: $C_N = \big( \int \rho^N_0 \,\d \mu_N^0\big)^{-1}$. Since $\mu_N^0$ converges weakly to $\mu$, we have $\lim_{N\to \infty} C_N =1$. By \eqref{gradient-estimate-9}, we know that the density $\rho^N_t$ of the stochastic point vortices $\omega^N_t$ satisfies the gradient estimate
  \begin{equation}\label{gradient-estimate-10}
  \sum_{k=1}^N \int_0^T \! \int_{\mathcal M_N} \big\<\sigma_k\cdot \nabla \omega, D_\omega \rho^N_t \big\>^2 \,\mu_N^0(\d\omega)\d t \leq C_N^2 \big\|\rho^N_0 \big\|_\infty^2 \leq C_N^2 \|\rho_0 \|_\infty^2,\quad N\in \N.
  \end{equation}

For every $k>N$, we define $\big\<\sigma_k\cdot \nabla \omega, D_\omega \rho^N_t \big\> =0$ for all $(t,\omega) \in [0,T] \times \mathcal M_N$. We want to show that the family $\big\{ \big\<\sigma_k\cdot \nabla \omega, D_\omega \rho^N_t \big\> \,|\, (k,t, \omega) \in \N\times [0,T] \times \mathcal M_N \big\}_{N\in \N}$ has a subsequence which converges in a certain sense to some $G\in L^2 \big(\N\times [0,T] \times H^{-1-}, \# \otimes \d t \otimes \mu\big)$, where $\#$ is the counting measure on $\N$. To this end, we denote by
  $$\aligned
  \mathcal{FC}_{P,T}(\N)= \big\{f: \N\times [0,T] \times H^{-1-} \to \R \, \big| \, &\exists \, n_f \in \N \mbox{ s.t. } f_k\in \mathcal{FC}_{P,T} \mbox{ for } k\leq n_f,\\
  & \mbox{and } f_k\equiv 0  \mbox{ for } k> n_f \big\}.
  \endaligned$$
It is a dense linear subspace of $L^2 \big(\N\times [0,T] \times H^{-1-}, \# \otimes \d t \otimes \mu\big)$. Fix an $f\in \mathcal{FC}_{P,T}(\N)$, by the definition of $\big\<\sigma_k\cdot \nabla \omega, D_\omega \rho^N_t \big\>$ (cf. \eqref{eq-5} and \eqref{eq-7.5}), we have, for all $N > n_f$,
  \begin{equation}\label{eq-13}
  \aligned &\sum_{k=1}^\infty \int_0^T \!\int_{\mathcal M_N} \big\<\sigma_k\cdot \nabla \omega, D_\omega \rho^N_t \big\> f_k(t,\omega)\,\mu_N^0(\d\omega) \d t\\
  = & - \sum_{k=1}^\infty \int_0^T \!\int_{\mathcal M_N} \rho^N_t(\omega) \big\<\sigma_k\cdot \nabla \omega, D_\omega f_k(t,\omega)\big\>\,\mu_N^0(\d\omega) \d t.
  \endaligned
  \end{equation}
Note that the sums over $k\in \N$ on both sides are indeed finite sums. To proceed further, we need some preparations.

\begin{lemma}\label{lem-weak-convergence}
\begin{itemize}
\item[\rm (1)] Let $\{N_i\}_{i\in \N}$ be the subsequence obtained before Lemma \ref{lem-absolute-continuity}; then for any $F\in \mathcal{FC}_{P,T}$,
  $$\lim_{i\to \infty} \int_0^T \!\int_{\mathcal M_{N_i}} \rho^{N_i}_t(\omega) F(t, \omega) \,\mu_{N_i}^0(\d\omega) \d t= \int_0^T \!\int_{H^{-1-}} \rho_t(\omega) F(t, \omega) \,\mu(\d\omega) \d t.$$
\item[\rm (2)] For any  $G\in \mathcal{FC}_{P}$,
  $$\lim_{N\to \infty} \int_{\mathcal M_N} G(\omega) \,\mu_N^0(\d\omega) =  \int_{H^{-1-}} G(\omega) \,\mu(\d\omega).$$
\end{itemize}
\end{lemma}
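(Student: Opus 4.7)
The plan is a truncation-plus-uniform-integrability argument based on the weak convergence $\mu_N^0 \Rightarrow \mu$ on $H^{-1-}(\T^2)$ from Proposition \ref{prop-weak-convergence} and the a.s.\ convergence of the Skorokhod representatives $\hat\omega^{N_i}_t \to \hat\omega_t$ in $H^{-1-}(\T^2)$ built just before Lemma \ref{lem-absolute-continuity}.

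For assertion (2), by linearity I reduce to $G(\omega)=g(\<\omega,\phi_1\>,\ldots,\<\omega,\phi_n\>)$ with $g \in C_P^\infty(\R^n)$. I pick a smooth cut-off $\chi_R \in C_c^\infty(\R^n)$ equal to $1$ on $\{|y|\leq R\}$ and $0$ outside $\{|y|\leq R+1\}$, and set $G_R(\omega) = g(\<\omega,\Phi\>)\chi_R(\<\omega,\Phi\>)$. Each map $\omega \mapsto \<\omega,\phi_j\>$ is continuous on $H^{-1-}(\T^2)$ because $\phi_j$ is smooth, so $G_R$ is bounded and continuous; thus Proposition \ref{prop-weak-convergence} gives $\int G_R\, \d\mu_N^0 \to \int G_R\, \d\mu$. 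For the tail $G-G_R$, I use Cauchy--Schwarz and Chebyshev:
\[
\Big|\int (G-G_R)\,\d\nu\Big| \leq \Big(\int |g(\<\omega,\Phi\>)|^2\,\d\nu\Big)^{1/2}\,\nu\bigl(|\<\omega,\Phi\>|>R\bigr)^{1/2},
\]
which tends to $0$ as $R\to\infty$ uniformly in $\nu \in \{\mu\}\cup\{\mu_N^0\}_{N\geq 1}$: the moments of $\<\omega,\phi_j\>$ are bounded by powers of $\|\phi_j\|_\infty$ under $\mu_N^0$ by Lemma \ref{2-lem-integrability}, and by powers of $\|\phi_j\|_{L^2}$ under $\mu$ by Gaussianity. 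Letting $N\to\infty$ first and $R\to\infty$ afterwards concludes (2).

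For assertion (1), by linearity I reduce to $F(t,\omega)=g(t)f(\omega)$ with $g\in C^1([0,T])$ and $f \in \mathcal{FC}_P$. Since $\hat\omega^{N_i}_t \to \hat\omega_t$ a.s.\ in $H^{-1-}(\T^2)$ for every $t$, and since the laws of $\hat\omega^{N_i}_t$, $\hat\omega_t$ coincide with $\rho^{N_i}_t\mu_{N_i}^0$ and $\rho_t\mu$ respectively, one obtains the weak convergence $\rho^{N_i}_t\mu_{N_i}^0 \Rightarrow \rho_t\mu$ on $H^{-1-}(\T^2)$ for every $t$. The same truncation argument as in (2) now yields pointwise convergence in $t$ of $\int f\,\rho^{N_i}_t\,\d\mu_{N_i}^0 \to \int f\,\rho_t\,\d\mu$: the tail in the Cauchy--Schwarz estimate above is now controlled uniformly via
\[
\int |\<\omega,\phi_j\>|^q\,\rho^{N_i}_t\,\d\mu_{N_i}^0 \leq C_{\rho_0,q,\phi_j} \quad \text{and} \quad \int |\<\omega,\phi_j\>|^q\,\rho_t\,\d\mu \leq \|\rho_0\|_\infty\,\E_\mu|\<\omega,\phi_j\>|^q,
\]
thanks to Corollary \ref{cor-moment-estimate} and Lemma \ref{lem-absolute-continuity}. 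Finally, the same Corollary provides the uniform bound $\big|g(t)\int f\,\d(\rho^{N_i}_t\mu_{N_i}^0)\big|\leq \|g\|_\infty C_{\rho_0,f}$, so dominated convergence on $[0,T]$ finishes (1).

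The main technical point is the uniform-in-$N$ integrability of polynomial-growth functionals against $\rho^{N_i}_t \mu_{N_i}^0$ and $\rho_t \mu$: weak convergence alone does not handle elements of $\mathcal{FC}_P$ since $g$ is merely of polynomial growth, and one must combine it with uniform moment control of the linear functionals $\<\omega,\phi_j\>$. Fortunately the required moment bounds are precisely what Lemma \ref{2-lem-integrability}, Corollary \ref{cor-moment-estimate}, and the standard Gaussian estimates under $\mu$ provide.
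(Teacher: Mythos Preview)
Your proof is correct and follows essentially the same route as the paper: both reduce by linearity to a product form $F(t,\omega)=g(t)\,G(\omega)$, handle bounded $G$ by weak (or a.s.) convergence, and pass to general $G\in\mathcal{FC}_P$ by truncation combined with uniform moment bounds on $\langle\omega,\phi_j\rangle$ coming from Lemma~\ref{2-lem-integrability}, Corollary~\ref{cor-moment-estimate}, and Gaussianity under~$\mu$. The only cosmetic difference is that the paper phrases the uniform-integrability step as $L^2$-boundedness of $G(\hat\omega^{N_i}_t)$ on $[0,T]\times\hat\Theta$, whereas you spell it out via Cauchy--Schwarz and Chebyshev on the tail.
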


\begin{proof}
(1) It suffices to prove the limit for $F(t, \omega)= f(t)G(\omega)$, where $f\in C([0,T])$ and $G(\omega)= g(\<\omega, \phi_1\>, \ldots, \<\omega, \phi_n\>) \in \mathcal{FC}_P$. We have
  $$\int_0^T \!\int_{\mathcal M_{N_i}} \rho^{N_i}_t(\omega) F(t,\omega) \,\mu_{N_i}^0(\d\omega) \d t = \int_0^T f(t) \hat\E \big[ G\big( \hat\omega^{N_i}_t \big)\big] \,\d t,$$
where $\hat\E$ is the expectation on the probability space $\big(\hat\Theta, \hat{\mathcal F},\hat\P\big)$, which comes from the Skorokhod's representation theorem in Section 3. If $G$ is bounded, by \eqref{lem-absolute-continuity.1} and the dominated convergence theorem, we see that the limit holds true. Using the method of truncation, it is sufficient to show that $\big\{G\big( \hat\omega^{N_i}_t \big) \big\}_{i\in \N}$ is bounded in $L^2\big([0,T]\times \hat\Theta \big)$. By Lemma \ref{lem-law-point-vortices},
  $$\hat\E\big[ G^2 \big( \hat\omega^{N_i}_t \big)\big]= \int_{\mathcal M_{N_i}} G^2(\omega) \rho^{N_i}_t(\omega) \,\mu_{N_i}^0(\d\omega) \leq C_{N_i} \|\rho_0\|_\infty  \int_{\mathcal M_{N_i}} G^2(\omega) \,\mu_{N_i}^0(\d\omega).$$
Note that $G(\omega)= g(\<\omega, \phi_1\>, \ldots, \<\omega, \phi_n\>)$ and $g$ has polynomial growth. Combining this fact with the definition of $\mu_{N}^0$ in Section 2.1, some simple calculations lead to the desired result.

(2) The proof is similar as above; the only difference is that we replace the limit \eqref{lem-absolute-continuity.1} by the weak convergence of $\mu_N^0$ to $\mu$ proved in Proposition \ref{prop-weak-convergence}.
\end{proof}

By Remark \ref{1-rem}(1), we have $\sum_{k=1}^\infty \big\<\sigma_k\cdot \nabla \omega, D_\omega f_k(t,\omega)\big\> = \sum_{k=1}^{n_f} \big\<\sigma_k\cdot \nabla \omega, D_\omega f_k(t,\omega)\big\>\in \mathcal{FC}_{P,T}$. The first assertion of Lemma \ref{lem-weak-convergence} leads to
  \begin{equation}\label{eq-14} \aligned
  \lim_{i\to \infty} &\sum_{k=1}^\infty \int_0^T \!\int_{\mathcal M_{N_i}} \rho^{N_i}_t(\omega) \big\<\sigma_k\cdot \nabla \omega, D_\omega f_k(t,\omega)\big\>\,\mu_{N_i}^0(\d\omega) \d t\\
  = &\sum_{k=1}^\infty \int_0^T \!\int_{H^{-1-}} \rho_t(\omega) \big\<\sigma_k\cdot \nabla \omega, D_\omega f_k(t,\omega) \big\>\, \mu(\d\omega) \d t.
  \endaligned
  \end{equation}
From \eqref{eq-13} and \eqref{eq-14} we see that, for all $f\in \mathcal{FC}_{P,T}(\N)$, the limit
  $$\lim_{i\to \infty} \sum_{k=1}^\infty \int_0^T \!\int_{\mathcal M_{N_i}} \big\<\sigma_k\cdot \nabla \omega, D_\omega \rho^{N_i}_t (\omega) \big\> f_k(t,\omega) \,\mu_{N_i}^0(\d\omega) \d t$$
exists, and denoting it by $L(f)$, we have
  \begin{equation}\label{eq-15}
  L(f)= -\sum_{k=1}^\infty \int_0^T \!\int_{H^{-1-}} \rho_t(\omega) \big\<\sigma_k\cdot \nabla \omega, D_\omega f_k(t,\omega) \big\>\, \mu(\d\omega) \d t.
  \end{equation}
The above equality clearly implies that $L$ is an linear functional on $\mathcal{FC}_{P,T}(\N)$, which is a dense subspace of $L^2 \big(\N\times [0,T] \times H^{-1-}, \# \otimes \d t \otimes \mu\big)$. Moreover, by Cauchy's inequality,
  $$\aligned |L(f)| &= \lim_{i\to \infty} \bigg| \sum_{k=1}^\infty \int_0^T \!\int_{\mathcal M_{N_i}} \big\<\sigma_k\cdot \nabla \omega, D_\omega \rho^{N_i}_t (\omega) \big\> f_k(t,\omega) \,\mu_{N_i}^0(\d\omega) \d t \bigg| \\
  &\leq \liminf_{i\to \infty} \big\|\big\<\sigma_k\cdot \nabla \omega, D_\omega \rho^{N_i}_t \big\> \big\|_{L^2(\N\times [0,T] \times \mathcal M_{N_i})}\, \|f\|_{L^2(\N\times [0,T] \times \mathcal M_{N_i})} \\
  &\leq \|\rho_0\|_\infty \|f\|_{L^2(\N\times [0,T] \times H^{-1-})},
  \endaligned$$
where in the last step we have used \eqref{gradient-estimate-10} and the second assertion of Lemma \ref{lem-weak-convergence}, by regarding $\sum_{k=1}^\infty f_k^2(t,\omega) = \sum_{k=1}^{n_f} f_k^2(t,\omega)\in \mathcal{FC}_{P,T}$ as a test function.

Summarizing the above arguments, we see that $L: \mathcal{FC}_{P,T}(\N) \to \R$ is a bounded linear functional, and thus it can be extended to the whole space $L^2 \big(\N\times [0,T] \times H^{-1-}, \# \otimes \d t \otimes \mu\big)$ as a bounded linear functional with the norm $\|L\| \leq \|\rho_0\|_\infty$. By Riesz's representation theorem, there exists $G\in L^2 \big(\N\times [0,T] \times H^{-1-}, \# \otimes \d t \otimes \mu\big)$ such that
  \begin{equation}\label{eq-16}
  \|G\|_{L^2(\N\times [0,T] \times H^{-1-})} = \|L\| \leq \|\rho_0\|_\infty,
  \end{equation}
and for all $f\in L^2 \big(\N\times [0,T] \times H^{-1-}, \# \otimes \d t \otimes \mu\big)$,
  $$L(f)= \sum_{k=1}^\infty \int_0^T \!\int_{H^{-1-}} G_k(t,\omega) f_k(t,\omega) \, \mu(\d\omega) \d t.$$
In particular, for every fixed $k\in \N$, taking $f\in \mathcal{FC}_{P,T}(\N)$ such that $f_j\equiv 0$ for all $j\neq k$, we have by \eqref{eq-15} that
  \begin{equation}\label{eq-17}
  \int_0^T \!\int_{H^{-1-}} G_k(t,\omega) f_k(t,\omega) \, \mu(\d\omega) \d t = - \int_0^T \!\int_{H^{-1-}} \rho_t(\omega) \big\<\sigma_k\cdot \nabla \omega, D_\omega f_k(t,\omega) \big\>\, \mu(\d\omega) \d t.
  \end{equation}
We need the final preparation.

\begin{lemma}\label{lem-divergence}
For all $k\in \N$, $\div_\mu (\sigma_k\cdot \nabla\omega)=0$ in the distributional sense, i.e. for all $G\in \mathcal{FC}_P$,
  $$\int_{H^{-1-}} \<\sigma_k\cdot \nabla\omega, D_\omega G\>\,\mu(\d\omega)=0.$$
\end{lemma}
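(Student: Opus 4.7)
The plan is to reduce the statement to the finite-dimensional divergence identity already established in Lemma~\ref{lem-gradient} and then pass to the limit using the weak convergence of $\mu_N^0$ to $\mu$ encoded in Lemma~\ref{lem-weak-convergence}(2). The key observation is that, for a cylinder functional $G\in \mathcal{FC}_P$ of the form $G(\omega)= g(\<\omega,\phi_1\>,\ldots,\<\omega,\phi_n\>)$, the integrand appearing in the divergence identity is again a cylinder functional. Indeed, as computed in Remark~\ref{1-rem}(1),
$$H(\omega):= \<\sigma_k\cdot \nabla\omega,\, D_\omega G\>= -\sum_{j=1}^n \partial_j g(\<\omega,\phi_1\>,\ldots,\<\omega,\phi_n\>)\, \<\omega,\sigma_k\cdot \nabla \phi_j\>.$$
Writing $H(\omega)= \tilde g\bigl(\<\omega, \phi_1\>,\ldots, \<\omega, \phi_n\>, \<\omega, \sigma_k\cdot \nabla \phi_1\>,\ldots,\<\omega, \sigma_k\cdot \nabla \phi_n\>\bigr)$ with $\tilde g(a,b)= -\sum_j \partial_j g(a)\, b_j$, one sees that $\tilde g\in C_P^\infty(\R^{2n})$ because all partial derivatives of $g$ have polynomial growth, and the functions $\sigma_k\cdot \nabla \phi_j$ are smooth by \textbf{(H1)}; hence $H\in \mathcal{FC}_P$.

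With this in hand, the proof proceeds in two steps. First, the second assertion of Lemma~\ref{lem-gradient} (specifically equation~\eqref{eq-divergence}) gives, for every $N\in \N$, the finite-dimensional divergence identity
$$\int_{\mathcal M_N} H(\omega)\, \mu_N^0(\d\omega)=0.$$
Then, applying Lemma~\ref{lem-weak-convergence}(2) to the cylinder functional $H\in \mathcal{FC}_P$, one passes to the limit to obtain
$$\int_{H^{-1-}} H(\omega)\, \mu(\d\omega)= \lim_{N\to \infty} \int_{\mathcal M_N} H(\omega)\, \mu_N^0(\d\omega) = 0,$$
which is exactly the claim that $\div_\mu (\sigma_k\cdot \nabla \omega)=0$ in the distributional sense.

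I expect no serious obstacle with this route; the only point requiring attention is the verification that $H\in \mathcal{FC}_P$, which is routine as sketched above. A self-contained alternative, should one prefer not to route through the point-vortex approximation at this step, is to argue directly via Gaussian integration by parts under $\mu$: the random vector $(\<\omega,\phi_j\>,\<\omega,\sigma_k\cdot \nabla \phi_j\>)_{j=1}^n$ is jointly centered Gaussian, Stein's lemma rewrites $\E_\mu[\partial_j g(\xi)\, \eta_j]$ as $\sum_l M_{jl}\, \E_\mu[\partial_{j,l} g(\xi)]$ with $M_{jl}= \<\sigma_k\cdot \nabla \phi_j,\phi_l\>_{L^2(\T^2)}$, and the divergence-free condition $\div(\sigma_k)=0$ together with integration by parts on $\T^2$ makes $M$ antisymmetric; pairing against the symmetric Hessian of $g$ then kills the sum. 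Either argument yields the conclusion.
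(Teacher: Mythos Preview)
Your primary argument is correct and coincides exactly with the paper's own proof: observe that $H=\langle \sigma_k\cdot\nabla\omega, D_\omega G\rangle\in\mathcal{FC}_P$ (the paper cites Remark~\ref{1-rem}(1) for this), invoke the finite-$N$ divergence identity~\eqref{eq-divergence} from Lemma~\ref{lem-gradient}, and pass to the limit via Lemma~\ref{lem-weak-convergence}(2). Your alternative direct route through Gaussian integration by parts (Stein's lemma and the antisymmetry of $M_{jl}=\langle\sigma_k\cdot\nabla\phi_j,\phi_l\rangle_{L^2(\T^2)}$) is also valid and has the advantage of being self-contained, not requiring the point-vortex approximation at this step.
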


\begin{proof}
By (1) of Remark \ref{1-rem}, we have $\<\sigma_k\cdot \nabla\omega, D_\omega G\> \in \mathcal{FC}_P$. The desired result follows from \eqref{eq-divergence} and the second assertion of Lemma \ref{lem-weak-convergence}.
\end{proof}

Combining Lemma \ref{lem-divergence} and \eqref{eq-17}, we see that $\big\<\sigma_k\cdot \nabla \omega, D_\omega \rho_t(\omega) \big\>$ exists in the distributional sense and
  $$\big\<\sigma_k\cdot \nabla \omega, D_\omega \rho_t(\omega) \big\> = G_k(t,\omega),\quad k\in \N.$$
Substituting this equality into \eqref{eq-16} eventually leads to the gradient estimate
  $$\sum_{k=1}^\infty \int_0^T \! \int_{H^{-1-}} \big\<\sigma_k\cdot \nabla \omega, D_\omega \rho_t(\omega) \big\>^2 \,\mu(\d\omega) \d t \leq \|\rho_0 \|_\infty^2.$$

\section{$L^2$-integrability of $\<b(\omega), D_\omega \rho_t\>$}

Our purpose is to prove Theorem \ref{thm-integrability} for which we need some preparations. Recall that $\big\<\sigma_k \cdot \nabla \omega, D_\omega \rho_t \big\>$ is characterized by the following identity: for all $F\in \mathcal{FC}_{P,T}$,
  $$\int_0^T \!\! \int_{H^{-1-}}\big\<\sigma_k \cdot \nabla \omega, D_\omega \rho_t \big\> F(t,\omega) \, \d\mu \d t = - \int_0^T \!\! \int_{H^{-1-}} \rho_t(\omega) \big\<\sigma_k\cdot \nabla \omega, D_\omega F(t,\omega) \big\>\, \d\mu \d t.$$
Taking $F(t,\omega)= f(t) G(\omega)$ with $f\in C([0,T])$ and $G\in \mathcal{FC}_{P}$, we deduce that
  $$\int_{H^{-1-}}\big\<\sigma_k \cdot \nabla \omega, D_\omega \rho_t \big\> G(\omega) \, \d\mu = - \int_{H^{-1-}} \rho_t(\omega) \big\<\sigma_k\cdot \nabla \omega, D_\omega G \big\>\, \d\mu \quad \mbox{for a.e. } t\in [0,T].$$
Now for $G(\omega)= \big\<\omega, {\rm e}^{2\pi {\rm i} k \cdot x} \big\> \in \mathcal{FC}_P$, we have $D_\omega G= {\rm e}^{2\pi {\rm i} k \cdot x}$. By the above equality, for a.e. $t\in [0,T]$,
  \begin{equation}\label{eq-10}
  \aligned
  \int_{H^{-1-}} \big\<\omega, {\rm e}^{2\pi {\rm i} k \cdot x} \big\> \big\<\sigma_k \cdot \nabla \omega, D_\omega \rho_t \big\> \,\d\mu &= - \int_{H^{-1-}} \big\<\sigma_k \cdot \nabla \omega, {\rm e}^{2\pi {\rm i} k \cdot x} \big\> \rho_t(\omega) \,\d\mu \\
  &= \int_{H^{-1-}} \big\<\omega, \sigma_k \cdot \nabla {\rm e}^{2\pi {\rm i} k \cdot x} \big\> \rho_t(\omega) \,\d\mu\\
  &= 0,
  \endaligned
  \end{equation}
since
  $$\sigma_k \cdot \nabla {\rm e}^{2\pi {\rm i} k \cdot x} = {\rm e}^{2\pi {\rm i} k \cdot x} \frac{k^\perp}{|k|^2} \cdot {\rm e}^{2\pi {\rm i} k \cdot x} 2\pi {\rm i} k =0.$$

For simplicity of notations, we denote by
  \begin{equation}\label{eq-rv}
  \xi_k(t, \omega) = \big\<\sigma_k \cdot \nabla \omega, D_\omega \rho_t \big\>,\quad \eta_k(\omega) = \big\<\omega, {\rm e}^{2\pi {\rm i} k \cdot x} \big\>, \quad k\in \Z^2_0= \Z^2 \setminus \{0\}.
  \end{equation}
We summarize the properties of $\xi_k(t)$ and $\eta_k$ for later use.

\begin{lemma}\label{lem-properties}
\begin{itemize}
\item[\rm(i)] $ \sum_{k\in \Z^2_0} \int_0^T \|\xi_k(t)\|_{L^2(\mu)}^2 \,\d t<+\infty$;
\item[\rm(ii)] by \eqref{eq-10}, for any $k\in \Z^2_0$, $\<\xi_k(t), \eta_k\>_{L^2(\mu)} =0$ for a.e. $t\in [0,T]$;
\item[\rm(iii)] under $\mu$, the family $\{\eta_k\}_{k\in \Z^2_0}$ consists of i.i.d. complex-valued standard Gaussian r.v.'s;
\item[\rm(iv)] the family $\big\{\eta_k(\omega)= \big\<\omega, {\rm e}^{2\pi {\rm i} k \cdot x} \big\> \big\}_{k\in \Z^2_0 }$ is an orthonormal basis of
    $$L^2_0(H^{-1-}, \mu)= \big\{F\in L^2(H^{-1-}, \mu): \E_\mu F=0 \big\}. $$
\end{itemize}
\end{lemma}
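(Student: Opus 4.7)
The plan is to prove Lemma \ref{lem-properties} part by part; all four assertions are essentially direct consequences of material already established in the paper, so the proof will be assembled by carefully invoking the appropriate inputs.

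Part (i) is simply a reformulation of the hypothesis of Theorem \ref{thm-integrability}. That hypothesis says the gradient estimate \eqref{thm-main-result-2.2} holds for the specific vector fields \eqref{vector-fields} with $\gamma=2$, and with the notation $\xi_k(t,\omega) = \<\sigma_k \cdot \nabla\omega, D_\omega \rho_t\>$ introduced in \eqref{eq-rv}, this estimate reads precisely $\sum_{k\in\Z^2_0}\int_0^T \|\xi_k(t)\|_{L^2(\mu)}^2\,\d t \leq \|\rho_0\|_\infty^2 <+\infty$. So (i) requires no additional argument.

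For part (ii) the computation is already done in display \eqref{eq-10}; I would just extract it as a self-contained assertion. The mechanism is: $G(\omega)=\eta_k(\omega)$ lies in $\mathcal{FC}_P$ with $D_\omega G = {\rm e}^{2\pi{\rm i}k\cdot x}$, and the distributional characterization of $\<\sigma_k\cdot\nabla\omega, D_\omega\rho_t\>$ (the integration-by-parts relation combined with $\div_\mu(\sigma_k\cdot\nabla\omega)=0$ from Lemma \ref{lem-divergence}) reduces the inner product to an expression in $\<\omega,\sigma_k\cdot\nabla {\rm e}^{2\pi{\rm i}k\cdot x}\>$. The algebraic identity $\sigma_k\cdot\nabla {\rm e}^{2\pi{\rm i}k\cdot x} = {\rm e}^{2\pi{\rm i}k\cdot x}(k^\perp/|k|^2)\cdot(2\pi{\rm i}k\,{\rm e}^{2\pi{\rm i}k\cdot x})=0$ then kills the integrand, since $k^\perp\perp k$.

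Part (iii) follows from extending the defining covariance identity $\E_\mu[\<\omega,\phi\>\<\omega,\psi\>]=\<\phi,\psi\>_{L^2(\T^2)}$ to complex-valued test functions by linearity. Since $\omega$ is a real distribution, $\overline{\<\omega,\phi\>}=\<\omega,\bar\phi\>$, hence $\E_\mu[\eta_k\overline{\eta_l}] = \int_{\T^2}{\rm e}^{2\pi{\rm i}(k-l)\cdot x}\,\d x = \delta_{k,l}$. Because the family $\{\eta_k\}$ consists of linear functionals of a centred Gaussian process, the joint law is Gaussian, and uncorrelation promotes to independence (in the complex sense, bearing in mind the reality constraint $\eta_{-k}=\overline{\eta_k}$, which does not disturb the covariance structure actually needed later). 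For part (iv), orthonormality is exactly the previous computation, and completeness is inherited from the fact that $\{{\rm e}^{2\pi{\rm i}k\cdot x}\}_{k\in\Z^2_0}$ is an orthonormal basis of $L^2_0(\T^2,\d x)$: interpreting $L^2_0(H^{-1-},\mu)$ as the closure of first-chaos elements $\<\omega,f\>$ with $f\in L^2_0(\T^2)$, one expands $f$ in Fourier series and passes to the $L^2(\mu)$-limit via the $L^2$-isometry between test functions and their white-noise integrals.

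Since each part is either an invocation of the hypothesis, an algebraic identity already computed in the text, or a standard Gaussian/Fourier fact, I do not anticipate any serious obstacle. The only subtlety worth a careful sentence is the conjugation relation $\eta_{-k}=\overline{\eta_k}$ in (iii), which makes the family ``complex i.i.d.'' only in the covariance-matrix sense; since the subsequent application in the proof of Theorem \ref{thm-integrability} uses only orthonormality and the second moment $\E|\eta_k|^2=1$, this mild abuse of terminology causes no harm.
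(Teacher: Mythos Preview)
The paper states this lemma as a summary without proof, so there is no proof to compare against; your assertions for (i)--(iii) are correct and match the paper's intended justifications (the hypothesis of Theorem~\ref{thm-integrability}, the computation \eqref{eq-10}, and the covariance structure of white noise, respectively). Your caveat about the conjugation relation $\eta_{-k}=\overline{\eta_k}$ in (iii) is well taken and indeed harmless for the later application.

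There is, however, a genuine gap in your argument for (iv). You write that $L^2_0(H^{-1-},\mu)$ is ``the closure of first-chaos elements $\langle\omega,f\rangle$'', but this is false: $L^2_0(\mu)=\{F\in L^2(\mu):\E_\mu F=0\}$ is the full centered $L^2$-space, which by the Wiener chaos decomposition equals $\bigoplus_{n\ge 1}\mathcal H_n$, strictly larger than the first chaos $\mathcal H_1$. For instance, $|\eta_k|^2-1$ lies in $L^2_0(\mu)$ but is orthogonal to every $\eta_l$. Consequently the family $\{\eta_k\}_{k\in\Z^2_0}$ is an orthonormal basis only of $\mathcal H_1$, not of $L^2_0(\mu)$ as literally stated. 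This is not merely cosmetic: the decomposition \eqref{decomposition} used in the proof of Proposition~\ref{prop-convergence} writes $\xi_k(t)=\sum_l\langle\xi_k(t),\eta_l\rangle\eta_l$, which requires $\xi_k(t)\in\mathcal H_1$, not just $\xi_k(t)\in L^2_0(\mu)$. So either (iv) should be read as asserting an orthonormal basis of the first chaos (in which case your Fourier argument works and one must separately justify $\xi_k(t)\in\mathcal H_1$), or the statement as written is too strong and your proof does not establish it because the completeness step fails.
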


Theorem \ref{thm-integrability} is a consequence of the following result.

\begin{proposition}\label{prop-convergence}
The series $\sum_{k\in \Z^2_0 } \xi_k(t, \omega)\, \eta_k(\omega)$ converges in $L^2([0,T]\times H^{-1-})$.
\end{proposition}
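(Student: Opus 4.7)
The plan is to show that the symmetric partial sums
\[
S_N(t,\omega) := \sum_{k\in \Z^2_0,\ |k|\leq N} \xi_k(t,\omega)\,\eta_k(\omega)
\]
form a Cauchy net in $L^2([0,T]\times H^{-1-},\, dt\otimes \mu)$; the $L^2$-convergence of the full series then follows. The argument hinges on the orthonormal basis structure in Lemma~\ref{lem-properties}(iii)--(iv) combined with the decisive vanishing in~(ii).

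For a.e.\ $t\in [0,T]$, I would expand each $\xi_k(t,\cdot)\in L^2_0(\mu)$ in the basis $\{\eta_j\}_{j\in\Z^2_0}$,
\[
\xi_k(t) = \sum_{j\in\Z^2_0} c_{k,j}(t)\,\eta_j, \qquad \sum_{j}|c_{k,j}(t)|^2 = \|\xi_k(t)\|^2_{L^2(\mu)},
\]
so that Lemma~\ref{lem-properties}(ii) yields the key identity $c_{k,k}(t)=0$. For a finite $A\subset \Z^2_0$, substituting the expansion into $\|S_A(t)\|^2_{L^2(\mu)} = \sum_{k,l\in A}\E_\mu[\xi_k\xi_l\eta_k\eta_l]$ and applying the Isserlis formula
\[
\E_\mu\bigl[\eta_j\eta_{j'}\eta_k\eta_l\bigr] = \delta_{jj'}\delta_{kl} + \delta_{jk}\delta_{j'l} + \delta_{jl}\delta_{j'k}
\]
(from (iii)) produces three Wick pairings. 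The middle pairing assembles $c_{k,k}(t)c_{l,l}(t)$, which vanishes identically by~(ii); the other two collapse to
\[
\|S_A(t)\|^2_{L^2(\mu)} = \sum_{k\in A}\|\xi_k(t)\|^2_{L^2(\mu)} + \sum_{k,l\in A} c_{k,l}(t)\,c_{l,k}(t) \leq 2\sum_{k\in A}\|\xi_k(t)\|^2_{L^2(\mu)},
\]
the final inequality coming from Cauchy--Schwarz applied to the transposed-indices cross-sum (using that $\sum_{l}|c_{l,k}|^2 = \|\xi_l\|^2$ once re-summed).

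Running the identical estimate on the tail difference $S_B - S_A$ for $A\subset B$, integrating in $t$, and invoking Lemma~\ref{lem-properties}(i), one obtains
\[
\|S_B - S_A\|^2_{L^2([0,T]\times H^{-1-})} \leq 2\sum_{k\in B\setminus A}\int_0^T \|\xi_k(t)\|^2_{L^2(\mu)}\,dt \longrightarrow 0
\]
as $A,B$ exhaust $\Z^2_0$, which yields the Cauchy property and hence the claimed convergence. The main subtlety I anticipate is reconciling the complex Fourier notation $\eta_k = \<\omega, e^{2\pi {\rm i} k\cdot x}\>$---which satisfies the reality constraint $\overline{\eta_k}=\eta_{-k}$ and analogously $\overline{\xi_k}=-\xi_{-k}$, so the modes $\pm k$ are entangled---with the i.i.d.\ real Gaussian interpretation that drives the Isserlis formula above. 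The cleanest route is to work throughout with a real Fourier basis (cosines and sines of $2\pi k\cdot x$), under which $\{\eta_k\}$ are honestly independent $N(0,1)$ variables; a direct complex treatment produces extra pairings involving $\delta_{a+b,0}$, but again the identity $c_{k,k}=0$ from~(ii) is precisely what kills the one pairing that would otherwise preclude a uniform bound in $|A|$.
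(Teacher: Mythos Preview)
Your proposal is correct and follows essentially the same route as the paper: show the partial sums are Cauchy by expanding each $\xi_k(t)$ in the basis $\{\eta_j\}$, compute fourth Gaussian moments (the paper's three-case analysis in its Lemma~\ref{5-lem-1} is exactly your three Isserlis pairings), and use $\langle\xi_k,\eta_k\rangle_{L^2(\mu)}=0$ from Lemma~\ref{lem-properties}(ii) to kill the one pairing that would otherwise grow with $|A|$. The only organizational difference is that the paper inserts an auxiliary finite truncation $\xi_k=\sum_{|l|\leq n}c_{k,l}\eta_l+\xi_{k,n}$ before computing moments (handling the remainder $\xi_{k,n}$ separately via independence of $\eta_k$ and $\xi_{k,n}$ for $n\geq M$, then sending $n\to\infty$), whereas you substitute the full expansion directly---which is legitimate here because property~(iv) forces each $\xi_k(t)$ to lie in the first Wiener chaos, so all the products involved have finite moments.
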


\begin{proof}
Denote by
  $$J_N(t,\omega):= \sum_{0<|k|\leq N} \xi_k(t,\omega) \, \eta_k(\omega),\quad N\in \N.$$
It is sufficient to show that $\{J_N\}_{N\in \N}$ is a Cauchy sequence in  $L^2([0,T]\times H^{-1-})$.

By (i) and (iv) in Lemma \ref{lem-properties}, we have the orthogonal decomposition: for any $k\in \Z^2_0 $,
  \begin{equation}\label{decomposition}
  \xi_k(t) = \sum_{l\in \Z^2_0} \<\xi_k(t) , \eta_l \>_{L^2(\mu)} \, \eta_l = \sum_{0< |l|\leq n} \<\xi_k(t) , \eta_l \>_{L^2(\mu)} \, \eta_l + \xi_{k,n}(t),
  \end{equation}
where $n\in \N$ is any fixed integer and $\xi_{k,n}(t)$ denotes the remainder part.  For $N, M\in \N, N<M$, we have
  \begin{equation}\label{convergence-1}
  \aligned
  J_M- J_N &=\sum_{N< |k|\leq M} \xi_k(t)\, \eta_k = \sum_{N< |k|\leq M} \eta_k \bigg(\sum_{0< |l|\leq n} \<\xi_k(t) , \eta_l \>_{L^2(\mu)} \, \eta_l + \xi_{k,n}(t) \bigg)\\
  &= \sum_{N< |k|\leq M} \sum_{0< |l|\leq n} \<\xi_k(t) , \eta_l \>_{L^2(\mu)} \, \eta_k \, \eta_l + \sum_{N< |k|\leq M} \eta_k \, \xi_{k,n}(t) \\
  &=: I_1(n) + I_2(n).
  \endaligned
  \end{equation}

\begin{lemma}\label{5-lem-1}
We have
  $$ \sup_{n\geq M} \int_0^T \|I_1(n) \|_{L^2(\mu)}^2 \,\d t \leq 2 \sum_{N< |k|\leq M} \int_0^T \|\xi_k(t)\|_{L^2(\mu)}^2 \,\d t.$$
\end{lemma}

\begin{proof}
Fix any $n\geq M$. Note that
  $$|I_1(n)|^2= \sum_{N< |k|,|k'|\leq M} \sum_{0< |l|, |l'|\leq n} \<\xi_k(t) , \eta_l \>_{L^2(\mu)} \overline{\<\xi_{k'}(t) , \eta_{l'} \>_{L^2(\mu)}} \, \eta_k\, \eta_l \, \overline{\eta_{k'}}\, \overline{\eta_{l'}}. $$
In view of property (ii) in Lemma \ref{lem-properties}, the terms with $k=l$ or $k' =l'$ vanish. Recall that $\{\eta_k\}_{k\in \Z^2_0}$ is a family of i.i.d. standard Gaussian r.v.'s. Similar to the proof of \cite[Lemma 23, Step 3]{Flandoli}, we consider the following cases:

(1) $k=k'\neq l = l'$. The sum of these finite terms is denoted by $\hat J$, then
  \begin{equation*}
  \aligned
  \int_0^T \E_\mu \big(\hat J \big)\,\d t &= \sum_{N< |k|\leq M} \sum_{0< |l|\leq n,\, l\neq k} \int_0^T \big|\<\xi_k(t) , \eta_l \>_{L^2(\mu)} \big|^2\, \E_\mu \big(|\eta_k|^2\, |\eta_l|^2\big) \,\d t \\
  &= \sum_{N< |k|\leq M} \sum_{0< |l|\leq n,\, l\neq k} \int_0^T \big|\<\xi_k(t) , \eta_l \>_{L^2(\mu)} \big|^2 \, \E_\mu \big(|\eta_k|^2 \big) \E_\mu \big(|\eta_l|^2 \big) \,\d t \\
  &\leq \sum_{N< |k|\leq M} \int_0^T \|\xi_k(t)\|_{L^2(\mu)}^2 \,\d t,
  \endaligned
  \end{equation*}
where in the third equality we used $\E_\mu \big(|\eta_k|^2 \big) =1$ and \eqref{decomposition}. \medskip

(2) $k=l \neq k' = l'$. In this case, by property (ii) in Lemma \ref{lem-properties}, $\<\xi_k(t) , \eta_l \>_{L^2(\mu)} =\<\xi_{k'}(t) , \eta_{l'} \>_{L^2(\mu)} =0$ for a.e. $t\in [0,T]$.

The case $k=l = k' = l'$ is treated analogously. \medskip

(3) $k=l' \neq k' = l$. The sum of these terms is denoted by $\tilde J$. We have
  \begin{equation*}
  \aligned
  \int_0^T \E_\mu \big(\tilde J \big)\,\d t &= \sum_{N< |k|, |k'|\leq M} \int_0^T \<\xi_k(t) , \eta_{k'} \>_{L^2(\mu)} \overline{ \<\xi_{k'}(t) , \eta_k \>_{L^2(\mu)}}\, \E_\mu \big(|\eta_k|^2\, |\eta_{k'}|^2\big) \,\d t \\
  &= \sum_{N< |k|, |k'|\leq M}  \int_0^T \<\xi_k(t) , \eta_{k'} \>_{L^2(\mu)} \overline{\<\xi_{k'}(t) , \eta_k \>_{L^2(\mu)}}\, \,\d t.
  \endaligned
  \end{equation*}
Changing $k$ to $k'$ and $k'$ to $k$, we see that the r.h.s. is real. By Cauchy's inequality,
  \begin{equation*}
  \aligned
  \int_0^T \E_\mu \big(\tilde J \big)\,\d t &\leq \bigg[\sum_{N< |k|, |k'|\leq M} \int_0^T \! \big|\<\xi_k(t) , \eta_{k'} \>_{L^2(\mu)} \big|^2 \d t \bigg]^{\frac12} \bigg[\sum_{N< |k|, |k'|\leq M} \int_0^T \! \big| \<\xi_{k'}(t) , \eta_k \>_{L^2(\mu)} \big|^2 \d t \bigg]^{\frac12} \\
  &\leq \bigg[\sum_{N< |k|\leq M} \int_0^T \|\xi_k(t)\|_{L^2(\mu)}^2 \,\d t \bigg]^{\frac12} \bigg[\sum_{N< |k'|\leq M} \int_0^T \|\xi_{k'}(t)\|_{L^2(\mu)}^2 \,\d t \bigg]^{\frac12} \\
  &= \sum_{N< |k|\leq M} \int_0^T \|\xi_k(t)\|_{L^2(\mu)}^2 \,\d t.
  \endaligned
  \end{equation*}

Summarizing the above arguments yields the desired estimate.
\end{proof}

Next we consider the quantity $I_2(n)$.

\begin{lemma}\label{lem-2}
For fixed $M >N$, we have
  $$\lim_{n\to \infty} \int_0^T \|I_2(n)\|_{L^2(\mu)}^2 \,\d t =0.$$
\end{lemma}

\begin{proof}
There exists $C= C(M, N)>0$ such that
  $$|I_2(n)|^2 \leq C \sum_{N< |k|\leq M} |\eta_k|^2 \, |\xi_{k,n}(t)|^2.$$
First, we show that each term on the r.h.s. belongs to $L^1(H^{-1-}, \mu)$. Since $n>M$, by property (iii) in Lemma \ref{lem-properties} and the definition \eqref{decomposition} of $\xi_{k,n}(t)$, we see that $\eta_k$ and $\xi_{k,n}(t)$ are independent. Therefore, for any $f,g\in C_b(\R)$,
  $$\E_\mu \big[ f\big(|\eta_k|^2 \big) g\big(|\xi_{k,n}(t)|^2 \big) \big]= \E_\mu \big[ f\big(|\eta_k|^2 \big) \big] \E_\mu \big[ g\big(|\xi_{k,n}(t)|^2 \big) \big].$$
Taking $f(x)=g(x)= x\wedge R,\, x\in [0,\infty), R>0$, and by the monotone convergence theorem, we obtain
  $$\E_\mu \big[|\eta_k|^2|\xi_{k,n}(t)|^2 \big]= \E_\mu \big[|\eta_k|^2\big] \E_\mu \big[|\xi_{k,n}(t)|^2 \big]= \E_\mu \big[|\xi_{k,n}(t)|^2 \big] = \|\xi_{k,n}(t)\|_{L^2(\mu)}^2.$$
Therefore, all the terms are integrable. As a result,
  $$ \int_0^T \|I_2(n)\|_{L^2(\mu)}^2\,\d t \leq C \sum_{N< |k|\leq M} \int_0^T \|\xi_{k,n}(t)\|_{L^2(\mu)}^2 \,\d t. $$
For every $k\in \Z^2\setminus \{0\}$, since $\xi_{k,n}(t) $ is the remainder in the decomposition \eqref{decomposition}, we have
  $$\lim_{n\to \infty} \int_0^T \|\xi_{k,n}(t)\|_{L^2(\mu)}^2 \,\d t =0.$$
This implies the desired limit.
\end{proof}

Combining \eqref{convergence-1} and Lemma \ref{5-lem-1}, we have
  $$\aligned
  \int_0^T \|J_M -J_N\|_{L^2(\mu)}^2 \,\d t & \leq  2 \int_0^T \|I_1(n)\|_{L^2(\mu)}^2 \,\d t + 2 \int_0^T  \|I_2(n)\|_{L^2(\mu)}^2 \,\d t \\
  & \leq 4 \sum_{N< |k|\leq M} \int_0^T \|\xi_k(t)\|_{L^2}^2 \,\d t  + 2\int_0^T \|I_2(n)\|_{L^2(\mu)}^2\,\d t .
  \endaligned$$
By Lemma \ref{lem-2} and property (i) of Lemma \ref{lem-properties}, first letting $n\to \infty$ and then $N,M \to \infty$, we see that $\{J_N\}_{N\in \N}$ is a Cauchy sequence in $L^2([0,T]\times H^{-1-})$. The proof of Proposition \ref{prop-convergence} is complete.
\end{proof}

\medskip

\noindent \textbf{Acknowledgement.} The second author is grateful to the financial supports of the National Natural Science Foundation of China (Nos. 11431014, 11571347), the Seven Main Directions (Y129161ZZ1) and the Special Talent Program of the Academy of Mathematics and Systems Science, Chinese Academy of Sciences.

\end{document}